\documentclass[reqno,a4paper,11pt]{amsart}
\usepackage[left=2cm,right=2cm,top=2cm,bottom=2cm,bindingoffset=0cm]{geometry}
\usepackage{amsmath}
\usepackage{amssymb}
\usepackage{amsthm}
\usepackage{amsbsy}
\usepackage{enumitem}
\usepackage{hyperref}
\usepackage[foot]{amsaddr}

\numberwithin{equation}{section}

\newcommand{\R}{\mathbb{R}}
\newcommand{\N}{\mathbb{N}}

\renewcommand{\div}{\operatorname{div}}
\newcommand{\tr}{\operatorname{tr}}

\newcommand{\wto}{\rightharpoonup}
\newcommand{\mc}{\mathcal}
\renewcommand{\to}{\rightarrow}
\renewcommand{\d}{\,\mathrm{d}}
      
\newcommand{\norm}[1]{\left\lVert#1\right\rVert}  
\newcommand{\scal}[2]{\langle #1,#2\rangle}
\DeclareMathOperator*{\supp}{supp}

\newtheorem{teo}{Theorem}[section]
\newtheorem{defin}[teo]{Definition}
\newtheorem{rem}[teo]{Remark}
\newtheorem{prop}[teo]{Proposition}
\newtheorem{lemma}[teo]{Lemma}
\newtheorem{cor}[teo]{Corollary}

\begin{document}
	
	\author[G. Lazzaroni]{Giuliano Lazzaroni}\author[R. Molinarolo]{Riccardo Molinarolo}\author[F. Riva]{Filippo Riva$^*$}\author[F. Solombrino]{Francesco Solombrino}
	\address[G.\ Lazzaroni]{Dipartimento di Matematica e Informatica ``Ulisse Dini'',
		Universit\`a degli Studi di Firenze, Viale Morgagni 67/a, 50134 Firenze, Italy}
	\email{giuliano.lazzaroni@unifi.it}
	\address[R.\ Molinarolo and F.\ Solombrino]{Dipartimento di Matematica e Applicazioni ``Renato Caccioppoli'',
		Universit\`a degli Studi di Napoli Federico II, Via Cintia, Monte S.\ Angelo,
		80126 Napoli, Italy}
	\email{riccardo.molinarolo@unina.it}
	\email{francesco.solombrino@unina.it}
	\address[F.\ Riva]{Dipartimento di Matematica ``Felice Casorati'', Universit\`a degli Studi di Pavia, Via Ferrata, 5, 27100, Pavia, Italy}
	\email{filippo.riva@unipv.it}
	\thanks{$^*$ Corresponding Author}
	
	\title[Wave equation on moving domains and dynamic debonding]{On the wave equation on moving domains: regularity, energy balance and application to dynamic debonding}
	
	\begin{abstract}
		We revisit some issues about existence and regularity for the wave equation in noncylindrical domains. Using a method of diffeomorphisms, we show how, through increasing regularity assumptions, the existence of weak solutions, their improved regularity and an energy balance can be derived. As an application, we give  a rigorous definition of dynamic energy release rate density for some problems of debonding, and we formulate a proper notion of solution for such problems. We discuss the consistence of such formulation with previous ones, given in literature for particular cases.
	\end{abstract}
	
	\maketitle
	
	\noindent
	{\bf Keywords:} Wave equation; Moving domains; Energy balance; Dynamic debonding; Thin films; Dynamic energy release rate; Griffith's criterion.
	
	\bigskip
	
	\noindent   
	{{\bf 2020 Mathematics Subject Classification:}}
35L05, 
35L85, 
35Q74, 
35R37, 
74H20, 
74K35. 
	
	\tableofcontents
	
	\section*{Introduction}
In this paper we revisit some issues about existence and regularity for the wave equation in noncylindrical domains, i.e., in domains evolving in time. Our main motivation comes from an elastodynamic model for a thin film, initially glued onto a rigid substrate and progressively debonded by applying an external loading. As a result of this process, the debonded region is deformed according to the law of elastodynamics; moreover, its oscillations influence the evolution of the debonding front, i.e., the interface between the debonded region and the part of film still constrained onto the substrate. It is then natural to parametrize the debonded region by means of a time-dependent, growing domain, where the (transverse component of the) displacement satisfies the classical wave equation.
\par
On the other hand, the evolution of the domain is also an unknown of the model
and is governed by the physical principle of stability of the internal energy (kinetic and potential) of the body.
Rigorously writing the precise form of this energetic criterion requires some technical work,
in particular to characterize the energy release rate, which measures, loosely speaking, the amount of energy dissipated during an infinitesimal growth of the debonded region.
In fact, in literature the energy release rate and the propagation criterion were identified only in special cases, such as the one-dimensional setting \cite{DMLazNar, RivNar} and the case with radial solutions \cite{LazMolSol}, where some explicit formulas can be used.
\par
The main scope of this paper is to define the energy release rate for the dynamic debonding problem in a general setting, removing restrictive assumptions on the shape of the growing domains; this will lead to the flow rule governing the evolution of the domain. We show an integral formula for the dynamic energy release rate naturally arising from the energy balance for the wave equation and extending what previously found in special cases. To obtain this, we have to revisit the problem of the wave equation in time-dependent domains (with initial and boundary conditions) and provide a set of assumptions ensuring existence, uniqueness and regularity.
\par
In literature there are several results on the wave equation in noncylindrical domains, obtained with various methods
and under different assumptions on the evolution of the domains. In \cite{BernBonfLut} an abstract formulation is proposed, as well as a regularization procedure for the  operators involved in the problem. In \cite{Zol} the author performs a Galerkin method combined with a suitable penalization on the boundary. In \cite{Coop,BardCoop,DaPraZol,MaRuChu,Siko, ZhouSunLi} the authors employ changes of variables in order to recast the problem into  a fixed domain and then apply abstract results on hyperbolic equations via the semigroup theory. We also mention e.g.\  \cite{AlElSti,BernPozSav,CalvNovOrl,GianSav,HorRhe} and references therein for various approaches on different evolution equations (parabolic, Schr\"odinger, Navier-Stokes etc.) in noncylindrical domains.
\par 
In this paper we collect some results on existence, uniqueness and regularity for the wave equation and present them in a unitary perspective. Some results, already known in literature, are here provided with different proofs or under slightly different assumptions.
In Section \ref{sec:setting} we introduce a family $\Omega_t$ of Lipschitz domains in $\R^N$, depending on time $t\in[0,T]$. We define weak solutions of the wave equation in $\Omega_t$, complemented with natural initial conditions on $\Omega_0$ and a homogeneous Dirichlet condition on the boundary $\partial\Omega_t$. The results are then extended to nonhomogeneous Dirichlet conditions in Section \ref{sec:movingbdrycond}. We show an energy balance formula that holds true if the solution has a certain regularity in time and space
(Theorem \ref{teo energy balance}).
The most of the paper is then devoted to the rigorous proof of such regularity property.
\par
To this end, in Section~\ref{sec:fixeddomain} we follow the technique of changes of variables, assuming a certain time regularity of the family $\Omega_t$. 
Specifically, we require that there is a diffeomorphism $\Phi \colon[0,T] \times {\overline{\Omega}_0} \to \R^N$, such that $\Phi(t,\Omega_0)=\Omega_t$ for every $t$. This leads us to a hyperbolic problem in the fixed domain $\Omega_0$, with coefficients depending on time and space.
We compare two notions of solutions for such problem, called weak solutions and strong-weak solutions, respectively, and we prove they are equivalent (Proposition \ref{propequivweaksolv}).
Existence, uniqueness and regularity of solutions to the hyperbolic problem in the cylindrical domain $[0,T]\times\Omega_0$
are proved in Section~\ref{sec:hyperbolic}, by means of the Galerkin method (Theorem \ref{teoH2regularityv}).
The corresponding results in the noncylindrical domain follow under suitable assumptions on the diffeomorphisms
(Theorems \ref{teoequivalenceweaksol} and \ref{teo higher regularity u}).
\par
As a technical remark, in our results the regularity required on such diffeomorphisms is different if compared to the assumptions of other works in literature. For instance, in \cite{DaPraZol} and \cite{ZhouSunLi} the authors consider changes of variables of class $C^2([0,T];C^2({\overline{\Omega}_0}))$ and $C^3([0,T]\times{\overline{\Omega}_0})$, respectively; we instead require diffeomorphisms of class $C^{1,1}([0,T]\times{\overline{\Omega}_0})$ for existence of solutions (Sections~\ref{sec:fixeddomain} and \ref{sec:firstestimate}), while we need diffeomorphisms of class $C^{2,1}([0,T]\times{\overline{\Omega}_0})$ for uniqueness, regularity and energy balance (Sections~\ref{sec high reg} and \ref{sec:application}).
Moreover, for the regularity result we need to assume that $\Omega_0$ is convex or of class $C^2$. We stress in particular that, differently from previous works, we actually allow for a wider choice of the reference configuration $\Omega_0$, including some nonsmooth cases.
\par
In Section \ref{sec:application} we combine the results of Sections~\ref{sec:fixeddomain} and \ref{sec:hyperbolic}, thus providing a full statement of the energy balance (Theorem \ref{teo higher regularity u}). Moreover we show how our results may be applied to different settings:
the case of dimension one, extensively analysed in \cite{DMLazNar, DumMarCha,LBDM12,LazNar, LazNarQuas, LazNarInitiation, Riv, RivQuas, RivNar}; the case where each domain $\Omega_t$ is homothetic to $\Omega_0$;
the case where each domain is the sublevel set of a smooth function, which includes the case of radial solutions investigated in \cite{LazMolSol}.
\par
Our approach is deeply related with some works dealing with dynamic models for crack propagation in brittle fracture by means of the same ``method of diffeomorphisms'' \cite{Caponi, CaponiPhDThesis, CapLucTas, DMLuc}.
Indeed, also the formulation of dynamic fracture relies on the wave equation in a time-dependent domain, in this case a domain with a growing crack. However, since such a domain is not Lipschitz, our results do not apply to this case. On the other hand, in a dynamic fracture problem the domains only differ by a set of codimension one.
\par
The main similarity between the dynamic models for fracture and debonding is that
the wave equation is coupled with a flow rule governing the evolution of the domain. 
The latter arises from an energetic criterion \cite{Fre90} which may be stated as a maximum dissipation principle \cite{Lars}, 
or equivalently as a Griffith-type criterion involving the dynamic energy release rate.
In particular, it turns out that the flow rule implicitly depends on the solution of the wave equation.
\par
In Section \ref{sec:debonding} we show how the results of the previous sections allow us to rigorously define the energy release rate (and thus the propagation criterion) for the dynamic debonding model in a quite general setting.
More precisely, we introduce the \emph{density} of the dynamic energy release rate, which is obtained by a localization procedure, and a corresponding local version of Griffith's criterion, satisfied at each point of the debonding front.
\par
Our main achievement in this respect is a proper formulation of the coupled problem of dynamic debonding (wave equation together with local Griffith criterion) which includes the special cases analyzed in previous papers and may be applied without assuming a special form of the domains. We indeed show how the solutions found in the one-dimensional \cite{DMLazNar,RivNar} and radial \cite{LazMolSol} setting fulfil the formulation proposed here (Theorems~\ref{Thm coupled prob dim one} and \ref{Thm coupled prob radial}). The well posedness in the general framework still remains an open question, due to the high complexity arising from the coupling between the wave equation and Griffith criterion.

	\subsection*{Notation} Throughout the paper, the set of $M\times M$ matrices with real entries is denoted by $\R^{M\times M}$, and the subset of symmetric matrices is $\R^{M\times M}_{\rm sym}$. The identity matrix is denoted by $I$. For the transpose of a matrix $A$ we adopt the symbol $A^T$. The scalar product between two vectors $w,v\in\R^M$ is indicated by $w\cdot v$.\par  
	By $\dot f$ we mean the time derivative of a function $f=f(t,x)$. If $f$ is scalar-valued we write $\nabla f$ for its gradient with respect to spatial components, represented as a column vector. If $f$ is vector-valued we instead write $Df$ for its Jacobian matrix with respect to spatial components; as usual each row of $Df$ is the gradient of the corresponding component of $f$.\par
	Given an open set $E\subseteq \R^M$ with Lipschitz boundary $\partial E$, we denote by $\nu_E$ the outward unit normal to $E$. If $E\subseteq \R\times \R^N$, as often throughout the paper, we write $\nu_E^t$ and $\nu_E^x$ for the time- and space-component of the outward normal, respectively, namely $\nu_E=(\nu_E^t,\nu_E^x)\in \R\times \R^N$.\par 
	The integration with respect to the Lebesgue and to the $M$-dimensional Hausdorff measure is denoted respectively by $\d x$ (or $\d y$) and by $\d \mc H^M$. We adopt standard notations for Lebesgue and Sobolev spaces and for Bochner spaces. Given a Banach space $X$, we denote by $\langle w,v \rangle_X$ the duality product between $w\in X^*$ and $v\in X$. If $X=L^2(E)$, we identify it with its dual and, with a slight abuse of notation, we mean $\langle w,v \rangle_{L^2(E)}$ as the scalar product between $w$ and $v$. In the case of $X=H^1_0(E)$ we instead adopt the convention of rigged Hilbert spaces, namely if $w\in L^2(E)$ and $v\in H^1_0(E)$ one has $\langle w,v \rangle_{H^1_0(E)}=\langle w,v\rangle_{L^2(E)}$.
	
	\section{The wave equation on moving domains}\label{sec:setting}
	
	For $T>0$, we consider a family $\{\Omega_t\}_{t\in [0,T]}$ of domains in $\R^N$, with $N\in \N$, namely:
	\begin{subequations}\label{hypset}
		\begin{equation}
		\text{for every $t \in [0,T]$ the set $\Omega_{t}\subseteq \R^N$ is nonempty, open, bounded and Lipschitz;}\label{hypsetreg}
		\end{equation}
In some of the results, in view of the applications to debonding models (see Section~\ref{sec:debonding}), we shall also assume that the family $\{\Omega_t\}_{t\in [0,T]}$ is nondecreasing with respect to inclusion:
\begin{equation}\label{hypsetmonotone}
	\Omega_s\subseteq\Omega_t \quad\text{ for every }0\le s\le t\le T.
\end{equation}
	\end{subequations}	
	We denote the complement of $\Omega_t$ by
	\begin{equation*}
		\Omega_t^c:=\R^N\setminus \Omega_t.
	\end{equation*}
	Furthermore, we introduce the \lq\lq space-time\rq\rq domain $\mc O$ and its parabolic boundary $\Gamma$ by
	\begin{equation}\label{eq:O}
		\mc O:=\bigcup_{t\in(0,T)}\{t\}\times \Omega_t\quad\text{ and }\quad \Gamma:=\bigcup_{t\in(0,T)}\{t\}\times \partial\Omega_t.
	\end{equation}
	Let us consider the following formal problem for a function $u\colon \overline{\mc O} \to \R$:
	\begin{equation}\label{eq:u}
		\begin{cases}
			\ddot{u}(t,x)-\Delta u(t,x)=f(t,x), &(t,x)\in\mc O,\\
			u(t,x)=0, &(t,x)\in\Gamma,\\
			u(0,x)=u_0(x),& x \in \Omega_0,\\
			\dot{u}(0,x)=u_1(x), & x \in \Omega_0.
		\end{cases}
	\end{equation}
	The system above consists of a wave equation in the noncylindrical domain $\mc O$ with forcing term
	\begin{subequations}\label{eq:data}
		\begin{equation}\label{eq:f}
			f\in L^2(\mc O),
		\end{equation}
	complemented with initial conditions
		\begin{equation}
			u_0\in H^1_0(\Omega_0),\quad 
			u_1\in L^2(\Omega_0),
		\end{equation}
	and a homogeneous Dirichlet boundary condition on $\Gamma$.
	\end{subequations} 

	\begin{rem}\label{rem:nonhomogeneous}
All the results within the paper may also be adapted to more general hyperbolic equations of the form
		\begin{equation}\label{eq:uA}
			\ddot{u}(t,x)-\div(A(t,x)\nabla u(t,x))=f(t,x),\quad\quad(t,x)\in\mc O,
		\end{equation}
	which model for instance non-homogeneous materials. The minimal assumptions on the matrix $A(t,x)$ needed to perform all the arguments are the following regularity property
	\begin{equation}\label{eq:regA}
		A\in C^{1,1}(\overline{\mc O};\R^{N\times N}_{\rm sym}),
	\end{equation}
	and the uniform ellipticity condition
	\begin{equation}\label{eq:ellA}
		(A(t,x) w) \cdot w \geq c_A |w|^2, \quad \text{ for all } w \in \R^N,
	\end{equation}
which must hold for all $(t,x)\in \overline{\mc O}$ with a positive constant $c_A>0$.  \par
Since the proofs remain basically unchanged, in order to avoid too heavy notations, throughout the paper we prefer to focus our attention on problem \eqref{eq:u}, i.e. with $A(t,x)=I$. The main changes in the statements are instead highlighted, see Remarks~\ref{rem:A1}, \ref{rem:A2}, \ref{rem:A3}, \ref{rem:A4}, \ref{rem:A5}, \ref{rem:A6}.\par
For the sake of clarity we will also adopt the following convention:
\begin{equation*}
	|w|_{A(t,x)}:=\sqrt{(A(t,x) w) \cdot w},  \quad \text{ for all } w \in \R^N.
\end{equation*}
Notice that by \eqref{eq:regA} and \eqref{eq:ellA} the function $|\cdot|_{A(t,x)}$ defines a norm on $\R^N$ for every fixed $(t,x)\in \overline{\mc O}$. 
	\end{rem}

	Since we are working in time-dependent domains it is useful to introduce time-dependent Bochner spaces. Given a family of normed spaces $\{X_t\}_{t\in [0,T]}$, with a slight abuse of notation we say that a function $v$ belongs to $L^p(0,T;X_t)$, with $p\in [1,+\infty]$, if $v(t)\in X_t$ for a.e.\ $t\in (0,T)$ and the map $t\mapsto \|v(t)\|_{X_t}$ is in $L^p(0,T)$. Notice that $L^2(\mc O)=L^2(0,T;L^2(\Omega_t))$ by Fubini's theorem.
Using a similar convention for Sobolev spaces, whenever $\mc O$ is open one may write
\[
H^1(\mc O)=L^2(0,T;H^1(\Omega_t))\cap H^1(0,T;L^2(\Omega_t)) 
\]
and
\[
H^2(\mc O)=L^2(0,T;H^2(\Omega_t))\cap H^1(0,T;H^1(\Omega_t))\cap H^2(0,T;L^2(\Omega_t)) \,.
\]
However, we prefer to employ the notation of time-dependent Bochner spaces only when necessary, and only for spaces of the type $L^p(0,T;X_t)$. 
\par
	We can now give the definition of weak solution to problem \eqref{eq:u}. Notice that property $(i)$ of the definition below involves the time-dependent spaces $H^1(\Omega_t)$ and $L^2(\Omega_t)$, while $(ii)$ features usual Bochner spaces of continuous functions with fixed target.
In property $(ii)$ it is understood that we consider the restriction of $u$ to $[0,T]\times\Omega_0$
and of $\dot u$ to $[0,\delta]\times\Omega'$.
Here and henceforth, all solutions $u$ to equation \eqref{eq:u} will be extended to $0$ outside $\mc O$. 
	
	\begin{defin}\label{defweaksolu}
		We say that $u\colon\overline{\mc O} \to \R$ is a weak solution to problem \eqref{eq:u} with data \eqref{eq:data} if
		\begin{enumerate}
			\item[(i)] $u \in L^2(0,T; H^1_0(\Omega_t))$ and $\dot{u} \in L^2(0,T; L^2(\Omega_t))$;
			\item[(ii)] $u\in C^0([0,T]; L^2(\Omega_0))$ and  $\dot{u} \in C^0([0,\delta]; H^{-1}(\Omega'))$ for every $\delta>0$ and $\Omega'\subseteq \Omega_0$ open such that $[0,\delta]\times\Omega'\subseteq\mc O$; moreover, the initial conditions $u(0)=u_0$ and $\dot{u}(0)=u_1$ hold;
			\item[(iii)]  $u$ satisfies
			\begin{equation}\label{def eq u weak sol}
				\begin{aligned}
					-\int_{0}^{T} \langle \dot{u}(t), \dot{\eta}(t) \rangle_{L^2(\Omega_t)} \d t + \int_{0}^{T} \langle \nabla u(t), \nabla \eta(t) \rangle_{L^2(\Omega_t)}\d t 
					= \int_{0}^{T} \langle f(t), \eta(t) \rangle_{L^2(\Omega_t)}\d t,
				\end{aligned}
			\end{equation}
			for every $\eta \in L^2(0,T;H^1_0(\Omega_t))$ with $\dot\eta \in L^2(0,T;L^2(\Omega_t))$ and $\eta(T)=\eta(0)=0$.
		\end{enumerate}
	\end{defin}

\begin{rem}
The regularity assumptions of property $(ii)$, which allow one to state the initial conditions on position and velocity, are actually consequence of $(i)$  and $(iii)$ under additional hypotheses on $t\mapsto\Omega_t$.
\par
Assume for instance that \eqref{hypsetmonotone} holds, as in Section \ref{sec:exist-discrete}. Then
$(i)$ implies that $u$ and $\dot u$ belong to $ L^2(0,T; L^2(\Omega_0))$,
hence $u\in C^0([0,T]; L^2(\Omega_0))$;
moreover, the wave equation $(iii)$ implies that $\ddot u=\Delta u+f \in L^2(0,T; H^{-1}(\Omega_0))$,
hence $\dot u\in C^0([0,T]; H^{-1}(\Omega_0))$.
\par
Without assuming monotonicity, a similar argument holds provided there exist diffeomorphisms as in \eqref{assumptionsdiff}, satisfying \ref{H1}; such properties are assumed from Section \ref{sec:fixeddomain} onwards.
\end{rem}

In the paper we show how to obtain existence, uniqueness and an energy balance for solutions $u$ of problem \eqref{eq:u}, in particular to explicitly derive an expression for the energy variation due to the evolution of the domain (which will be interpreted as the energy spent during the debonding process). Such energy balance will be crucial in Section~\ref{sec:debonding}, where we present applications to dynamic debonding models. If one assumes existence of a regular solution as in \eqref{addreg}, deriving the energy balance \eqref{eq:enbalu} is a direct computation: we present it in Theorem~\ref{teo energy balance} for the Reader's convenience. A delicate issue is instead the rigorous proof of the regularity property \eqref{addreg}: this will be the aim of Sections~\ref{sec:fixeddomain} and \ref{sec:hyperbolic}. In Section~\ref{sec:application} we will then obtain a more precise statement of the energy balance, see Theorem~\ref{teo higher regularity u}.
	\begin{teo}\label{teo energy balance}
	Assume \eqref{hypsetreg}, assume that $\mc O$ is open with Lipschitz boundary and that 
	\begin{equation}\label{bdryO}
		\partial\mc O=\Gamma\cup (\{T\}\times {\overline{\Omega}_T})\cup(\{0\}\times {\overline{\Omega}_0}).
	\end{equation}
	Let $u$ be a weak solution to problem \eqref{eq:u} satisfying  the following regularity property:
	\begin{equation}\label{addreg}
		u\in L^2(0,T;H^2(\Omega_t)\cap H^1_0(\Omega_t)),\quad \dot{u}\in L^2(0,T;H^1(\Omega_t)),\quad \ddot u\in L^2(0,T;L^2(\Omega_t)).
	\end{equation}
	Then for every $t \in [0,T]$ the following energy balance holds true:
		\begin{equation}\label{eq:enbalu}
		\begin{aligned}
				&\quad\,\frac 12\|\dot u(t)\|^2_{L^2(\Omega_t)}+\frac 12\|\nabla u(t)\|^2_{L^2(\Omega_t)}-\int_{\Gamma_t}\frac{\nu_{\mc O}^t}{2}\left[1-\left(\frac{\nu_{\mc O}^t}{|\nu_{\mc O}^x|}\right)^2\right]|\nabla u|^2\d \mc H^N\\
				&= \frac 12\|u_1\|^2_{L^2(\Omega_0)}+\frac 12\|\nabla u_0|^2_{L^2(\Omega_0)}+ \int_{0}^{t}\langle f(s), \dot{u}(s)\rangle_{L^2(\Omega_s)} \d s,
		\end{aligned}
		\end{equation}
	where $\Gamma_t:=\{( s,x)\in \Gamma:\,  s\in (0,t)\}$.
	\end{teo}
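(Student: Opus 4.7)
The strategy is to multiply the PDE $\ddot u-\Delta u=f$ by $\dot u$, recast the resulting identity as a space-time divergence, and apply the divergence theorem on the truncated domain $\mc O_t:=\{(s,x)\in\mc O:\,s\in(0,t)\}$. The regularity \eqref{addreg} yields $u\in H^2(\mc O_t)$, so all products and traces appearing below are meaningful and the pointwise (a.e.) manipulations on the equation are legitimate.

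Concretely, I would introduce the space-time energy--momentum vector field $F:=\bigl(\tfrac12(|\dot u|^2+|\nabla u|^2),\,-\dot u\,\nabla u\bigr)\in W^{1,1}(\mc O_t;\R^{N+1})$. A direct computation gives
\[
\div_{(s,x)}F=\dot u\,\ddot u+\nabla u\cdot\nabla\dot u-\nabla\dot u\cdot\nabla u-\dot u\,\Delta u=f\,\dot u
\]
a.e.\ in $\mc O_t$. Since $\mc O_t$ is Lipschitz (by \eqref{bdryO} together with the flat top $\{t\}\times\Omega_t$) the divergence theorem applies. The outward normal is $(1,0)$ on $\{t\}\times\Omega_t$, $(-1,0)$ on $\{0\}\times\Omega_0$ and $(\nu_{\mc O}^t,\nu_{\mc O}^x)$ on $\Gamma_t$; the top and bottom faces therefore yield the kinetic-plus-potential energy at time $t$ and minus the initial energy, respectively, while the lateral face produces the additional boundary term to be identified.

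The crucial step is the computation of $F\cdot\nu_{\mc O}$ on $\Gamma_t$. The Dirichlet condition $u=0$ on $\Gamma$, which is well defined as a trace thanks to \eqref{addreg}, forces the tangential space-time derivative of $u$ to vanish on $\Gamma_t$; equivalently,
\[
(\dot u,\nabla u)=\lambda\,(\nu_{\mc O}^t,\nu_{\mc O}^x)\quad\mc H^N\text{-a.e.\ on }\Gamma_t,
\]
for some pointwise scalar $\lambda$. Then $\nabla u=\lambda\,\nu_{\mc O}^x$ gives $\lambda^2=|\nabla u|^2/|\nu_{\mc O}^x|^2$ wherever $|\nu_{\mc O}^x|\ne 0$ (on the remaining null set $\nabla u=0$ as well, so the integrand is zero). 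Plugging back and using $(\nu_{\mc O}^t)^2+|\nu_{\mc O}^x|^2=1$, a short algebraic manipulation yields
\[
F\cdot\nu_{\mc O}=\tfrac12\lambda^2\bigl[(\nu_{\mc O}^t)^2+|\nu_{\mc O}^x|^2\bigr]\nu_{\mc O}^t-\lambda^2|\nu_{\mc O}^x|^2\,\nu_{\mc O}^t=-\frac{\nu_{\mc O}^t}{2}\,|\nabla u|^2\left[1-\left(\frac{\nu_{\mc O}^t}{|\nu_{\mc O}^x|}\right)^2\right].
\]
Combining the three boundary contributions with the identity $\int_{\mc O_t}f\,\dot u\,\d s\,\d x=\int_0^t\langle f(s),\dot u(s)\rangle_{L^2(\Omega_s)}\,\d s$ gives \eqref{eq:enbalu}.

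The main technical obstacle is the rigorous justification of the parallelism relation $(\dot u,\nabla u)\parallel \nu_{\mc O}$ on $\Gamma_t$: heuristically this is just the fact that the tangential derivative of a function vanishing on a Lipschitz hypersurface is zero, but in our anisotropic Sobolev setting some care is needed. A clean approach is to approximate $u$ by functions smooth up to $\partial\mc O$ and vanishing on $\Gamma_t$---for instance via a diffeomorphism flattening $\Gamma_t$ locally, in the spirit of Section \ref{sec:fixeddomain}---for which the identity is obvious, and then pass to the limit exploiting the $L^2$-convergence of traces guaranteed by the $H^2$-regularity of $u$ on $\mc O_t$.
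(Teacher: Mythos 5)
Your proposal is correct and follows essentially the same route as the paper: the paper likewise multiplies the equation by $\dot u$, integrates by parts over the space-time domain up to time $t$, and uses the relation $\dot u\,\nu_{\mc O}^x=\nu_{\mc O}^t\,\nabla u$ on $\Gamma$ (your parallelism $(\dot u,\nabla u)\parallel\nu_{\mc O}$, forced by the Dirichlet condition) to identify the lateral boundary term. The only difference is presentational: you package the computation as a single application of the divergence theorem to the energy--momentum field $F$, whereas the paper carries out the same integrations by parts in two stages before inserting the boundary relation.
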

In the energy balance \eqref{eq:enbalu} we recognize in the left-hand side the kinetic energy, the potential energy and a term corresponding to the evolution of the domain, while in the right-hand side we see the initial energy and the work of external forces. The integral over $\Gamma_t$ may be positive or negative, according to the geometry of $\mc O$. If the monotonicity condition \eqref{hypsetmonotone} is in force one has $\nu^t_{\mc O}\le0$; moreover a typical assumption is that the growth of the domains is subsonic, i.e.\ $|\nu^t_{\mc O}|\le|\nu^x_{\mc O}|$ (in this situation, the set $\mc O$ is usually called time-like, see \cite{Coop,BardCoop,DaPraZol, MaRuChu,Siko,Zol}). In applications to debonding models, where both conditions hold, the integral over $\Gamma_t$ can be thus interpreted as energy dissipated in the debonding process.
		\begin{rem}
		We point out that condition \eqref{bdryO} is a weak regularity assumption on the time-depen\-dence of the domain, and it does not hold in general for sets satisfying \eqref{hypset}. Indeed, if there is a time-discontinuity at time $t_0$, the boundary of $\mc O$ will contain a set of the form $\{t_0\}\times D$, for some set $D$. The Reader may think for instance to the simple one-dimensional example
		\begin{equation*}
			\Omega_t=\begin{cases}
				(0,1),&\text{if }t\in [0,1),\\
				(0,2),&\text{if }t\in[1,2],
			\end{cases}
		\end{equation*}
		in which the set $\{1\}\times(1,2)$ is contained in $\partial\mc O$. In Section~\ref{sec:fixeddomain} we will assume a stronger regularity condition, which will imply \eqref{bdryO} (see Remark~\ref{rmkdiffbdry}). 
	\end{rem}
	\begin{rem}
		The request of higher regularity \eqref{addreg} is needed to give a meaning to the term in \eqref{eq:enbalu} representing the energy variation due to the evolution of the domain, where $\nabla u$ has to be integrated along the lateral boundary $\Gamma$. A reformulation of this term, in such a way that the energy balance may be written for genuine weak solutions (i.e. just satisfying (i) in Definition~\ref{defweaksolu}), would be certainly desirable; unfortunately, to our better knowledge, a suitable rewriting of such term is still not available.
	\end{rem}

	\begin{rem}\label{rem:A1} 
		Under the additional assumptions of Remark~\ref{rem:nonhomogeneous}, the energy balance \eqref{eq:enbalu} changes in
		\begin{equation*}
			\begin{aligned}
				&\quad\,\frac 12\|\dot u(t)\|^2_{L^2(\Omega_t)}+\frac 12\left<A(t)\nabla u(t),\nabla u(t)\right>_{L^2(\Omega_t)}-\int_{\Gamma_t}\frac{\nu_{\mc O}^t}{2}\left[|\nabla u|^2_A-\left(\frac{\nu_{\mc O}^t}{|\nu_{\mc O}^x|}\right)^2|\nabla u|^2\right]\d \mc H^N\\
				&= \frac 12\|u_1\|^2_{L^2(\Omega_0)}+\frac 12\left<A(0)\nabla u_0,\nabla u_0\right>_{L^2(\Omega_0)}+ \int_{0}^{t}\!\!\!\langle f(s), \dot{u}(s)\rangle_{L^2(\Omega_s)} \d s+\frac 12\int_{0}^{t}\!\!\!\langle\dot{A}(s)\nabla u(s),\nabla u(s)\rangle_{L^2(\Omega_s)}\d s.
			\end{aligned}
		\end{equation*}	
	\end{rem}
	\begin{proof}[Proof of Theorem~\ref{teo energy balance}]
		By exploiting \eqref{addreg} we deduce that $u$ belongs to $H^2(\mc O)$ and thus satisfies 
		\begin{equation*}
			\ddot{u}(t,x) -\Delta u(t,x) = f(t,x), \qquad \text{for a.e. } (t,x)\in \mathcal{O}.
		\end{equation*}
		Multiplying the previous equation by a function 
		$\varphi \in L^2(0,T; H^1(\Omega_t))$ such that $\dot \varphi\in   L^2(0,T; L^2(\Omega_t))$, and integrating by parts in the ``space-time" domain $\mathcal{O}$, for all $t\in [0,T]$ we obtain:
		\begin{equation*}
			\begin{aligned}
				&\langle \dot{u}(t), \varphi (t) \rangle_{L^2(\Omega_t)} 
				-\langle u_1, \varphi (0) \rangle_{L^2(\Omega_0)} 
				-\int_{0}^{t}\langle \dot{u}(s),\dot{\varphi}(s) \rangle_{L^2(\Omega_s)} \d s +\int_{0}^{t}\langle \nabla u(s),\nabla \varphi(s) \rangle_{L^2(\Omega_s)}\d  s\\
				=&\int_{0}^{t}\langle f(s), \varphi(s) \rangle_{L^2(\Omega_s)}\d  s
				-\int_{\Gamma_t}\left( \dot{u}\,\nu_{\mc O}^t -\nabla u\cdot\nu_{\mc O}^x \right) \varphi \, \d\mathcal{H}^N.
			\end{aligned}
		\end{equation*}
		Thanks to the regularity provided by \eqref{addreg}, we can choose as test function $\varphi = \dot{u}$. We thus obtain
		\begin{equation}\label{eq:formalenbaln}
			\begin{aligned}
				&\norm{\dot{u}(t)}^2_{L^2(\Omega_t)}
				- \norm{u_1}^2_{L^2(\Omega_0)}
				-\int_{0}^{t}\langle \dot{u}(s),\ddot{u}(s) \rangle_{L^2(\Omega_s)} \d s +\int_{0}^{t}\langle \nabla u(s),\nabla \dot{u}(s) \rangle_{L^2(\Omega_s)}\d  s\\
				=&\int_{0}^{t}\langle f(s), \dot{u}(s) \rangle_{L^2(\Omega_s)}\d  s
				- \int_{\Gamma_t} (\dot{u})^2 \nu_{\mc O}^t\, \d\mathcal{H}^N + \int_{\Gamma_t} \left(\nabla u\cdot\nu_{\mc O}^x \right) \dot{u} \, \d\mathcal{H}^N.
			\end{aligned}
		\end{equation}
		Integrating by parts in time the integral terms in the first line, we get that
		\begin{subequations}\label{eq:subequnabla}
				\begin{align}
				\label{eq ddot u energy balance}
				\int_{0}^{t}\langle \dot{u}(s),\ddot{u}(s) \rangle_{L^2(\Omega_s)} \d s& = \frac{1}{2} \norm{\dot{u}(t)}^2_{L^2(\Omega_t)} -\frac{1}{2} \norm{u_1}^2_{L^2(\Omega_0)} +\frac{1}{2} \int_{\Gamma_t} (\dot{u})^2 \nu_{\mc O}^t\, \d\mathcal{H}^N,
				\\
				\label{eq nabla dot u energy balance}
				\int_{0}^{t}\langle \nabla u(s),\nabla \dot{u}(s) \rangle_{L^2(\Omega_s)}\d  s &= 
				\frac{1}{2} \norm{\nabla u(t)}^2_{L^2(\Omega_t)} -\frac{1}{2} \norm{\nabla u_0}^2_{L^2(\Omega_0)} + \frac{1}{2} \int_{\Gamma_t} |\nabla u|^2 \nu_{\mc O}^t\, \d\mathcal{H}^N.
			\end{align}
		\end{subequations}
		We now notice that, since $u\equiv0$ on $\Gamma$, it must hold
		\begin{equation*}\label{eq:releta}
			\dot{u}\,\nu_{\mc O}^x=\nu_{\mc O}^t \nabla u,\quad\mc H^N\text{-a.e. on }\Gamma,
		\end{equation*}
		which in particular implies the relations
		\begin{subequations}\label{eq:reldern}
				\begin{alignat}{3}
				&\dot{u}(\nabla u\cdot\nu_{\mc O}^x)&&=\nu_{\mc O}^t|\nabla u|^2, &&\quad\quad\mc H^N\text{-a.e. on }\Gamma,\\
			&(\dot{u})^2|\nu_{\mc O}^x|^2&&=(\nu_{\mc O}^t)^2|\nabla u|^2,&&\quad\quad\mc H^N\text{-a.e. on }\Gamma.
		\end{alignat}
		\end{subequations}
		By plugging \eqref{eq:subequnabla} and \eqref{eq:reldern} into \eqref{eq:formalenbaln}, we finally conclude that
		\begin{equation*}
			\begin{aligned}
				&\frac{1}{2} \norm{\dot{u}(t)}^2_{L^2(\Omega_t)} -\frac{1}{2} \norm{u_1}^2_{L^2(\Omega_0)}
				+ \frac{1}{2} \norm{\nabla u(t)}^2_{L^2(\Omega_t)} -\frac{1}{2} \norm{\nabla u_0}^2_{L^2(\Omega_0)}
				\\
				=&\int_{0}^{t}\langle f(s), \dot{u}(s) \rangle_{L^2(\Omega_s)}\d  s
				+ \int_{\Gamma_t}\frac{\nu_{\mc O}^t}{2}\left[1-\left(\frac{\nu_{\mc O}^t}{|\nu_{\mc O}^x|}\right)^2\right]|\nabla u|^2\d \mc H^N,
			\end{aligned}
		\end{equation*}
		thus the statement is proved.
	\end{proof}
	
	\subsection{An existence result} \label{sec:exist-discrete}
	We conclude this section by proposing a novel strategy to prove existence of solutions to the wave equations in noncylindrical domains; see \cite{CalvNovOrl} for a similar approach in the context of parabolic equations. It is based on time-discretization and it does not require any time-regularity on the growth of the sets $\Omega_t$. However it is now crucial to require that the family $\{\Omega_t\}_{t\in [0,T]}$ is nondecreasing, namely \eqref{hypsetmonotone}. Under these assumptions, we also obtain an energy inequality (in contrast, the energy balance was obtained before under stronger regularity hypotheses on the solution).\par
	We stress that our discretization procedure is substantially different (and from our point of view simpler and more intuitive) with respect to the (variant of) the classical minimizing movements approach used for hyperbolic problems, for instance applied in the context of dynamic fracture mechanics in \cite{DMToad}. Indeed, the latter relies on an iterative minimization of a suitable energy, followed by the construction of a piecewise affine interpolant. In our approach, instead, after the discretization of the time interval $[0,T]$ we consider the related piecewise constant evolution of the domains $\Omega_t$ and in each discrete-time interval we pick the solution of the wave equation in the corresponding cylindrical domain (see \eqref{eq:discrwave}). This allows us to employ well known results for the wave equation in cylindrical domains.
	
Before stating the result, we recall that, given a Banach space $X$, the set $C^0_w([0,T];X)$ denotes the space of functions $u\colon [0,T]\to X$ which are continuous with respect to the weak topology of $X$.
Notice that here $X$ is independent of time: in fact, we adopt the convention that the solutions of \eqref{eq:u} are extended to zero outside $\mc O$.
	
	\begin{teo}\label{teoexistencecylindermethod}
		Assume \eqref{hypset} and \eqref{eq:data}. Then there exists a weak solution $u$ of problem \eqref{eq:u} in the sense of Definition \ref{defweaksolu}. Moreover,
		\begin{align*}
			& u \in C^0_w([0,T];H^1_0(\Omega_T)),
			\\
			& \dot{u} \in L^\infty(0,T;L^2(\Omega_t))  \cap C^0_w([\bar t,T];L^2(\Omega_{\bar t})), \quad \text{for all } \bar t\in [0,T).
		\end{align*}
		Furthermore the following energy inequality holds true for every $t\in [0,T]$:
		\begin{equation}\label{eq:enineq}
			\frac 12\norm{\dot{u}(t)}^2_{L^2(\Omega_t)}+\frac 12\norm{\nabla {u}(t)}^2_{L^2(\Omega_t)}\le\frac 12\norm{u_1}^2_{L^2(\Omega_0)}+\frac 12\norm{\nabla {u}_0}^2_{L^2(\Omega_0)}+\int_{0}^{t}\scal{f(s)}{\dot{u}(s)}_{L^2(\Omega_s)}\d s.
		\end{equation}
	\end{teo}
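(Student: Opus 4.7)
The plan is to carry out the time-discretization strategy sketched just before the statement. For each $n\in\mathbb{N}$, set $\tau_n=T/n$, $t_k^n=k\tau_n$, and define the piecewise constant approximation $\Omega^n(t):=\Omega_{t_k^n}$ for $t\in[t_k^n,t_{k+1}^n)$; monotonicity \eqref{hypsetmonotone} gives $\Omega^n(t)\subseteq\Omega_t$. Inductively on $k$, I solve the classical wave equation in the cylindrical domain $(t_k^n,t_{k+1}^n)\times\Omega_{t_k^n}$ with homogeneous Dirichlet boundary conditions, source $f$ restricted to the cylinder, and initial data at $t_k^n$ given by $(u_0,u_1)$ for $k=0$, and for $k\ge 1$ by extending the traces $(u^n(t_k^n),\dot u^n(t_k^n))\in H^1_0(\Omega_{t_{k-1}^n})\times L^2(\Omega_{t_{k-1}^n})$ by zero to $\Omega_{t_k^n}$; this extension is an isometry for both the $H^1_0$- and $L^2$-norms. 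Gluing the pieces produces $u^n$, extended by zero outside $\mc O^n:=\bigcup_k[t_k^n,t_{k+1}^n)\times\Omega_{t_k^n}\subseteq\mc O$, with $u^n(t)\in H^1_0(\Omega^n(t))\subseteq H^1_0(\Omega_t)$.

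Summing the classical cylindrical energy identities over $k=0,\dots,n-1$, and observing that no spurious node contribution appears thanks to the isometric nature of the zero-extension, I obtain the discrete analogue of \eqref{eq:enineq} with equality. A Gronwall argument then yields the uniform bounds
\begin{equation*}
\|u^n\|_{L^\infty(0,T;H^1_0(\Omega_T))}+\|\dot u^n\|_{L^\infty(0,T;L^2(\Omega_T))}\le C,
\end{equation*}
independent of $n$. By Banach--Alaoglu I extract a (non-relabelled) subsequence with $u^n\rightharpoonup^* u$ and $\dot u^n\rightharpoonup^* \dot u$ in the corresponding weak-$*$ topologies, the two limits being related by distributional differentiation. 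Since $\supp u^n(t,\cdot)\subseteq\overline{\Omega^n(t)}\subseteq\overline{\Omega_t}$, in the limit $\supp u(t,\cdot)\subseteq\overline{\Omega_t}$; combined with the Lipschitz regularity of $\Omega_t$ this gives $u(t)\in H^1_0(\Omega_t)$ for a.e.\ $t$, proving (i) of Definition~\ref{defweaksolu}. Property (ii) follows from $u^n(0)=u_0$ and $\dot u^n(0)=u_1$ combined with the weak continuity in time inherited from the wave equation.

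For (iii), I pass to the limit in the discrete weak formulations. The main technical obstacle is that a test function $\eta$ admissible for the limit equation, i.e.\ with $\eta(t)\in H^1_0(\Omega_t)$, cannot be used directly in the discrete equation, which demands $\eta(t)\in H^1_0(\Omega^n(t))$: a naive restriction would not preserve the boundary condition on $\partial\Omega^n(t)\cap\Omega_t$. One is thus led to a density argument, approximating $\eta$ by test functions $\eta^n$ with $\eta^n(t)\in H^1_0(\Omega^n(t))$ and $\eta^n\to \eta$, $\dot\eta^n\to\dot\eta$ in $L^2$, which is viable because $\mc O^n\uparrow \mc O$ in measure by monotonicity. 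Finally, the energy inequality \eqref{eq:enineq} follows by taking the liminf in the discrete identity, using weak lower semicontinuity of the norms on the left-hand side and the strong-weak pairing for the forcing term on the right-hand side.
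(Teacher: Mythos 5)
Your strategy coincides with the paper's: piecewise-constant-in-time freezing of the domains, cylindrical solves with zero-extended data, summed energy identities, Gr\"onwall, weak compactness, and a density argument for the test functions. Up to and including the verification of (i)--(iii) of Definition~\ref{defweaksolu}, your outline matches the paper's proof step for step (the paper allows arbitrary partitions with vanishing mesh rather than uniform ones, which is immaterial).

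The one place where your argument, as written, does not go through is the final step: you claim the energy inequality \eqref{eq:enineq} for \emph{every} $t$ ``by taking the liminf in the discrete identity, using weak lower semicontinuity of the norms on the left-hand side.'' The compactness you have extracted is weak (or weak-$*$) convergence of $u^n$ and $\dot u^n$ in Bochner spaces over $(0,T)$; this does \emph{not} give $\dot u^n(t)\rightharpoonup\dot u(t)$ in $L^2(\Omega_T)$ for a fixed $t$, so the pointwise lower semicontinuity $\norm{\dot u(t)}^2\le\liminf_n\norm{\dot u^n(t)}^2$ is not available directly. The paper circumvents this in two stages: first it integrates the discrete energy balance over an arbitrary interval $[\alpha,\beta]$, passes to the limit there (where lower semicontinuity of the integrated quadratic functionals under weak $L^2(0,T;\cdot)$ convergence is legitimate), and by arbitrariness of $\alpha,\beta$ obtains \eqref{eq:enineq} for a.e.\ $t$; second, it upgrades ``a.e.\ $t$'' to ``every $t$'' by approaching a given $\bar t$ from the right along good times $t_k\searrow\bar t$ and exploiting the already-established weak continuity $u\in C^0_w([0,T];H^1_0(\Omega_T))$, $\dot u\in C^0_w([\bar t,T];L^2(\Omega_{\bar t}))$ together with $\Omega_{\bar t}\subseteq\Omega_{t_k}$ (the endpoint $\bar t=T$ needs a separate extension of the family past $T$). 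You should insert this two-stage argument; without it the inequality is only justified in a formal sense, and the ``for every $t$'' part of the statement is not reached at all.
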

	
	\begin{proof}
		We adopt a time discretisation argument: we consider a sequence of partitions of $[0,T]$ with vanishing size, namely for every $n\in \N$ we take $0=t^n_0<t^n_1<\dots<t^n_{k(n)}=T$ satisfying
		\begin{equation}\label{eq:finezza}
			\lim\limits_{n\to +\infty}\sup\limits_{k=1,\dots, k(n)}|t^n_k-t^n_{k-1}|=0.
		\end{equation}
		For every $k=1,\dots,{k(n)}$ we then take $u^n_k$ as the unique weak solution of the wave equation in the cylinder $(t^n_{k-1},t^n_{k})\times\Omega_{t^n_{k-1}}$ with initial data $u^n_{k-1}(t^n_{k-1})$ and $\dot{u}^n_{k-1}(t^n_{k-1})$ (with the convention $u^n_0(0)=u_0$ and $\dot{u}^n_0(0)=u_1$), namely
		\begin{equation}\label{eq:discrwave}
			\begin{cases}
				\ddot{u}^n_k-\Delta u^n_k=f, &\text{in }(t^n_{k-1},t^n_{k})\times\Omega_{t^n_{k-1}},\\
				u^n_k=0, &\text{in } (t^n_{k-1},t^n_{k})\times\partial\Omega_{t^n_{k-1}},\\
				u^n_k(t^n_{k-1})=u^n_{k-1}(t^n_{k-1}),\\
				\dot{u}^n_k(t^n_{k-1})=\dot{u}_{k-1}(t^n_{k-1}).
			\end{cases}
		\end{equation}
		We adopt the usual convention that $u^n_k$ is extended to $0$ in $\Omega^c_{t^n_{k-1}}$. Standard arguments show the following properties:
		\begin{itemize}
			\item[(a)] $u^n_k$ belongs to $ C^0([t^n_{k-1},t^n_{k}];H^1_0(\Omega_T))\cap C^1([t^n_{k-1},t^n_{k}];L^2(\Omega_T))$;
			\item[(b)] $u^n_k(t^n_{k-1})=u^n_{k-1}(t^n_{k-1})$ in the sense of $C^0([t^n_{k-1},t^n_{k}]; H^1_0(\Omega_T))$ and $\dot{u}^n_k(t^n_{k-1})=\dot{u}^n_{k-1}(t^n_{k-1})$ in the sense of $C^0([t^n_{k-1},t^n_{k}]; L^2(\Omega_T))$;
			\item[(c)] for every $\eta\in L^2(t^n_{k-1},t^n_{k};H^1_0(\Omega_T))\cap H^1(t^n_{k-1},t^n_{k};L^2(\Omega_T))$ s.t. $\eta(t)=0$ in $\Omega^c_{t^n_{k-1}}$ for a.e. $t\in (t^n_{k-1},t^n_{k})$ it holds:
			\begin{equation*}
				\begin{aligned}
					&\quad\, -\int_{t^n_{k-1}}^{t^n_{k}}\scal{\dot{u}^n_k( s)}{\dot{\eta}( s)}_{L^2(\Omega_T)}\d s+\int_{t^n_{k-1}}^{t^n_{k}}\scal{\nabla u^n_k( s)}{\nabla \eta( s)}_{L^2(\Omega_T)}\d s\\
					& =\int_{t^n_{k-1}}^{t^n_{k}}\scal{f( s)}{\eta( s)}_{L^2(\Omega_T)}\d s+\scal{\dot{u}^n_{k-1}(t^n_{k-1})}{{\eta}(t^n_{k-1})}_{L^2(\Omega_T)}-\scal{\dot{u}^n_{k}(t^n_{k})}{{\eta}(t^n_{k})}_{L^2(\Omega_T)}.
				\end{aligned}
			\end{equation*}
		\end{itemize}
		Furthermore we also have the energy balance:
		\begin{itemize}
			\item[(d)] for every $t\in[t^n_{k-1},t^n_{k}]$ it holds:
			\begin{equation}\label{eq:enbalk}
				\begin{aligned}
					&\quad\frac 12\norm{\dot{u}^n_k(t)}^2_{L^2(\Omega_{t^n_{k-1}})}+\frac 12\norm{\nabla {u}^n_k(t)}^2_{L^2(\Omega_{t^n_{k-1}})}\\
					&=\frac 12\norm{\dot{u}^n_{k-1}(t_{k-1})}^2_{L^2(\Omega_{t^n_{k-2}})}+\frac 12\norm{\nabla {u}^n_{k-1}(t_{k-1})}^2_{L^2(\Omega_{t^n_{k-2}})}+\int_{t^n_{k-1}}^{t}\scal{f( s)}{\dot{u}^n_k( s)}_{L^2(\Omega_{t^n_{k-1}})}\d s,
				\end{aligned}
			\end{equation}
		\end{itemize}
		where we extended also $f$ in the whole $(0,T)\times\Omega_T$ by setting $f\equiv 0$ outside $\mc O$. In particular, by recalling again that $u^n_k(t)$ vanishes in $\Omega^c_{t^n_{k-1}}$ and by summing \eqref{eq:enbalk} for $j=2,\dots, k$, we deduce for every $t\in[t^n_{k-1},t^n_{k}]$:
		\begin{equation}\label{eq:enbalk2}
			\begin{aligned}
				&\quad\,\frac 12\norm{\dot{u}^n_k(t)}^2_{L^2(\Omega_T)}+\frac 12\norm{\nabla {u}^n_k(t)}^2_{L^2(\Omega_T)}\\
				&=\frac 12\norm{u_1}^2_{L^2(\Omega_T)}{+}\frac 12\norm{\nabla u_0}^2_{L^2(\Omega_T)}{+}\int_{t^n_{k-1}}^{t}\!\!\!\!\!\!\!\scal{f( s)}{\dot{u}^n_k( s)}_{L^2(\Omega_T)}\d s{+}\!\sum_{j=2}^{k}\!\int_{t^n_{j-2}}^{t^n_{j-1}}\!\!\scal{f( s)}{\dot{u}^n_{j-1}( s)}_{L^2(\Omega_T)}\d s.
			\end{aligned}
		\end{equation}
		For every $n\in \N$ we now define 
		\begin{equation*}\label{eq:vn}
			u^n(t):=\begin{cases}
				u^n_k(t),&\text{ if }t\in [t^n_{k-1},t^n_k)\text{ for some }k=1,\dots,{k(n)},\\
				u^n_{k(n)}(T), &\text{ if }t=T.
			\end{cases}
		\end{equation*}
		By construction $u^n$ belongs to $C^0([0,T];H^1_0(\Omega_T))\cap C^1([0,T];L^2(\Omega_T))$ and  satisfies:
		\begin{itemize}
			\item[(a')] $u^n=0$ in $\bigcup\limits_{k=1}^{k(n)}[t^n_{k-1},t^n_k]\times\Omega^c_{t^n_{k-1}}\supseteq \bigcup\limits_{t\in[0,T]}\{t\}\times\Omega^c_t$;
			\item[(b')] $u^n(0)=u_0$ in the sense of $C^0([0,T];H^1_0(\Omega_T))$ and $\dot{u}^n(0)=u_1$ in the sense of $C^0([0,T];L^2(\Omega_T))$;
			\item[(c')] for every $\eta\in C^{\infty}_{\rm c}((0,T)\times \Omega_T)$ s.t. $\supp\eta\subseteq \bigcup\limits_{k=1}^{k(n)}[t^n_{k-1},t^n_k)\times\Omega_{t^n_{k-1}}$ it holds:
			\begin{equation}\label{eq:weakvn}
				-\int_{0}^{T}\scal{\dot{u}^n( s)}{\dot{\eta}( s)}_{L^2(\Omega_T)}\d s+\int_{0}^{T}\scal{\nabla u^n( s)}{\nabla \eta( s)}_{L^2(\Omega_T)}\d s=\int_{0}^{T}\scal{f( s)}{\eta( s)}_{L^2(\Omega_T)}\d s.
			\end{equation}
		\end{itemize} 
Furthermore the energy balance \eqref{eq:enbalk2} reads as follows:
		\begin{itemize}
			\item[(d')] for every $t\in[0,T]$ it holds:
			\begin{equation}\label{eq:enbaln}
				\frac 12\norm{\dot{u}^n(t)}^2_{L^2(\Omega_T)}\!{+}\frac 12\norm{\nabla {u}^n(t)}^2_{L^2(\Omega_T)}\!=\!\frac 12\norm{u_1}^2_{L^2(\Omega_T)}{+}\frac 12\norm{\nabla {u}_0}^2_{L^2(\Omega_T)}\!{+}\!\!\int_{0}^{t}\!\scal{f( s)}{\dot{u}^n( s)}_{L^2(\Omega_T)}\d s.
			\end{equation}
		\end{itemize}
		By a classical Gr\"onwall argument, since the forcing term $f$ is in $L^2((0,T)\times \Omega_T)$, we thus deduce
		\begin{equation*}
			\max\limits_{t\in [0,T]}\left(\frac 12\norm{\dot{u}^n(t)}^2_{L^2(\Omega_T)}+\frac 12\norm{\nabla {u}^n(t)}^2_{L^2(\Omega_T)}\right)\le C.
		\end{equation*}
		This implies the existence of $u\in L^\infty(0,T;H^1_0(\Omega_T))$, and of $u^*\in L^\infty(0,T;L^2(\Omega_T))$ such that, up to subsequences (not relabelled), we have
		\begin{equation*}
			u^n\rightharpoonup u,\quad\text{weakly in } L^2(0,T;H^1_0(\Omega_T)),\quad\text{ and }\quad\dot{u}^n\rightharpoonup u^*,\quad\text{weakly in } L^2(0,T;L^2(\Omega_T)).
		\end{equation*}
		It is standard to show that $u^*=\dot{u}$. Thus we deduce the existence of a function $u\in L^\infty(0,T;H^1_0(\Omega_T))$ with $\dot u \in L^\infty(0,T;L^2(\Omega_T))$ such that
		\begin{equation}\label{eq:convergence}
			u^n\rightharpoonup u,\quad\text{weakly in } L^2(0,T;H^1_0(\Omega_T))\cap H^1(0,T;L^2(\Omega_T)).
		\end{equation}
Notice that in \eqref{eq:convergence} the target spaces are independent of time; however,
		by (a') we easily deduce that $u\equiv 0$ outside $\mc O$, so we get the stronger conditions $u\in L^\infty(0,T;H^1_0(\Omega_t))$ and $\dot u \in L^\infty(0,T;L^2(\Omega_t))$. By the continuous embedding $L^\infty(0,T; H^1_0(\Omega_T))\cap H^1(0,T;L^2(\Omega_T))\subseteq C^0_w([0,T];H^1_0(\Omega_T))$ we also obtain $u\in C^0_w([0,T];H^1_0(\Omega_T))$.  \par
To complete the proof that $u$ satisfies Definition \ref{defweaksolu} we prove \eqref{def eq u weak sol}
by passing to the limit in \eqref{eq:weakvn} by means of \eqref{eq:convergence}.
Here, a technical issue is that the spaces of test functions in \eqref{def eq u weak sol} and in \eqref{eq:weakvn} are different. However, given a function $\eta \in L^2(0,T;H^1_0(\Omega_t))$ with $\dot\eta \in L^2(0,T;L^2(\Omega_t))$ and $\eta(T)=\eta(0)=0$, 
we can approximate it by a sequence of smooth functions $\eta^n$ as in (c'):
this readily follows thanks to \eqref{eq:finezza} and concludes the proof of \eqref{def eq u weak sol}.
We finally observe that for every $\bar t\in [0,T)$ the function $u$ is in particular a weak solution of the wave equation in the cylinder $(\bar t,T)\times\Omega_{\bar t}$, and thus it belongs to $C^1_w([\bar t,T];L^2(\Omega_{\bar t}))$.\par
		We are only left to prove the energy inequality \eqref{eq:enineq}. We integrate \eqref{eq:enbaln} between arbitrary times $\alpha,\beta$ with $0\le\alpha\le\beta\le T$. By \eqref{eq:convergence} and standard lower semicontinuity arguments, as $n\to +\infty$ we obtain
		\begin{align*}
			&\quad\, \int_\alpha^\beta\frac 12\norm{\dot{u}(t)}^2_{L^2(\Omega_T)}+\frac 12\norm{\nabla {u}(t)}^2_{L^2(\Omega_T)}\d t\\
			&\le(\beta-\alpha)\left(\frac 12\norm{u_1}^2_{L^2(\Omega_0)}+\frac 12\norm{\nabla {u}_0}^2_{L^2(\Omega_0)}\right)+\int_\alpha^\beta\int_{0}^{t}\scal{f( s)}{\dot{u}( s)}_{L^2(\Omega_T)}\d s\d t.
		\end{align*}
		By the arbitrariness of $\alpha$ and $\beta$, for a.e.\ $t\in[0,T]$ the above inequality yields 
		\begin{equation}\label{eq:enineqae}
			\frac 12\norm{\dot{u}(t)}^2_{L^2(\Omega_T)}+\frac 12\norm{\nabla {u}(t)}^2_{L^2(\Omega_T)}\le\frac 12\norm{u_1}^2_{L^2(\Omega_0)}+\frac 12\norm{\nabla {u}_0}^2_{L^2(\Omega_0)}+\int_{0}^{t}\scal{f( s)}{\dot{u}( s)}_{L^2(\Omega_T)}\d s.
		\end{equation}
We will now improve \eqref{eq:enineqae} by providing an energy inequality valid for every time. We fix $\bar t\in [0,T)$ and consider a sequence $t_k\searrow \bar t$ along which \eqref{eq:enineqae} is satisfied. Since $u\in C^0_w([0,T];H^1_0(\Omega_T))$, $\dot u\in C^0_w([\bar t,T];L^2(\Omega_{\bar t}))$ and $\Omega_{\bar t}\subseteq \Omega_T$, again by weak lower semicontinuity we deduce
		\begin{align*}
			&\quad\,\frac 12\norm{\dot{u}(\bar t\,)}^2_{L^2(\Omega_{\bar t})}+\frac 12\norm{\nabla {u}(\bar t\,)}^2_{L^2(\Omega_{\bar t})}\\
			&\le\liminf\limits_{k\to +\infty}\left(\frac 12\norm{\dot{u}(t_k)}^2_{L^2(\Omega_T)}+\frac 12\norm{\nabla {u}(t_k)}^2_{L^2(\Omega_T)}\right)\\
			&\le \frac 12\norm{u_1}^2_{L^2(\Omega_0)}+\frac 12\norm{\nabla {u}_0}^2_{L^2(\Omega_0)}+\int_{0}^{\bar t}\scal{f( s)}{\dot{u}( s)}_{L^2(\Omega_T)}\d s\\
			&=\frac 12\norm{u_1}^2_{L^2(\Omega_0)}+\frac 12\norm{\nabla {u}_0}^2_{L^2(\Omega_0)}+\int_{0}^{\bar t}\scal{f( s)}{\dot{u}( s)}_{L^2(\Omega_ s)}\d s.
		\end{align*}
Hence, \eqref{eq:enineq} is satisfied for all $\bar t\in [0,T)$. Its validity also in $\bar t=T$ follows by taking a larger final time $\widetilde T>T$, defining for instance $\Omega_t:=\Omega_T$ for $t\in (T,\widetilde T]$ and arguing in the same way.
	\end{proof}
	
	\section{Equivalent reformulation on a fixed domain}\label{sec:fixeddomain}
	
	In this section we recast problem \eqref{eq:u} into a hyperbolic problem in a fixed domain (see \eqref{eq:v} below).
To this end, we adapt the method of diffeomorphisms developed in \cite{Coop,BardCoop, DaPraZol} and employed more recently e.g.\ in \cite{Caponi, DMLuc, ZhouSunLi}.
\par 
	We thus assume \eqref{hypsetreg} and the existence of two functions 
	\begin{equation*}
		\Phi \colon[0,T] \times {\overline{\Omega}_0} \to \R^N, \qquad \Psi\colon\overline{\mc O}\to {\overline{\Omega}_0},
	\end{equation*}
	satisfying
	\begin{subequations}\label{assumptionsdiff}
	\begin{equation}
		\text{$\Phi(t,\Omega_0)=\Omega_t$ and $\Psi(t,\Omega_t)=\Omega_0$ for all $t\in [0,T]$},
	\end{equation}
			\begin{align}
			\label{assumptionPhi}
			\Phi(t,\Psi (t,x)) = x, \quad& \text{ for all } (t,x) \in \overline{\mc O},\\
			\label{assumptionPsi}
			\Psi(t,\Phi (t,y)) = y, \quad&\text{ for all } (t,y)\in [0,T]\times{\overline{\Omega}_0},\\
		\label{eq:phiidentity}
			\Phi(0,y) = y, \quad &\text{ for all } y \in {\overline{\Omega}_0}.
		\end{align}
	\end{subequations}
	We also assume that they fulfil the following assumptions:
	\begin{enumerate}[label=\textup{(H\arabic*)}]
		\item \label{H1} $\Phi,\Psi$ are of class $C^{1,1}$ on their domains of definition;
		\item \label{H2} $|\dot{\Phi}(t,y)|
		< 1$ for every $(t,y) \in [0,T] \times {\overline{\Omega}_0}$.
	\end{enumerate}
	Condition \ref{H2} ensures that the growth speed of the sets $\Omega_t$ is always strictly less than the speed of the travelling waves of problem \eqref{eq:u}; it is crucial in order to guarantee that the transformed problem \eqref{eq:v} is still hyperbolic (see \eqref{coerB}). 
	\begin{rem}\label{rmkdiffbdry}
		We notice that the existence of such diffeomorphisms automatically implies that the set $\mc O$ introduced in \eqref{eq:O} is open and with Lipschitz boundary; furthermore \eqref{bdryO} is valid. See also Lemma~\ref{lemma:diff} and Corollary~\ref{cor:O}.
	\end{rem}
		\begin{rem}\label{rem:A2}
			In the non-homogeneous case depicted in Remark~\ref{rem:nonhomogeneous} the wave speed is no more always equal to one. In this situation condition \ref{H2} rewrites as
			\begin{enumerate}[label=\textup{(H2A)}]
				\item \label{H2A} $|\dot{\Phi}(t,y)|
				< \sqrt{c_A}$ for every $(t,y) \in [0,T] \times {\overline{\Omega}_0}$,
			\end{enumerate}
		where $c_A>0$ is the positive constant appearing in \eqref{eq:ellA}.
		\end{rem}
	
	In Section~\ref{sec:application}, when we study higher regularity of solutions to problem \eqref{eq:u}, we will additionally require:
	\begin{enumerate}[label=\textup{(H1')}]
		\item \label{H1'} $\Phi,\Psi$ are of class $C^{2,1}$ on their domains of definition.
	\end{enumerate}

	In the following lemma we summarize some properties of the diffeomorphisms $\Phi$ and $\Psi$ needed in Theorem \ref{teoequivalenceweaksol} below.
	\begin{lemma}\label{lemma identities Psi Phi}
		Let $\Phi,\Psi $ be as in \eqref{assumptionsdiff} and satisfy \ref{H1}. Then, for almost every $(t,y) \in [0,T]\times{\overline{\Omega}_0}$ the following relations hold:
		\begin{subequations}
		\begin{align}
			\label{eq: DPsi DPhi=I}
			&\bullet\, D \Psi(t,\Phi(t,y)) D \Phi(t,y) = I,\\
			\label{eq: detDPsi detDPhi=1}
			&\bullet\, \det D \Psi(t,\Phi(t,y)) \det D \Phi(t,y) = 1,\\
			\label{eq: dot Psi}
			&\bullet\, \dot{\Psi}(t,\Phi(t,y)) = - D\Psi(t,\Phi(t,y)) \dot{\Phi}(t,y),\\
			\label{eq: nabla (detDPsi detDPhi)=0}
			&\bullet\, \nabla [\det D \Psi(t,\cdot)](\Phi(t,y)) \det D \Phi(t,y) = - \det D \Psi(t,\Phi(t,y)) D\Psi(t,\Phi(t,y))^T  \nabla \det D \Phi(t,y),\\
			&\label{eq: dot (detDPsi detDPhi)=0}\bullet\, \left(\partial_t [\det D \Psi(\cdot,\Phi(t,y))](t)  + \nabla [\det D \Psi(t,\cdot)](\Phi(t,y)) \cdot \dot{\Phi}(t,y) \right) \det D \Phi(t,y)
			\\
			& \quad= - \det D \Psi(t,\Phi(t,y))\, \partial_t \det D \Phi(t,y),\nonumber\\
			 \label{eq: nabla (detDPsi detDPhi) dotPhi =0}
			 &\bullet\, \nabla [\det D \Psi(t,\cdot)](\Phi(t,y)) \cdot \dot{\Phi}(t,y) \det D \Phi(t,y) = \dot{\Psi}(t,\Phi(t,y)) \cdot \nabla\det D \Phi(t,y) \det D \Psi(t,\Phi(t,y)),\\
			 \label{eq: dot detDPsi + div = 0}
			 &\bullet\, \partial_t \det D\Phi(t,y) + \div \left(\dot{\Psi}(t,\Phi(t,y)) \det D\Phi(t,y)  \right)  = 0.
		\end{align}
\end{subequations}
	In particular we notice that
	\begin{equation}\label{eq:detpos}
		\det D\Phi(t,y)>0,\quad\text{for every }(t,y) \in [0,T]\times{\overline{\Omega}_0}.
	\end{equation}
	\end{lemma}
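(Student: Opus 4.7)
The plan is to derive every identity from the two inverse relations \eqref{assumptionPhi}--\eqref{assumptionPsi} by differentiating them in $y$ and in $t$ and successively combining the resulting formulas. By \ref{H1} the first derivatives of $\Phi,\Psi$ are continuous and their second derivatives exist a.e., which is exactly the regularity the a.e.\ formulation requires.

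Identity \eqref{eq: DPsi DPhi=I} is the chain rule applied to $\Psi(t,\Phi(t,y))=y$ differentiated in $y$; taking determinants and using multiplicativity yields \eqref{eq: detDPsi detDPhi=1}. The positivity \eqref{eq:detpos} then follows from three ingredients: $\Phi(0,\cdot)=\mathrm{id}$ via \eqref{eq:phiidentity} gives $\det D\Phi(0,y)=1$; \eqref{eq: detDPsi detDPhi=1} forces $\det D\Phi\neq 0$ everywhere; and by \ref{H1} the function $\det D\Phi$ is continuous on the connected domain $[0,T]\times{\overline{\Omega}_0}$, so a sign change is impossible. Identity \eqref{eq: dot Psi} is the chain rule applied to $\Psi(t,\Phi(t,y))=y$ differentiated in $t$.

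For \eqref{eq: nabla (detDPsi detDPhi)=0}, I would differentiate \eqref{eq: detDPsi detDPhi=1} in $y$, using the chain-rule identity $\nabla_y[\det D\Psi(t,\Phi(t,y))]=D\Phi(t,y)^T\nabla[\det D\Psi(t,\cdot)](\Phi(t,y))$, and then multiply from the left by $D\Psi(t,\Phi)^T=(D\Phi^T)^{-1}$ (provided by \eqref{eq: DPsi DPhi=I}) to isolate the gradient term. Identity \eqref{eq: dot (detDPsi detDPhi)=0} is the analogue in $t$: differentiating \eqref{eq: detDPsi detDPhi=1} in time, the chain rule splits the total $t$-derivative of $\det D\Psi(t,\Phi(t,y))$ into the partial-in-time contribution and the $\nabla\cdot\dot\Phi$ contribution. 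Then \eqref{eq: nabla (detDPsi detDPhi) dotPhi =0} is obtained by taking the scalar product of \eqref{eq: nabla (detDPsi detDPhi)=0} with $\dot\Phi(t,y)$ and invoking \eqref{eq: dot Psi} to replace $D\Psi(t,\Phi)\dot\Phi$ by $-\dot\Psi(t,\Phi)$.

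The most delicate identity is \eqref{eq: dot detDPsi + div = 0}. My plan is to apply Jacobi's formula $\partial_t\det A=\det A\cdot\tr(A^{-1}\partial_t A)$ to $D\Psi(t,x)$ at fixed $x$ and evaluate at $x=\Phi(t,y)$; using \eqref{eq: DPsi DPhi=I}, the cyclic property of the trace, and $\div_y[\dot\Psi(t,\Phi)]=\tr[D\dot\Psi(t,\Phi)\,D\Phi]$, this gives $\det D\Phi\cdot\div_y[\dot\Psi(t,\Phi)]=(\det D\Phi)^2\cdot\partial_t\det D\Psi(t,\Phi)$, where on the right the time derivative is at fixed spatial slot. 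Combining with the Leibniz expansion $\div_y[\dot\Psi(t,\Phi)\det D\Phi]=\det D\Phi\,\div_y[\dot\Psi(t,\Phi)]+\dot\Psi(t,\Phi)\cdot\nabla\det D\Phi$ and using \eqref{eq: nabla (detDPsi detDPhi) dotPhi =0} on the second summand, the right-hand side collapses to $(\det D\Phi)^2\cdot\tfrac{d}{dt}[\det D\Psi(t,\Phi(t,y))]$; by \eqref{eq: dot (detDPsi detDPhi)=0} and \eqref{eq: detDPsi detDPhi=1} this equals $-\partial_t\det D\Phi$, yielding the claim. The main bookkeeping obstacle will be carefully distinguishing the partial-in-$t$ derivative of $\det D\Psi$ at a fixed spatial point from the total derivative along the curve $t\mapsto\Phi(t,y)$, and tracking transpositions consistently under the paper's convention that $\nabla$ of a scalar is a column vector while the rows of $Df$ are the gradients of the components of $f$.
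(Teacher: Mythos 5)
Your proposal is correct, and for all identities except the last one it coincides with the paper's proof: \eqref{eq: DPsi DPhi=I} and \eqref{eq: dot Psi} by differentiating \eqref{assumptionPsi} in $y$ and $t$, \eqref{eq: detDPsi detDPhi=1} and \eqref{eq:detpos} by multiplicativity of the determinant plus continuity and \eqref{eq:phiidentity}, and \eqref{eq: nabla (detDPsi detDPhi)=0}--\eqref{eq: nabla (detDPsi detDPhi) dotPhi =0} by differentiating \eqref{eq: detDPsi detDPhi=1} in $y$ and $t$ and contracting with $\dot\Phi$. For \eqref{eq: dot detDPsi + div = 0} your route is a genuine (but equally valid) variant: you apply Jacobi's formula to $D\Psi(t,x)$ at fixed $x$, convert the trace into $\div_y[\dot\Psi(t,\Phi)]$ via \eqref{eq: DPsi DPhi=I} and cyclicity, and then recombine with the already-proven identities \eqref{eq: nabla (detDPsi detDPhi) dotPhi =0}, \eqref{eq: dot (detDPsi detDPhi)=0} and \eqref{eq: detDPsi detDPhi=1}; the paper instead starts from the vanishing of $\tr\bigl[\tfrac{\d}{\d t}\bigl(D\Phi(t,\Psi(t,x))D\Psi(t,x)\bigr)\bigr]$ (the Jacobian of the identity map), expands it in components, substitutes $x=\Phi(t,y)$ and applies Jacobi's formula to $M(t)=D\Phi(t,y)$. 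Your version buys a cleaner reuse of the intermediate identities at the cost of slightly heavier bookkeeping with the reciprocal determinants; the paper's version is more self-contained but requires the componentwise expansion. Both rely only on the $C^{1,1}$ regularity of \ref{H1}, which, as you note, is exactly what makes the second derivatives exist a.e.\ and justifies the a.e.\ formulation.
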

	\begin{proof}
		Relations \eqref{eq: DPsi DPhi=I} and \eqref{eq: dot Psi} simply follow by differentiating \eqref{assumptionPsi} with respect to $y$ and $t$, respectively. Then, \eqref{eq: DPsi DPhi=I} easily implies \eqref{eq: detDPsi detDPhi=1} and \eqref{eq:detpos} by \eqref{eq:phiidentity}. Moreover, differentiating the identity \eqref{eq: detDPsi detDPhi=1} with respect to $y$ and using \eqref{eq: DPsi DPhi=I}, one obtains \eqref{eq: nabla (detDPsi detDPhi)=0}. Similarly, differentiating the identity \eqref{eq: detDPsi detDPhi=1} with respect to $t$ on gets \eqref{eq: dot (detDPsi detDPhi)=0}. Multiplying both sides of \eqref{eq: nabla (detDPsi detDPhi)=0} by $\dot{\Phi}(t,y)$ and using \eqref{eq: dot Psi}, one also deduces \eqref{eq: nabla (detDPsi detDPhi) dotPhi =0}. Finally, we prove \eqref{eq: dot detDPsi + div = 0}: by \eqref{assumptionPhi}, for a.e.\ $t\in[0,T]$ and $x\in {\overline{\Omega}_t}$ there holds
		\begin{equation*}
			\tr \left[ \frac{\d}{\d t} \left( D \Phi(t,\Psi (t,x)) \right) \right] = 0.
		\end{equation*}
		The above identity can be written in components as
		\begin{align*}
			\sum_{i,j=1}^{N} \partial_j \dot{\Phi}_i(t,\Psi(t,x)) \partial_i \Psi_j(t,x) &+ \sum_{i,j=1}^{N} \partial_j \Phi_i(t,\Psi(t,x)) \partial_i \dot{\Psi}_j(t,x) 
			\\
			&+ \sum_{i,j,k=1}^{N} \partial_k\partial_j \Phi_i(t,\Psi(t,x)) \dot{\Psi}_k(t,x) \partial_i \Psi_j(t,\Psi(t,x))=0.
		\end{align*}
		Setting $x = \Phi(t,y)$, we now get
		\begin{align*}
			\sum_{i,j=1}^{N} \partial_j \dot{\Phi}_i(t,y) \partial_i \Psi_j(t,\Phi(t,y)) &+ \sum_{i,j=1}^{N} \partial_j \Phi_i(t,y) \partial_i \dot{\Psi}_j(t,\Phi(t,y)) 
			\\
			&+ \sum_{i,j,k=1}^{N} \partial_k\partial_j \Phi_i(t,y) \dot{\Psi}_k(t,\Phi(t,y)) \partial_i \Psi_j(t,y)=0.
		\end{align*}
		Finally, we multiply the previous equality by $\det D \Phi(t,y)$ and apply the following Jacobi identity
		\begin{equation*}
			\partial_t\det M(t)=\det M(t)\tr\left[M(t)^{-1}\partial_t M(t)\right],
		\end{equation*}
		with $M(t)=D\Phi(t,y)$. Thus we deduce \eqref{eq: dot detDPsi + div = 0}.
	\end{proof}
	
	Given a weak solution $u$ of problem \eqref{eq:u}, we now consider the auxiliary function
	\begin{subequations}
	\begin{equation}\label{def eq v}
		v(t,y) := u(t, \Phi(t,y)), \quad \text{ for all } (t,x) \in [0,T]\times {\overline{\Omega}_0}.
	\end{equation}
	Equivalently,
	\begin{equation}\label{def eq u}
		u(t,x) = v(t, \Psi(t,x))  \quad \text{ for all } (t,x)\in \overline{\mc O}.
	\end{equation}
\end{subequations}
	This change of variables yields to the following problem with fixed domain:
	\begin{equation}\label{eq:v}
		\begin{cases}
			\ddot{v} - \div (B \nabla v) + a \cdot \nabla v - 2b \cdot \nabla \dot{v} = g , &\text{in } (0,T) \times \Omega_0,\\
			v=0, &\text{in }(0,T)\times\partial\Omega_0,\\
			v(0)=v_0, \\
			\dot{v}(0)=v_1,
		\end{cases}
	\end{equation}
	whose coefficients are given by
	\begin{subequations}\label{expcof}
	\begin{align}
		\label{eq:B}
		B(t,y) &:= D \Psi(t,\Phi(t,y)) D \Psi(t,\Phi(t,y))^T - \dot{\Psi}(t,\Phi(t,y)) \otimes  \dot{\Psi}(t,\Phi(t,y)),
		\\
		\label{eq:a}
		a(t,y) &:= - \left\{ B(t,y)^T \nabla \det D\Phi(t,y) +\partial_t \Big[ b(t,y) \det D\Phi(t,y) \Big]\right\} \,\det D\Psi(t,\Phi(t,y)),
		\\
		\label{eq:b}
		b(t,y) &:= - \dot{\Psi}(t,\Phi(t,y)),
	\end{align}	
the forcing term is
\begin{equation}\label{eq:g}
	g(t,y) := f(t,\Phi(t,y)),
\end{equation}
	and the initial data are defined by
	\begin{equation}\label{datav}
		v_0 := u_0, \quad v_1 := u_1 + \dot{\Phi}(0,\cdot) \cdot \nabla u_0 .
	\end{equation}
\end{subequations}
	\begin{rem}\label{rem:A3}
		If the equation under study is \eqref{eq:uA}, the only change in the new coefficients is given by
		\begin{equation*}
			B(t,y) = D \Psi(t,\Phi(t,y)) A(t,\Phi(t,y)) D \Psi(t,\Phi(t,y))^T - \dot{\Psi}(t,\Phi(t,y)) \otimes  \dot{\Psi}(t,\Phi(t,y)).
		\end{equation*}
		We also refer to \cite[Equation (2.29)]{DMLuc} for a comparison.
	\end{rem}
	The following proposition shows the regularity of the new data.
	
	\begin{prop}\label{propregularitycoeff}
		 Assume \eqref{hypsetreg}, \eqref{eq:data} and let $\Phi,\Psi$ be as in \eqref{assumptionsdiff} and satisfy \ref{H1}. Let relations \eqref{expcof} hold. Then
		\begin{subequations}\label{regcof}
		\begin{align}
			& B \in C^{0,1}([0,T] \times {\overline{\Omega}_0}; \R^{N\times N}_{\rm sym}),\label{regcofB}
			\\
			& a \in L^{\infty}([0,T] \times {\overline{\Omega}_0}; \R^N),\label{regcofa}
			\\
			& b \in C^{0,1}([0,T] \times {\overline{\Omega}_0};\R^N),\label{regcofb}\\
			&g\in L^2((0,T)\times \Omega_0),\\
			&v_0\in H^1_0(\Omega_0),\quad v_1\in L^2(\Omega_0).
		\end{align}
	\end{subequations}
		Moreover, if also \ref{H2} is satisfied, then $B$ is uniformly elliptic, i.e., there exists a positive costant $c_B>0$ such that for every $(t,y) \in [0,T]\times {\overline{\Omega}_0}$ one has
		\begin{equation}\label{coerB}
			(B(t,y) w) \cdot w \geq c_B |w|^2, \quad \text{ for all } w \in \R^N.
		\end{equation}
	If in addition $f\in H^1(\mc O)$ and \ref{H1'} is fulfilled, then there hold
	\begin{subequations}\label{moreregcof}
\begin{align}
	& B \in C^{1,1}([0,T] \times {\overline{\Omega}_0}; \R^{N\times N}_{\rm sym}),\label{moreregB}
	\\
	& a \in C^{0,1}([0,T] \times {\overline{\Omega}_0};\R^N),\label{morerega}
	\\
	& b \in C^{1,1}([0,T] \times {\overline{\Omega}_0};\R^N),\label{moreregb}\\
	&  g\in H^1(0,T;L^2(\Omega_0)).\label{moreregg}
\end{align}
	
\end{subequations}
	\end{prop}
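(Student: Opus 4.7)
The plan is to address the seven claimed regularities separately and in order, observing that each is essentially a product/composition rule for $C^{k,1}$ functions combined with the identities in Lemma \ref{lemma identities Psi Phi}. Under \ref{H1} the maps $D\Psi$ and $\dot\Psi$ are Lipschitz on $\overline{\mc O}$, and $(t,y)\mapsto(t,\Phi(t,y))$ is Lipschitz from $[0,T]\times\overline{\Omega}_0$ into $\overline{\mc O}$; since compositions and pointwise products/tensor products of Lipschitz functions on compact sets are Lipschitz, one obtains $B\in C^{0,1}$ and $b\in C^{0,1}$, giving \eqref{regcofB} and \eqref{regcofb}. For $a$ in \eqref{regcofa}, I expand $\partial_t[b\det D\Phi]$: the product $b(t,y)\det D\Phi(t,y)$ is jointly Lipschitz, so its distributional time derivative lies in $L^\infty$; similarly $B^T\nabla\det D\Phi$ is the product of the Lipschitz matrix $B$ with the $L^\infty$ vector $\nabla\det D\Phi$, and $\det D\Psi\circ\Phi$ is Lipschitz, hence bounded. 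So the whole expression is in $L^\infty$.

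The bounds on $g$ and the initial data come from a change of variables on $\Omega_t$: by Lemma \ref{lemma identities Psi Phi} and assumption \ref{H1} the Jacobian $\det D\Psi(t,x)$ is continuous and positive on the compact set $\overline{\mc O}$, hence uniformly bounded away from $0$ and $\infty$, and
\[
\|g\|^2_{L^2((0,T)\times\Omega_0)} = \int_0^T\!\!\int_{\Omega_t}|f(t,x)|^2\det D\Psi(t,x)\d x\d t \le C\|f\|^2_{L^2(\mc O)};
\]
combining $v_1=u_1+\dot\Phi(0,\cdot)\cdot\nabla u_0$ with $\dot\Phi(0,\cdot)\in L^\infty(\Omega_0;\R^N)$ and $u_1,\nabla u_0\in L^2(\Omega_0)$ yields $v_1\in L^2(\Omega_0)$, while $v_0=u_0\in H^1_0(\Omega_0)$ is immediate. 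For the ellipticity \eqref{coerB}, a direct computation gives $Bw\cdot w=|D\Psi^T w|^2-(\dot\Psi\cdot w)^2$; using \eqref{eq: dot Psi} to write $\dot\Psi\cdot w=-\dot\Phi\cdot(D\Psi^T w)$ and Cauchy-Schwarz, one obtains $Bw\cdot w\ge(1-|\dot\Phi|^2)|D\Psi^T w|^2$. Assumption \ref{H2} together with compactness of $[0,T]\times\overline{\Omega}_0$ makes $1-|\dot\Phi|^2$ bounded below by some $c_1>0$, while continuous invertibility of $D\Psi$ on $\overline{\mc O}$ (ensured by \ref{H1} together with \eqref{eq:detpos} and \eqref{eq: DPsi DPhi=I}) gives $|D\Psi^T w|\ge c_2|w|$ for some $c_2>0$; setting $c_B:=c_1c_2^2$ concludes this part.

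The higher-regularity claims \eqref{moreregB}, \eqref{morerega}, \eqref{moreregb} follow by repeating verbatim the above arguments with $C^{0,1}$ replaced by $C^{1,1}$, using now that under \ref{H1'} the first derivatives of $\Phi$ and $\Psi$ are themselves Lipschitz, so their compositions and products retain $C^{1,1}$ regularity (for $a$ one loses one derivative because of $\nabla\det D\Phi$ and $\partial_t[b\det D\Phi]$, landing precisely in $C^{0,1}$). The only nontrivial point I foresee is \eqref{moreregg}, since $f\in H^1(\mc O)$ is not classically differentiable: the natural candidate is
\[
\dot g(t,y) = \dot f(t,\Phi(t,y)) + \nabla f(t,\Phi(t,y))\cdot\dot\Phi(t,y),
\]
and I plan to justify this identity by approximation. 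Since $\mc O$ is open with Lipschitz boundary (Remark \ref{rmkdiffbdry}), $C^\infty(\overline{\mc O})$ is dense in $H^1(\mc O)$; for smooth $f$ the above identity is the classical chain rule, and its right-hand side is controlled in $L^2((0,T)\times\Omega_0)$ by the change-of-variables bound applied to $\dot f$ and $\nabla f$ separately, using also that $\dot\Phi\in L^\infty$. Passing to the limit on a smooth approximating sequence yields both the pointwise a.e.\ identity and the estimate $\|\dot g\|_{L^2((0,T)\times\Omega_0)}\le C\|f\|_{H^1(\mc O)}$. This density/limit step is the main obstacle; once it is in place, everything else reduces to bookkeeping of Lipschitz products.
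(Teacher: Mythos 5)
Your proof is correct and takes essentially the same approach as the paper, whose own proof merely states that the regularities \eqref{regcof} and \eqref{moreregcof} follow directly from the explicit expressions \eqref{expcof} together with \ref{H1} and \ref{H1'}, and refers to an external lemma for the ellipticity \eqref{coerB}. You simply fill in the routine Lipschitz/product/composition bookkeeping that the paper omits, and your direct Cauchy--Schwarz computation for \eqref{coerB} (writing $\dot\Psi\cdot w=-\dot\Phi\cdot(D\Psi^T w)$ via \eqref{eq: dot Psi} and using compactness with \ref{H2}) is the standard argument behind the cited reference.
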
 
	
	\begin{proof}
	 Regularity properties \eqref{regcof} and \eqref{moreregcof} directly follow from the explicit expressions \eqref{expcof} together with \ref{H1} and \ref{H1'}, respectively. For the ellipticity property \eqref{coerB} we refer to \cite[Lemma B.3]{LazMolSol}.
	\end{proof}
	
	To deal with problem \eqref{eq:v} we introduce two equivalent notions of solution (see Proposition~\eqref{propequivweaksolv}), whose terminology is consistent with the one introduced in \cite{DMToad}. Definition~\ref{defweaksolv} is the analogue of Definition~\ref{defweaksolu} in the current context, while Definition~\ref{defweakspacesolv} does not involve integration by parts in time (as classical in the analysis of hyperbolic problems, see for instance the textbooks \cite{Evans,LionsMage}). The first notion is useful to show the equivalence between problem \eqref{eq:u} in a moving domain and problem \eqref{eq:v} in a fixed domain. The second notion is more suited to the Galerkin method and will be used in Section~\ref{sec:hyperbolic} to obtain higher regularity.
	
	\begin{defin}\label{defweaksolv}
		We say that $v\colon [0,T] \times {\overline{\Omega}_0} \to \R$ is a weak solution of problem \eqref{eq:v} with data \eqref{regcof} if
		\begin{enumerate}
			\item[(i)] $v \in L^2(0,T; H^1_0(\Omega_0))$ and $\dot{v} \in L^2(0,T; L^2(\Omega_0))$;
			\item[(ii)] $v(0)=v_0$ in the sense of $C^0([0,T]; L^2(\Omega_0))$ and $\dot{v}(0)=v_1$ in the sense of $C^0([0,T]; H^{-1}(\Omega_0))$;
			\item[(iii)]  $v$ satisfies
			\begin{equation}\label{def eq v weak sol}
				\begin{aligned}
					&\quad-\int_{0}^{T} \langle \dot{v}(t), \dot{\xi}(t) \rangle_{L^2(\Omega_0)} \d t + \int_{0}^{T} \langle B(t) \nabla v(t), \nabla \xi(t) \rangle_{L^2(\Omega_0)}\d t 
					\\
					&\quad+ \int_{0}^{T} \langle a(t) \cdot \nabla v(t), \xi(t) \rangle_{L^2(\Omega_0)}\d t
					+2 \int_{0}^{T}\langle \dot{v}(t), \div(b(t) \xi(t)) \rangle_{L^2(\Omega_0)}\d t
					\\
					&	= \int_{0}^{T} \langle g(t), \xi(t) \rangle_{L^2(\Omega_0)}\d t,
				\end{aligned}
			\end{equation}
			for every $\xi \in L^2(0,T; H^1_0(\Omega_0)) \cap H^1_0(0,T; L^2(\Omega_0))$.
		\end{enumerate}
	\end{defin}

	\begin{defin}\label{defweakspacesolv}
		We say that $v\colon[0,T] \times {\overline{\Omega}_0} \to \R$ is a strong-weak solution of problem \eqref{eq:v}  with data \eqref{regcof} if
		\begin{enumerate}
			\item[(i)] $v \in L^2(0,T; H^1_0(\Omega_0))$, $\dot{v} \in L^2(0,T; L^2(\Omega_0))$, and $\ddot{v} \in L^2(0,T; H^{-1}(\Omega_0))$;
			\item[(ii)] $v(0)=v_0$ in the sense of $C^0([0,T]; L^2(\Omega_0))$ and $\dot{v}(0)=v_1$ in the sense of $C^0([0,T]; H^{-1}(\Omega_0))$;
			\item[(iii)] $v$ satisfies 
			\begin{equation}\label{def eq v weak space sol}
				\begin{aligned}
					&\langle\ddot{v}(t), \phi\rangle _{H^1_0(\Omega_0)} + \langle B(t) \nabla v(t), \nabla \phi \rangle_{L^2(\Omega_0)}
					+ \langle a(t) \cdot \nabla v(t), \phi \rangle_{L^2(\Omega_0)}
					+2 \langle \dot{v}(t), \div(b(t) \phi) \rangle_{L^2(\Omega_0)}
					\\
					=& \langle g(t), \phi \rangle_{L^2(\Omega_0)},
				\end{aligned}
			\end{equation}
			for a.e. $t \in [0,T]$ and for every $\phi \in H^1_0(\Omega_0)$.
		\end{enumerate}
	\end{defin}
	The next proposition shows the equivalence between the two notions of solution just introduced.
	\begin{prop}\label{propequivweaksolv}
		Assume \eqref{regcof}. Then a function $v$ is a weak solution of problem \eqref{eq:v} in the sense of Definition \ref{defweaksolv} if and only if it is a strong-weak solution in the sense of Definition \ref{defweakspacesolv}.
	\end{prop}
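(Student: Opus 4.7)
The plan is to prove the two implications separately; the direction strong-weak $\Rightarrow$ weak is the easier one. Given $v$ as in Definition \ref{defweakspacesolv} and any admissible test function $\xi \in L^2(0,T;H^1_0(\Omega_0)) \cap H^1_0(0,T;L^2(\Omega_0))$, I would apply \eqref{def eq v weak space sol} with $\phi=\xi(t)$ for a.e.\ $t$ and then integrate over $[0,T]$. The terms involving $B\nabla v$, $a\cdot\nabla v$, $\div(b\,\cdot)$ and $g$ reproduce exactly those in \eqref{def eq v weak sol}. The only nontrivial step is the first term, which I handle via the standard integration-by-parts identity for functions in $L^2(0,T;L^2(\Omega_0))$ whose distributional time derivative lies in $L^2(0,T;H^{-1}(\Omega_0))$:
\begin{equation*}
\int_0^T \langle \ddot v(t),\xi(t)\rangle_{H^1_0(\Omega_0)}\,\d t = -\int_0^T \langle \dot v(t),\dot\xi(t)\rangle_{L^2(\Omega_0)}\,\d t,
\end{equation*}
where the boundary terms vanish thanks to $\xi(0)=\xi(T)=0$. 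This yields \eqref{def eq v weak sol}.

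For the converse, given $v$ as in Definition \ref{defweaksolv}, I test \eqref{def eq v weak sol} against separated functions $\xi(t,y)=\psi(t)\phi(y)$ with $\phi\in H^1_0(\Omega_0)$ and $\psi\in C^\infty_c(0,T)$, which are clearly admissible. Denoting
\begin{equation*}
F(t,\phi) := \langle g(t),\phi\rangle - \langle B(t)\nabla v(t),\nabla\phi\rangle - \langle a(t)\cdot\nabla v(t),\phi\rangle - 2\langle \dot v(t),\div(b(t)\phi)\rangle,
\end{equation*}
the weak formulation rewrites as $-\int_0^T \langle \dot v(t),\phi\rangle\,\dot\psi(t)\,\d t = \int_0^T F(t,\phi)\psi(t)\,\d t$, which states that the distributional time derivative of the scalar $t\mapsto \langle\dot v(t),\phi\rangle$ equals $F(\cdot,\phi)$. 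Using the regularity of the coefficients \eqref{regcofB}--\eqref{regcofb}, I estimate
\begin{equation*}
|F(t,\phi)| \le C\bigl(\|\nabla v(t)\|_{L^2(\Omega_0)} + \|\dot v(t)\|_{L^2(\Omega_0)} + \|g(t)\|_{L^2(\Omega_0)}\bigr)\,\|\phi\|_{H^1_0(\Omega_0)},
\end{equation*}
with $C$ depending only on the $L^\infty$-norms of $B$, $a$, $b$ and $\div b$. Hence for a.e.\ $t$ the map $\phi\mapsto F(t,\phi)$ defines an element $w(t)\in H^{-1}(\Omega_0)$ with $\|w(t)\|_{H^{-1}(\Omega_0)}\in L^2(0,T)$; separability of $H^1_0(\Omega_0)$ (Pettis' theorem) ensures Bochner measurability of $w$, so $w\in L^2(0,T;H^{-1}(\Omega_0))$. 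The distributional identity then upgrades to $w=\ddot v$ in the sense of $H^{-1}$-valued distributions, and the pointwise identity $\langle \ddot v(t),\phi\rangle=F(t,\phi)$, once rearranged, is precisely \eqref{def eq v weak space sol}.

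The main obstacle is the converse direction: promoting the distributional relation to a genuine $L^2(0,T;H^{-1}(\Omega_0))$ identity holding for a.e.\ $t$. This relies crucially on the full regularity \eqref{regcof}, and in particular on $b\in C^{0,1}$ so that both $b$ and $\div b$ are essentially bounded, which is what makes $F(t,\cdot)$ continuous on $H^1_0(\Omega_0)$ with $L^2$-in-time constants. The measurability step, although not deep, must be treated with some care via Pettis' theorem and the separability of $H^1_0(\Omega_0)$. Once $\ddot v\in L^2(0,T;H^{-1}(\Omega_0))$ is in hand, property (ii) is identical in both definitions and no further argument on the initial conditions is needed.
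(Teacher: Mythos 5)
Your proof is correct, and the forward implication (strong-weak $\Rightarrow$ weak) is identical to the paper's: test with $\phi=\xi(t)$, integrate in time, and use the standard integration-by-parts formula for $\dot v\in L^2(0,T;L^2(\Omega_0))$ with $\ddot v\in L^2(0,T;H^{-1}(\Omega_0))$. For the converse, you reach the same conclusion but by a genuinely different mechanism for the time-localization step. The paper first obtains $\ddot v\in L^2(0,T;H^{-1}(\Omega_0))$ abstractly, by bounding $|\langle\ddot v,\xi\rangle|$ via the estimates \eqref{ineq:ineq} and invoking the density of $L^2(0,T;H^1_0(\Omega_0))\cap H^1_0(0,T;L^2(\Omega_0))$ in $L^2(0,T;H^1_0(\Omega_0))$; it then recovers the a.e.-in-$t$ identity \eqref{def eq v weak space sol} through a Lebesgue-point argument, testing \eqref{eq v weak sol prop equiv} with $\frac1h\phi_n\chi_{[t,t+h]}$ for a countable dense family $\{\phi_n\}\subseteq H^1_0(\Omega_0)$ and letting $h\to0^+$ outside a common null set. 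You instead build the candidate $w(t):=F(t,\cdot)\in H^{-1}(\Omega_0)$ directly from the right-hand side, obtain $w\in L^2(0,T;H^{-1}(\Omega_0))$ via the same coefficient bounds plus Pettis' theorem, and identify $\ddot v=w$ from the scalar distributional identities $\frac{\d}{\d t}\langle\dot v(t),\phi\rangle=F(t,\phi)$ obtained with separated test functions $\psi(t)\phi(y)$. Your route is slightly slicker in that it sidesteps the quantifier exchange (``for each $\phi$, a.e.\ $t$'' versus ``for a.e.\ $t$, all $\phi$'') that forces the paper to introduce the countable dense family: since $w(t)$ is defined for a.e.\ $t$ as a single functional satisfying the identity for all $\phi$ simultaneously, the pointwise statement follows at once from $\ddot v(t)=w(t)$ a.e. The price is that you must justify Bochner measurability of $w$ and the passage from scalar to vector-valued distributional derivatives, both standard but worth stating; the paper's Lebesgue-point argument is more hands-on and avoids any appeal to measurability theorems. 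Both arguments use exactly the regularity \eqref{regcof} in the same places, so the two proofs are of comparable strength.
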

	
	\begin{proof}
		First assume that $v$ is a strong-weak solution and fix  a function $\xi \in L^2(0,T; H^1_0(\Omega_0)) \cap H^1_0(0,T; L^2(\Omega_0)) $. Then $\xi(t) \in H^1_0(\Omega_0)$ for almost every $t \in [0,T]$ and \eqref{def eq v weak space sol} holds with $\phi = \xi(t)$. Integrating by parts in time we obtain that
		\begin{align*}
			\int_{0}^{T} \langle g(t), \xi(t) \rangle_{L^2(\Omega_0)}\d t = & -\int_{0}^{T} \langle\dot{v}(t), \dot{\xi}(t))\rangle_{L^2(\Omega_0)} \d t + \int_{0}^{T} \langle B(t) \nabla v(t), \nabla \xi(t) \rangle_{L^2(\Omega_0)}\d t 
			\\
			& + \int_{0}^{T} \langle a(t) \cdot \nabla v(t), \xi(t) \rangle_{L^2(\Omega_0)}\d t
			+2 \int_{0}^{T}\langle \dot{v}(t), \div(b(t) \xi(t)) \rangle_{L^2(\Omega_0)}\d t,
		\end{align*}
		and so we conclude that $v$ is a weak solution.
		
		We now prove the reverse implication. Let $v$ be a weak solution; we first prove that $\ddot{v}$ belongs to $L^2(0,T;H^{-1}(\Omega_0))$. Since $\dot{v} \in L^2(0,T;L^2(\Omega_0))$, a priori we know that $\ddot{v} \in H^{-1}(0,T;L^2(\Omega_0))$ as a distributional derivative. By definition it acts in the following way:
		\begin{equation*}
			\langle\ddot{v}, \xi \rangle_{H^1_0(0,T;L^2(\Omega_0))} = -\int_{0}^{T} \langle \dot{v}(t), \dot{\xi}(t) \rangle_{L^2(\Omega_0)} \d t \qquad \text{ for all } \xi \in H^1_0(0,T;L^2(\Omega_0)).
		\end{equation*}
		We now fix $\xi \in L^2(0,T;H^1_0(\Omega_0)) \cap H^1_0(0,T;L^2(\Omega_0))$, hence \eqref{def eq v weak sol} states that
		\begin{equation}\label{eq:equation}
		\begin{aligned}
			-\int_{0}^{T} \langle \dot{v}(t), \dot{\xi}(t) \rangle_{L^2(\Omega_0)} \d t =& -\int_{0}^{T} \langle B(t) \nabla v(t), \nabla \xi(t) \rangle_{L^2(\Omega_0)}\d t 
			- \int_{0}^{T} \langle a(t) \cdot \nabla v(t), \xi(t) \rangle_{L^2(\Omega_0)}\d t
			\\
			&-2 \int_{0}^{T}\langle \dot{v}(t), \div(b(t) \xi(t)) \rangle_{L^2(\Omega_0)}\d t  
			+ \int_{0}^{T} \langle g(t), \xi(t) \rangle_{L^2(\Omega_0)}\d t.
		\end{aligned}
\end{equation}
		Due to \eqref{regcof}, by developing the divergence term
		\begin{equation*}
			\div(b(t)\xi(t)) = \div(b(t)) \, \xi(t) + b(t) \cdot \nabla \xi(t),    
		\end{equation*}
		we conclude that the following inequalities hold:
		\begin{subequations}\label{ineq:ineq}
		\begin{align}
			\label{ineq ddot v regul 1}
			\left| \int_{0}^{T} \langle B(t) \nabla v(t), \nabla \xi(t) \rangle_{L^2(\Omega_0)}\d t \right| &\leq \norm{B}_{L^\infty((0,T) \times \Omega_0)} \norm{\nabla v}_{L^2(0,T;L^2(\Omega_0))} \norm{\nabla\xi}_{L^2(0,T;L^2(\Omega_0))},
			\\
			\label{ineq ddot v regul 2}
			\left| \int_{0}^{T} \langle a(t) \cdot \nabla v(t), \xi(t) \rangle_{L^2(\Omega_0)}\d t \right| &\leq \norm{a}_{L^\infty((0,T) \times \Omega_0)}
			\norm{\nabla v}_{L^2(0,T;L^2(\Omega_0))} \norm{\xi}_{L^2(0,T;L^2(\Omega_0))},
			\\
			\nonumber
			\left| \int_{0}^{T}\langle \dot{v}(t), \div(b(t) \xi(t)) \rangle_{L^2(\Omega_0)}\d t  \right| &\leq
			\left(\norm{\div(b)}_{L^\infty((0,T) \times \Omega_0)} + \norm{b}_{L^\infty((0,T) \times \Omega_0)}\right)
			\\
			\label{ineq ddot v regul 3}
			&\quad \times\norm{\dot{v}}_{L^2(0,T;L^2(\Omega_0))}
			\norm{\xi}_{L^2(0,T;H^1_0(\Omega_0))},
			\\
			\label{ineq ddot v regul 4}
			\left| \int_{0}^{T} \langle g(t), \xi(t) \rangle_{L^2(\Omega_0)}\d t \right| &\leq \norm{g}_{L^2(0,T;L^2(\Omega_0))} \norm{\xi}_{L^2(0,T;L^2(\Omega_0))}.
		\end{align}
\end{subequations}
		Hence there exists a constant $C>0$ such that
		\begin{equation*}
			|\langle\ddot{v}, \xi\rangle_{H^1_0(0,T;L^2(\Omega_0))}| \leq C \norm{\xi}_{L^2(0,T;H^1_0(\Omega_0))}.
		\end{equation*}
	By density of $L^2(0,T;H^1_0(\Omega_0)) \cap H^1_0(0,T;L^2(\Omega_0))$ in $L^2(0,T;H^1_0(\Omega_0))$, we conclude that $\ddot{v}$ is in $L^2(0,T; H^{-1}(\Omega_0))$.
		
	Employing an integration by parts in time in \eqref{eq:equation}, which now is allowed, we then deduce that
		\begin{equation}\label{eq v weak sol prop equiv}
			\begin{split}
				\int_{0}^{T} \langle g(t), \xi(t) \rangle_{L^2(\Omega_0)}\d t = & \int_{0}^{T} \langle\ddot{v}(t), \xi(t)\rangle_{H^1_0(\Omega_0)} \d t + \int_{0}^{T} \langle B(t) \nabla v(t), \nabla \xi(t) \rangle_{L^2(\Omega_0)}\d t
				\\
				& + \int_{0}^{T} \langle a(t) \cdot \nabla v(t), \xi(t) \rangle_{L^2(\Omega_0)}\d t
				+2 \int_{0}^{T}\langle \dot{v}(t), \div(b(t) \xi(t)) \rangle_{L^2(\Omega_0)}\d t,
			\end{split}
		\end{equation}
		for every $\xi \in L^2(0,T;H^1_0(\Omega_0))$. Now let $\{ \phi_n \}_{n \in \N} \subseteq H^1_0(\Omega_0)$ be a countable dense subset of $H^1_0(\Omega_0)$, and consider, for $n \in \N$ and $t\in [0,T]$, the following properties
		\begin{subequations}\label{list}
		\begin{align}
			&\bullet\, \lim\limits_{h \to 0^+} \int_{t}^{t+h}\langle\ddot{v}(s),\phi_n\rangle_{H^1_0(\Omega_0)} \d s =  \langle\ddot{v}(t), \phi_n\rangle_{H^1_0(\Omega_0)},
			\label{eqincrementalquotients1}
			\\
			&\bullet\, \lim\limits_{h \to 0^+} \int_{t}^{t+h}\langle B(s) \nabla v(s), \nabla \phi_n \rangle_{L^2(\Omega_0)} \d s = \langle B(t) \nabla v(t), \nabla \phi_n \rangle_{L^2(\Omega_0)},
			\label{eqincrementalquotients2}
			\\
			&\bullet\, \lim\limits_{h \to 0^+} \int_{t}^{t+h}\langle a(s) \cdot \nabla v(s), \phi_n \rangle_{L^2(\Omega_0)}\d s =  \langle a(t) \cdot \nabla v(t), \phi_n \rangle_{L^2(\Omega_0)},
			\label{eqincrementalquotients3}
			\\
			&\bullet\, \lim\limits_{h \to 0^+} \int_{t}^{t+h}\langle \dot{v}(s), \div(b(s) \phi_n) \rangle_{L^2(\Omega_0)}\d s =  \langle \dot{v}(t), \div(b(t) \phi_n) \rangle_{L^2(\Omega_0)},
			\label{eqincrementalquotients4}
			\\
			&\bullet\, \lim\limits_{h \to 0^+} \int_{t}^{t+h}\langle g(s), \phi_n \rangle_{L^2(\Omega_0)}\d s =  \langle g(t), \phi_n \rangle_{L^2(\Omega_0)}.
			\label{eqincrementalquotients5}
		\end{align}
\end{subequations}
		For every $n \in \N$ we now define the following set
		\begin{equation*}
			A_n := \left\{ t \in [0,T] \,|\, \text{relations } \eqref{list} \text{ hold}
			\right\}.
		\end{equation*}
		By the regularity of $v$ and of the data, we have that $A_n$ has full measure; hence the set
		\begin{equation*}
			Z:= \bigcup_{n \in \N} Z_n, \quad\text{with }\quad Z_n := [0,T] \setminus A_n,
		\end{equation*}
		has null measure. By considering the functions
		\begin{equation*}
			\psi^h_n(s):= \frac{1}{h}\, \phi_n \,\chi_{[t,t+h]}(s) \quad \text{ for all } h>0, n \in \N, t \in [0,T] \setminus Z,
		\end{equation*}
		and testing \eqref{eq v weak sol prop equiv} by $\xi = \psi^h_n \in L^2(0,T;H^1_0(\Omega_0))$, we thus obtain
		\begin{align*}
			&\frac{1}{h}\int_{t}^{t+h}\langle\ddot{v}(s), \phi_n\rangle_{H^1_0(\Omega_0)} \d s + \frac{1}{h} \int_{t}^{t+h}\langle B(s) \nabla v(s), \nabla \phi_n \rangle_{L^2(\Omega_0)} \d s 
			\\
			&+ \frac{1}{h} \int_{t}^{t+h}\langle a(s) \cdot \nabla v(s), \phi_n \rangle_{L^2(\Omega_0)}\d s 
			+\frac{2}{h}\int_{t}^{t+h}\langle \dot{v}(s), \div(b(s) \phi_n) \rangle_{L^2(\Omega_0)}\d s  
			\\
			=& \frac{1}{h} \int_{t}^{t+h} \langle g(s), \phi_n \rangle_{L^2(\Omega_0)}\d t, \qquad\qquad \text{ for all } h>0, n \in \N, t \in [0,T] \setminus Z.
		\end{align*}
		Letting $h \to 0^+$, for $t \in [0,T]\setminus Z$ we now get
		\begin{align*}
			&\langle\ddot{v}(t), \phi_n\rangle _{H^1_0(\Omega_0)} + \langle B(t) \nabla v(t), \nabla \phi_n \rangle_{L^2(\Omega_0)}
			+ \langle a(t) \cdot \nabla v(t), \phi_n \rangle_{L^2(\Omega_0)}
			+2 \langle \dot{v}(t), \div(b(t) \phi_n) \rangle_{L^2(\Omega_0)}\\
			=& \langle g(t), \phi_n \rangle_{L^2(\Omega_0)},
		\end{align*}
		for every $n \in \N$. By the density of $\{ \phi_n \}_{n \in \N}$ in $H^1_0(\Omega_0)$, we finally conclude that $v$ is a strong-weak solution. Hence, the proof is complete.
	\end{proof}
	
	The main result of the section is contained in the following theorem, which states that problems \eqref{eq:u} and \eqref{eq:v} are actually equivalent.
	
	\begin{teo}\label{teoequivalenceweaksol}
			 Assume \eqref{hypsetreg}, \eqref{eq:data} and let $\Phi,\Psi$ be as in \eqref{assumptionsdiff} and satisfy \ref{H1}. Then $u$ is a weak solution of problem \eqref{eq:u} in the sense of Definition \ref{defweaksolu} if and only if the corresponding function $v$ defined as in \eqref{def eq v} is a weak solution of problem \eqref{eq:v} with data \eqref{expcof} in the sense of Definition \ref{defweaksolv}.
	\end{teo}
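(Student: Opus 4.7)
The plan is to set up a bijection between the two classes of weak solutions via the bi-Lipschitz change of variables $v(t,y)=u(t,\Phi(t,y))$ granted by \ref{H1}, and to match the weak identities \eqref{def eq u weak sol} and \eqref{def eq v weak sol} through this substitution. The cleanest route is to pass through the strong-weak formulation of Definition \ref{defweakspacesolv}, which is equivalent to \eqref{def eq v weak sol} by Proposition \ref{propequivweaksolv}, so that the algebraic reductions can be carried out at the PDE level using the identities of Lemma \ref{lemma identities Psi Phi}.

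First I would handle the transfer of regularity and initial conditions. Applying the chain rule to $u(t,x)=v(t,\Psi(t,x))$ at $x=\Phi(t,y)$ and invoking \eqref{eq: DPsi DPhi=I} and \eqref{eq: dot Psi} yields $\nabla u(t,\Phi)=D\Psi(t,\Phi)^T\nabla v$ and $\dot u(t,\Phi)=\dot v-b\cdot\nabla v$, so that the Bochner classes in (i) of the two definitions correspond; since $\Phi(t,\cdot)$ is a bi-Lipschitz homeomorphism from $\overline{\Omega}_0$ onto $\overline{\Omega}_t$ mapping boundary to boundary, one in particular has $u(t)\in H^1_0(\Omega_t)\Leftrightarrow v(t)\in H^1_0(\Omega_0)$. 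The identity $\Phi(0,\cdot)=\mathrm{id}$ from \eqref{eq:phiidentity} yields $v(0)=u_0=v_0$, while differentiating $v=u\circ\Phi$ in time at $t=0$ gives $\dot v(0)=u_1+\dot\Phi(0,\cdot)\cdot\nabla u_0=v_1$ (to be justified in the $H^{-1}(\Omega_0)$ sense).

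Next, given a test function $\xi\in L^2(0,T;H^1_0(\Omega_0))\cap H^1_0(0,T;L^2(\Omega_0))$ with $\xi(0)=\xi(T)=0$, I would take $\eta(t,x):=\xi(t,\Psi(t,x))$, admissible in \eqref{def eq u weak sol} by the above, and perform the change of variables $x=\Phi(t,y)$ with Jacobian $J:=\det D\Phi$ in the $u$-weak identity; after expanding the chain-rule expressions for $\dot u,\nabla u,\dot\eta,\nabla\eta$ and collecting the gradient term as $B\nabla v\cdot\nabla\xi$ via \eqref{eq:B}, one arrives at a ``$J$-weighted'' variant of the $v$-weak identity. Specializing this weighted identity to $\xi\in C^\infty_c((0,T)\times\Omega_0)$ and integrating by parts in time and space yields, distributionally on $(0,T)\times\Omega_0$, an equation of the form $\ddot v-\div(B\nabla v)+\alpha\dot v+\beta\cdot\nabla v-2b\cdot\nabla\dot v=g$. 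The identity $\div(bJ)=\dot J$ provided by \eqref{eq: dot detDPsi + div = 0} forces $\alpha=0$, and a direct product-rule computation identifies $\beta=-J^{-1}[\partial_t(bJ)+B^T\nabla J]=a$ by \eqref{eq:a}. Estimates analogous to \eqref{ineq ddot v regul 1}--\eqref{ineq ddot v regul 4} then give $\ddot v\in L^2(0,T;H^{-1}(\Omega_0))$, so $v$ is a strong-weak solution and, by Proposition \ref{propequivweaksolv}, a weak solution of \eqref{eq:v}. The converse implication is symmetric: start from a test function $\phi$ in \eqref{def eq v weak sol}, test \eqref{def eq u weak sol} with $\eta(t,x):=(\phi/J)(t,\Psi(t,x))$, and reverse the previous manipulations via the strong-weak formulation.

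The main obstacle is precisely this algebraic reduction: although each product-rule expansion and integration by parts is elementary, only the specific structure of the coefficients in \eqref{expcof} — and in particular the Jacobi-type identity $\div(bJ)=\dot J$ — produces the exact cancellations that make the $\dot v$-term vanish and the $\nabla v$-coefficient collapse to $a$. Routing the argument through the strong-weak formulation via Proposition \ref{propequivweaksolv} is what keeps this step tractable, by avoiding the less transparent task of finding a direct test-function substitution between the two variational identities.
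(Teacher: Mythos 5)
Your proposal is correct, but it is organized differently from the paper's proof, and the comparison is worth recording. The paper proves the equivalence by a \emph{direct matching of the two variational identities}: given $\xi$ as in Definition~\ref{defweaksolv}, it inserts the Jacobian-weighted test function $\eta(t,x)=\xi(t,\Psi(t,x))\det D\Psi(t,x)$ into \eqref{def eq u weak sol}, changes variables, and uses the identities \eqref{eq: nabla (detDPsi detDPhi)=0}--\eqref{eq: dot detDPsi + div = 0} to show that the resulting expression is \emph{exactly} the left-hand side of \eqref{def eq v weak sol}; Proposition~\ref{propequivweaksolv} is never invoked. You instead take the unweighted test function $\eta=\xi(t,\Psi(t,x))$, accept the $J$-weighted identity this produces, and then pass to the distributional PDE on $(0,T)\times\Omega_0$ to identify the coefficients, routing the conclusion through Definition~\ref{defweakspacesolv} and Proposition~\ref{propequivweaksolv}. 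Your coefficient computation is right: with $J=\det D\Phi$ and $b=-\dot\Psi\circ\Phi$, the identity $\div(bJ)=\dot J$ (which is precisely \eqref{eq: dot detDPsi + div = 0}) kills the zeroth-order-in-$\nabla$ term in $\dot v$, and $-J^{-1}[\partial_t(bJ)+B^T\nabla J]$ is exactly $a$ from \eqref{eq:a} since $\det D\Psi(t,\Phi)=J^{-1}$. What the paper's weighted test function buys is that the Jacobian from the change of variables cancels against $\det D\Psi$, so one lands directly on the unweighted $v$-identity without leaving the class of admissible test functions; the price is having to differentiate the product $\xi(\Psi)\det D\Psi$ and track more terms. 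Your route keeps the change of variables cleaner but requires the extra step of upgrading the distributional equation to the a.e.-in-$t$ identity \eqref{def eq v weak space sol} (which does work: all right-hand-side terms lie in $L^2(0,T;H^{-1}(\Omega_0))$, so $\ddot v$ does too).

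Two minor points. First, your description of the converse is garbled as stated: one cannot ``test \eqref{def eq u weak sol}'' when $u$ being a weak solution is the conclusion rather than the hypothesis. The correct reversal is: given an admissible $\eta$ for \eqref{def eq u weak sol}, set $\xi(t,y)=\eta(t,\Phi(t,y))$, test the strong-weak formulation of $v$ (available by Proposition~\ref{propequivweaksolv}) against $J(t)\xi(t)\in H^1_0(\Omega_0)$, integrate in time, and undo your forward manipulations; since all steps are equalities and the test-function correspondence is a bijection, this closes the argument. (The paper is equally terse here, disposing of the converse with ``the very same argument''.) Second, you assert the transfer of the initial conditions and of the regularity class (i) without proof; this is acceptable at the level of detail of the paper, which omits the point entirely, but the $H^{-1}$ justification of $\dot v(0)=v_1$ does deserve a line if written out in full.
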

	
	\begin{proof}
		
		Let $u$ be a weak solution of problem \eqref{eq:u} and let $v$ be defined by the change of variables $\Phi$ as in \eqref{def eq v}. Let
		\begin{equation*}
			\xi \in L^2(0,T; H^1_0(\Omega_0)) \cap H^1_0(0,T; L^2(\Omega_0)).
		\end{equation*}
		Then consider the test function defined by
		\begin{equation*}
			\eta(t,x) = \xi(t,\Psi(t,x)) \det D \Psi(t,x) \quad \text{for a.e. } (t,x) \in \mathcal{O}.
		\end{equation*}
		We observe that by hypothesis \ref{H1}, clearly $\eta \in L^2(0,T;H^1_0(\Omega_t))$ with $\dot\eta \in L^2(0,T;L^2(\Omega_t))$ and $\eta(T)=\eta(0)=0$.
		By \eqref{def eq u} the following relations hold  for a.e. $(t,x) \in \mc O$:
		\begin{align*}
			&\bullet\,\dot{u}(t,x) = \dot{v}(t,\Psi(t,x)) + \nabla v (t,\Psi(t,x)) \cdot \dot{\Psi}(t,x),
			\\
			&\bullet\,\nabla u(t,x) = 
			D\Psi(t,x)^T\,\nabla v (t,\Psi(t,x)).
		\end{align*}
		Hence, by \eqref{def eq u weak sol} we get
		\begin{equation}\label{eq solweakvchangevar}
			\begin{aligned}
				&\underbrace{-\int_{0}^{T} \left\langle  \dot{v}(t,\Psi(t,\cdot)) + \nabla v (t,\Psi(t,\cdot)) \cdot \dot{\Psi}(t,\cdot)), \frac{\d}{\d t}  \Big[\xi(t,\Psi(t,\cdot)) \det D \Psi(t,\cdot) \Big] \right\rangle_{L^2(\Omega_t)} \, \d t}_{J_1}
				\\
				&\underbrace{+ \int_{0}^{T}  \left\langle D\Psi(t,\cdot)^T \,\nabla v (t,\Psi(t,\cdot)), \nabla \Big[\xi(t,\Psi(t,\cdot)) \det D \Psi(t,\cdot)\Big] \right\rangle_{L^2(\Omega_t)} \d t}_{J_2}
				\\
				=& \int_{0}^{T} \langle f(t,\cdot), \xi(t,\Psi(t,\cdot)) \det D \Psi(t,\cdot)\rangle_{L^2(\Omega_t)} \d t.
			\end{aligned}
		\end{equation}
		Then, by the change of variables $x= \Phi (t,y)$ and using the identity \eqref{eq: detDPsi detDPhi=1} of Lemma \ref{lemma identities Psi Phi}, we get 
		\begin{equation}\label{eq f=g}
			\int_{0}^{T} \langle f(t,\cdot), \xi(t,\Psi(t,\cdot)) \det D \Psi(t,\cdot) \rangle_{L^2(\Omega_t)} \d t = \int_{0}^{T} \langle g(t,\cdot), \xi(t,\cdot)\rangle_{L^2(\Omega_0)} \d t.
		\end{equation}
		In order to conclude the proof, we shall prove that $J_1+J_2$ coincides with the left-hand side of \eqref{def eq v weak sol}. We start by considering the term $J_2$. Expanding the term $\nabla \Big[\xi(t,\Psi(t,\cdot)) \det D \Psi(t,\cdot) \Big]$ yields to 
		\begin{equation*}
			\nabla \Big[\xi(t,\Psi(t,\cdot)) \det D \Psi(t,\cdot) \Big] = D \Psi(t,\cdot)^T \nabla \xi(t,\Psi(t,\cdot)) \det D \Psi(t,\cdot) +  \xi(t,\Psi(t,\cdot)) \, \nabla \det  D \Psi(t,\cdot).
		\end{equation*}
		Performing again the change of variables $x= \Phi(t,y)$, by \eqref{eq: detDPsi detDPhi=1} we obtain
		\begin{align*}
			J_2 =& \int_{0}^{T} \langle D\Psi(t,\Phi(t,\cdot))^T \nabla v (t,\cdot), D \Psi(t,\Phi(t,\cdot))^T \nabla \xi(t,\cdot) \rangle_{L^2(\Omega_0)} \d t
			\\
			& +  \int_{0}^{T} \langle D\Psi(t,\Phi(t,\cdot))^T \nabla v (t,\cdot),  \xi(t,\cdot) \, \nabla\det  D \Psi(t,\Phi(t,\cdot)) \det D\Phi(t,\cdot) \rangle_{L^2(\Omega_0)} \d t.
		\end{align*}
		Then, by \eqref{eq: nabla (detDPsi detDPhi)=0} in Lemma \ref{lemma identities Psi Phi} we deduce that  
		\begin{equation*}
			\begin{aligned}
				J_2 =& \int_{0}^{T} \langle D\Psi(t,\Phi(t,\cdot)) D\Psi(t,\Phi(t,\cdot))^T \nabla v (t,\cdot) , \nabla \xi(t,\cdot)  \rangle_{L^2(\Omega_0)} \d t
				\\
				&- \int_{0}^{T} \langle D\Psi(t,\Phi(t,\cdot)) D\Psi(t,\Phi(t,\cdot))^T \nabla v (t,\cdot) , \xi(t,\cdot) \, \nabla\det D\Phi(t,\cdot) \det  D \Psi(t,\Phi(t,\cdot)) \rangle_{L^2(\Omega_0)} \d t.
			\end{aligned}
		\end{equation*}
		We now consider the term $J_1$. Expanding the term $\frac{\d}{\d t}  \Big[\xi(t,\Psi(t,\cdot)) \det D \Psi(t,\cdot) \Big]$ yields to 
		\begin{align*}
			\frac{\d}{\d t}  \Big[\xi(t,\Psi(t,\cdot)) \det D \Psi(t,\cdot) \Big] &= 
			\dot{\xi}(t,\Psi(t,\cdot)) \det D\Psi(t,\cdot) 
			\\
			&\quad  + \nabla\xi (t,\Psi(t,\cdot))  \cdot \dot{\Psi}(t,\cdot) \det D\Psi(t,\cdot) + \xi(t,\Psi(t,\cdot)) \, \partial_t \det D\Psi(t,\cdot).
		\end{align*}
		Arguing as before, we obtain
		\begin{align*}
			\allowdisplaybreaks
			J_1 =& - \int_{0}^{T} \langle\dot{v}(t,\cdot), \dot{\xi} (t,\cdot)\rangle_{L^2(\Omega_0)} \d t -  \int_{0}^{T}  \langle \dot{v}(t,\cdot), \nabla\xi(t,\cdot) \cdot \dot{\Psi}(t, \Phi(t,\cdot)) \rangle_{L^2(\Omega_0)} \d t
			\\
			&-  \int_{0}^{T} \langle  \dot{v}(t,\cdot), \xi(t,\cdot) \, \partial_t \left[\det D \Psi(t,\cdot)\right](\Phi(t,\cdot)) \det D\Phi(t,\cdot) \rangle_{L^2(\Omega_0)} \d t 
			\\
			&- \int_{0}^{T} \langle \nabla v (t,\cdot) \cdot \dot{\Psi}(t,\Phi(t,\cdot)) , \dot{\xi}(t,\cdot) \rangle_{L^2(\Omega_0)} \d t
			\\
			&- \int_{0}^{T} \langle \nabla v (t,\cdot) \cdot \dot{\Psi}(t,\Phi(t,\cdot)), \nabla\xi(t,\cdot)  \cdot \dot{\Psi}(t,\Phi(t,\cdot)) \rangle_{L^2(\Omega_0)} \, \d t
			\\
			&- \int_{0}^{T} \langle \nabla v (t,\cdot) \cdot \dot{\Psi}(t,\Phi(t,\cdot)) ,\xi(t,\cdot) \,\partial_t [\det D \Psi(t,\cdot)](\Phi(t,\cdot))  \det D\Phi(t,\cdot) \rangle_{L^2(\Omega_0)} \d t.
		\end{align*} 
		Then, by the identity \eqref{eq: dot (detDPsi detDPhi)=0} of Lemma \ref{lemma identities Psi Phi} we deduce that
		\begin{equation}\label{eq T_1}
			\allowdisplaybreaks
			\begin{aligned}
				J_1 =& - \int_{0}^{T} \langle\dot{v}(t,\cdot), \dot{\xi} (t,\cdot)\rangle_{L^2(\Omega_0)} \d t -  \int_{0}^{T}  \langle \dot{v}(t,\cdot), \nabla\xi(t,\cdot) \cdot \dot{\Psi}(t, \Phi(t,\cdot)) \rangle_{L^2(\Omega_0)} \d t
				\\
				& + \int_{0}^{T} \langle \dot{v}(t,\cdot) , \xi(t,\cdot) \, \partial_t \det D\Phi(t,\cdot) \det D \Psi(t,\Phi(t,\cdot)) \rangle_{L^2(\Omega_0)} \d t 
				\\
				& + \int_{0}^{T} \langle \dot{v}(t,\cdot), \xi(t,\cdot) \, \nabla [\det D \Psi(t,\cdot)](\Phi(t,\cdot)) \cdot \dot{\Phi}(t,\cdot) \det D \Phi(t,\cdot)\rangle_{L^2(\Omega_0)} \d t 
				\\
				&- \int_{0}^{T} \langle \nabla v (t,\cdot) \cdot \dot{\Psi}(t,\Phi(t,\cdot)), \dot{\xi}(t,\cdot) \rangle_{L^2(\Omega_0)} \d t
				\\
				&- \int_{0}^{T} \langle \dot{\Psi}(t,\Phi(t,\cdot)) \otimes  \dot{\Psi}(t,\Phi(t,\cdot)) \nabla v (t,\cdot), \nabla\xi(t,\cdot) \rangle_{L^2(\Omega_0)}\d t
				\\
				& + \int_{0}^{T} \langle\nabla v (t,\cdot) \cdot \dot{\Psi}(t,\Phi(t,\cdot)), \xi(t,\cdot) \,\partial_t \det D\Phi(t,\cdot) \det D \Psi(t,\Phi(t,\cdot))  \rangle_{L^2(\Omega_0)} \d t
				\\
				& + \int_{0}^{T}\langle \nabla v (t,\cdot)\cdot \dot{\Psi}(t,\Phi(t,\cdot)), \xi(t,\cdot) \,\nabla [\det D \Psi(t,\cdot)](\Phi(t,\cdot)) \cdot \dot{\Phi}(t,\cdot) \det D \Phi(t,\cdot) \rangle_{L^2(\Omega_0)} \d t.
			\end{aligned}
		\end{equation}
		We now notice that, in force of the relation \eqref{eq: nabla (detDPsi detDPhi) dotPhi =0} of Lemma \ref{lemma identities Psi Phi}, we can rewrite the last summand of \eqref{eq T_1} as follows:
		\begin{align*}
			&\int_{0}^{T} \langle \nabla v (t,\cdot)\cdot \dot{\Psi}(t,\Phi(t,\cdot)), \xi(t,\cdot) \,\nabla [\det D \Psi(t,\cdot)](\Phi(t,\cdot)) \cdot \dot{\Phi}(t,\cdot) \det D \Phi(t,\cdot) \rangle_{L^2(\Omega_0)} \d t
			\\
			=& \int_{0}^{T} \langle\dot{\Psi}(t,\Phi(t,\cdot)) \otimes  \dot{\Psi}(t,\Phi(t,\cdot)) \nabla v(t,\cdot), \xi(t,\cdot) \,\nabla\det D \Phi(t,\cdot)\det D \Psi(t,\Phi(t,\cdot)) \rangle_{L^2(\Omega_0)}\d t.
		\end{align*}
		The fourth summand of \eqref{eq T_1} can be instead rewritten as
		\begin{align*}
			&\int_{0}^{T} \langle  \dot{v}(t,\cdot), \xi(t,\cdot) \, \nabla [\det D \Psi(t,\cdot)](\Phi(t,\cdot)) \cdot \dot{\Phi}(t,\cdot) \det D \Phi(t,\cdot) \rangle_{L^2(\Omega_0)} \d t 
			\\
			 =& \int_{0}^{T} \langle  \dot{v}(t,\cdot), \xi(t,\cdot) \, \dot{\Psi}(t,\Phi(t,\cdot)) \cdot \nabla\det D \Phi(t,\cdot) \det D \Psi(t,\Phi(t,\cdot)) \rangle_{L^2(\Omega_0)} \d t. 
		\end{align*}
		By using \eqref{eq: dot detDPsi + div = 0} in Lemma \ref{lemma identities Psi Phi} (keeping in mind also \eqref{eq: detDPsi detDPhi=1}), the sum of the third and the fourth summand in \eqref{eq T_1} gives
		\begin{equation}\label{eq T_1 third and fourth}
			\begin{aligned}
				& \int_{0}^{T} \langle  \dot{v}(t,\cdot), \xi(t,\cdot) \, \partial_t \det D\Phi(t,\cdot) \det D \Psi(t,\Phi(t,\cdot)) \rangle_{L^2(\Omega_0)} \d t 
				\\
				& + \int_{0}^{T} \langle \dot{v}(t,\cdot), \xi(t,\cdot) \,  \dot{\Psi}(t,\Phi(t,\cdot)) \cdot \nabla\det D \Phi(t,\cdot) \det D \Psi(t,\Phi(t,\cdot)) \rangle_{L^2(\Omega_0)} \d t 
				\\
				=& -  \int_{0}^{T} \left\langle \dot{v}(t,\cdot),  \xi(t,\cdot) \div \left(\dot{\Psi}(t, \Phi(t,\cdot))\right) \right\rangle_{L^2(\Omega_0)}\d t.
			\end{aligned}
		\end{equation}
		We finally consider the second and the fifth summand of \eqref{eq T_1}. We split the term $\dot{\Psi}(t,\Phi(t,y)) \dot{\xi}(t,y)$ into
		\begin{equation*}
			\dot{\Psi}(t,\Phi(t,y)) \dot{\xi}(t,y) = \frac{\d}{\d t}\left( \dot{\Psi}(t,\Phi(t,y)) \xi(t,y)\right) - \left( \frac{\d}{\d t} \dot{\Psi}(t,\Phi(t,y))\right) \xi(t,y),
		\end{equation*}
		and rewrite the term $\xi(t,y) \div \left(\dot{\Psi}(t, \Phi(t,y))\right)  + \nabla\xi(t,y) \cdot \dot{\Psi}(t, \Phi(t,y))$ (cf. the second summand of \eqref{eq T_1} and right-hand side of \eqref{eq T_1 third and fourth}) as 
		\begin{equation*}
			\xi(t,y) \div \left(\dot{\Psi}(t, \Phi(t,y))\right)  + \nabla\xi(t,y) \cdot \dot{\Psi}(t, \Phi(t,y)) = \div \left(\dot{\Psi}(t, \Phi(t,y))\xi(t,y)\right).
		\end{equation*}
		Finally, integrating by parts in space and in time, we conclude that
		\begin{equation*}
			\begin{aligned}
				&-  \int_{0}^{T} \langle  \dot{v} (t,\cdot), \nabla\xi(t,\cdot) \cdot \dot{\Psi}(t, \Phi(t,\cdot)) \rangle_{L^2(\Omega_0)} \d t
				-  \int_{0}^{T} \left\langle \dot{v}(t,\cdot), \xi(t,\cdot) \div \left(\dot{\Psi}(t, \Phi(t,\cdot))\right) \right\rangle_{L^2(\Omega_0)} \d t
				\\
				&- \int_{0}^{T} \langle \nabla v (t,\cdot) \cdot \dot{\Psi}(t,\Phi(t,\cdot)), \dot{\xi}(t,\cdot) \rangle_{L^2(\Omega_0)} \d t\\
				 =& 
				-  2\int_{0}^{T} \left\langle  \dot{v}(t,\cdot) , \div \left( \dot{\Psi}(t, \Phi(t,\cdot)) \xi(t,\cdot)\right) \right\rangle_{L^2(\Omega_0)}\!\!\!\!\!\d t
				+ \int_{0}^{T} \left\langle \nabla v (t,\cdot) \cdot \left(\frac{\d}{\d t} \dot{\Psi}(t,\Phi(t,\cdot))\right), \xi(t,\cdot) \right\rangle_{L^2(\Omega_0)}\!\!\!\!\!\d t.
			\end{aligned} 
		\end{equation*}
		Hence, recalling the expressions \eqref{eq:B}, \eqref{eq:a} and \eqref{eq:b}, we deduce that
		\begin{equation*}
			\begin{aligned}
				J_1 + J_2 =& -\int_{0}^{T} \langle \dot{v}(t), \dot{\xi}(t) \rangle_{L^2(\Omega_0)} \d t + \int_{0}^{T} \langle B(t) \nabla v(t), \nabla \xi(t) \rangle_{L^2(\Omega_0)}\d t 
				\\
				&+ \int_{0}^{T} \langle a(t) \cdot \nabla v(t), \xi(t) \rangle_{L^2(\Omega_0)}\d t
				+2 \int_{0}^{T}\langle \dot{v}(t), \div(b(t) \xi(t)) \rangle_{L^2(\Omega_0)}\d t,
			\end{aligned}
		\end{equation*}
		which, due to \eqref{eq solweakvchangevar} and \eqref{eq f=g}, yields the conclusion.
		
		Using the very same argument, one can prove the reverse implication, thus the proof is complete.
	\end{proof}
	
	\section{Nonautonomous hyperbolic equations}\label{sec:hyperbolic}
	In this section we focus on the hyperbolic problem \eqref{eq:v}, independently of its relation with the original problem given by the explicit formulas \eqref{expcof}. We employ the classical Galerkin method in order to obtain the higher regularity we are looking for. In Subsection~\ref{sec:firstestimate} we show the first basic estimates, which also provide a way to prove existence of strong-weak solutions to the problem under consideration. In Subsection~\ref{sec high reg} we refine such estimates, strenghtening the assumptions on the data, finally deducing more regularity for the solutions previously obtained.

	We thus consider problem \eqref{eq:v} with nonautonomous coefficients $B$, $a$, $b$, forcing term $g$ and initial data $v_0$, $v_1$ satisfying \eqref{regcof} and \eqref{coerB}. We tacitly assume throughout the whole section that 
	\begin{equation}\label{eq:Omega0}
		\text{the set $\Omega_{0}\subseteq \R^N$ is nonempty, open, bounded and with Lipschitz boundary.}
	\end{equation}
	Moreover, the definitions of weak solutions and strong-weak solutions are the ones given in Definitions~\ref{defweaksolv} and \ref{defweakspacesolv}, respectively. 
	
	\subsection{Galerkin approximation} The Galerkin method consists in projecting problem \eqref{eq:v} onto finite-dimensional spaces, and on finding uniform estimates on the lower dimensional problems which allow to retrieve information on the infinite-dimensional one.
	
	To this aim, let $\{w_k\}_{k \in \N} \subseteq H^2(\Omega_0) \cap H^1_0(\Omega_0)$ be the set of eigenfunctions of $-\Delta$ in $H^1_0(\Omega_0)$ normalized in $L^2(\Omega_0)$. It is a standard fact that they form an orthogonal basis of $H^1_0(\Omega_0)$ and an orthonormal basis of $L^2(\Omega_0)$. Furthermore, by the very definition, for every $k\in \N$ they fulfil
	\begin{equation}\label{eq:eigen}
		\langle \phi, w_k \rangle_{L^2(\Omega_0)}= \frac{\langle \nabla \phi , \nabla w_k\rangle_{L^2(\Omega_0)}}{\norm{\nabla w_k}^2_{L^2(\Omega_0)}},\quad\text{for all }\phi\in H^1_0(\Omega_0).
	\end{equation}	
	For every $m \in \N$, we seek functions $d^m_k \in H^2(0,T)$ such that the function defined by
\begin{equation}\label{eq def vm}
	v^m (t) := \sum_{k=1}^{m} d^m_k(t) w_k \in H^2(0,T; H^2(\Omega_0) \cap H^1_0(\Omega_0))
\end{equation}
satisfies for every $k= 1, \dots, m$ and for almost every $t\in [0,T]$ the finite-dimensional version of problem \eqref{eq:v}, namely
\begin{equation}\label{eq:pdevm}
	\begin{aligned}
		&\langle\ddot{v}^m(t), w_k\rangle
		_{H^1_0(\Omega_0)}+ \langle B(t) \nabla v^m(t), \nabla w_k \rangle_{L^2(\Omega_0)}
		+ \langle a(t) \cdot \nabla v^m(t), w_k \rangle_{L^2(\Omega_0)}
		-2 \langle b(t) \cdot\nabla \dot{v}^m(t), w_k \rangle_{L^2(\Omega_0)}\\
		=& \langle g(t), w_k \rangle_{L^2(\Omega_0)},
	\end{aligned}
\end{equation}
with initial conditions
\begin{subequations}
\begin{align}
	\label{d^m_k(0)}&d^m_k(0) =  \langle v_0, w_k \rangle_{L^2(\Omega_0)},
	\\
	\label{dot d^m_k(0)}&\dot{d}^m_k(0) = \langle v_1, w_k \rangle_{L^2(\Omega_0)}.
\end{align}
\end{subequations}
Comparing \eqref{eq def vm} and \eqref{eq:pdevm}, we obtain an auxiliary ordinary differential equation whose coefficients (time-dependent) are given by
\begin{alignat*}{3}
	&B_{lk}(t) := \langle B(t) \nabla w_l, \nabla w_k \rangle_{L^2(\Omega_0)},
	&&\quad a_{lk}(t) := \langle a(t)\cdot \nabla w_l, w_k \rangle_{L^2(\Omega_0)},&&
	\\
	&b_{lk}(t) := \langle b(t) \cdot \nabla w_l , w_k \rangle_{L^2(\Omega_0)},
	&&\quad g_{k}(t) := \langle g(t), w_k \rangle_{L^2(\Omega_0)},&&\, \text{for a.e.\ $t \in [0,T]$ and all $l,k \in \N$.}
\end{alignat*}
Then, by classical results on linear ordinary differential equations, for every $m \in \N$ there exists a unique m-tuple of functions
\begin{equation*}
	d^m = (d^m_1, \dots, d^m_m) \in (H^2(0,T))^m  ,  
\end{equation*} 
satisfying
\begin{equation}\label{eq:d}
	\begin{cases}\displaystyle
		\ddot{d}^m_k(t) -2 \sum_{l=1}^{m} b_{lk}(t)\, \dot{d}^m_l(t) + \sum_{l=1}^{m} \left(B_{lk}(t) +a_{lk}(t) \right) d^m_l(t) = g_k(t), \quad\text{for a.e. }  t \in [0,T],
		\\\displaystyle
		d^m_k(0) =  \langle v_0, w_k \rangle_{L^2(\Omega_0)},
		\\\displaystyle
		\dot{d}^m_k(0) = \langle v_1, w_k \rangle_{L^2(\Omega_0)},
	\end{cases}
\end{equation}
so that the corresponding function $v^m$ defined by \eqref{eq def vm} satisfies \eqref{eq:pdevm} for almost every $t \in [0,T]$.

\subsection{First estimates and existence}\label{sec:firstestimate}
The goal now is finding suitable uniform estimates on the functions $v^m$. We start with the following proposition.
\begin{prop}\label{propunifboundvm}
	Assume \eqref{regcof} and \eqref{coerB}. Then there exists a constant $C>0$ (independent of $m \in \N$) such that
	\begin{equation}\label{ineq: esssup vm}
		\sup_{0\leq t\leq T} \left(\norm{\dot{v}^m(t)}_{L^2(\Omega_0)}^2 +\norm{v^m(t)}_{H^1_0(\Omega_0)}^2\right) \leq C.
	\end{equation}
	Moreover,
	\begin{align}\label{eq:boundvdott}
		\ddot{v}^m \text{ is uniformly bounded in } L^2(0,T;H^{-1}(\Omega_0)).
	\end{align}
\end{prop}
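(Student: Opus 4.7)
My plan is to run a standard Galerkin energy estimate: test the finite-dimensional equation \eqref{eq:pdevm} by $\dot{v}^m(t)$ (equivalently, multiply the $k$-th equation by $\dot{d}^m_k(t)$ and sum over $k$, which is legitimate because $\dot{v}^m(t)$ lies in $\mathrm{span}\{w_1,\dots,w_m\}\subseteq H^1_0(\Omega_0)$). The first two terms produce, up to lower-order remainders, the time derivative of the natural energy
\[
\mathcal{E}^m(t):=\tfrac12\|\dot v^m(t)\|^2_{L^2(\Omega_0)}+\tfrac12\langle B(t)\nabla v^m(t),\nabla v^m(t)\rangle_{L^2(\Omega_0)},
\]
since $\langle\ddot v^m,\dot v^m\rangle=\tfrac12\tfrac{d}{dt}\|\dot v^m\|^2_{L^2}$ and $\langle B\nabla v^m,\nabla\dot v^m\rangle=\tfrac12\tfrac{d}{dt}\langle B\nabla v^m,\nabla v^m\rangle-\tfrac12\langle \dot B\nabla v^m,\nabla v^m\rangle$, the last remainder being controlled by $\|\nabla v^m\|^2_{L^2}$ thanks to \eqref{regcofB}.

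The two terms that require care are those involving $a$ and $b$. The term $\langle a\cdot\nabla v^m,\dot v^m\rangle$ is estimated by Cauchy--Schwarz and Young's inequality in terms of $\|\nabla v^m\|^2_{L^2}+\|\dot v^m\|^2_{L^2}$, using $a\in L^\infty$ by \eqref{regcofa}. For the term $-2\langle b\cdot\nabla\dot v^m,\dot v^m\rangle$, the key observation is
\[
2\,b\cdot\nabla\dot v^m\,\dot v^m=b\cdot\nabla(\dot v^m)^2,
\]
so that integration by parts (legitimate because $\dot v^m\in H^1_0(\Omega_0)$) gives $-\langle\mathrm{div}\,b,(\dot v^m)^2\rangle_{L^2}$, bounded by $C\|\dot v^m\|^2_{L^2}$ thanks to \eqref{regcofb}. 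The forcing term is handled by Cauchy--Schwarz. Combining these bounds with the ellipticity \eqref{coerB}, which yields $\mathcal{E}^m(t)\ge \tfrac12\|\dot v^m(t)\|^2_{L^2}+\tfrac{c_B}{2}\|\nabla v^m(t)\|^2_{L^2}$, integrating in time and applying Gr\"onwall's inequality gives
\[
\mathcal{E}^m(t)\le C\bigl(\mathcal{E}^m(0)+\|g\|^2_{L^2(0,T;L^2(\Omega_0))}\bigr).
\]
Finally, \eqref{d^m_k(0)}--\eqref{dot d^m_k(0)} identify $v^m(0)$ and $\dot v^m(0)$ with $L^2$-orthogonal projections of $v_0$ and $v_1$ onto $\mathrm{span}\{w_1,\dots,w_m\}$; since $\{w_k\}$ is orthogonal both in $L^2$ and (up to scaling) in $H^1_0$, these projections are uniformly bounded by $\|v_0\|_{H^1_0}$ and $\|v_1\|_{L^2}$, so $\mathcal{E}^m(0)$ is bounded independently of $m$, yielding \eqref{ineq: esssup vm}.

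For the bound \eqref{eq:boundvdott} on $\ddot v^m$, I would fix $\phi\in H^1_0(\Omega_0)$ with $\|\phi\|_{H^1_0(\Omega_0)}\le 1$ and decompose $\phi=\phi_1+\phi_2$ with $\phi_1\in\mathrm{span}\{w_1,\dots,w_m\}$ and $\phi_2$ orthogonal in $H^1_0(\Omega_0)$; since the eigenfunctions are orthogonal in both $L^2$ and $H^1_0$, one has $\|\phi_1\|_{H^1_0}\le 1$ and $\langle\ddot v^m,\phi\rangle_{H^1_0}=\langle\ddot v^m,\phi_1\rangle_{L^2}$. Using \eqref{eq:pdevm} summed against the coefficients of $\phi_1$,
\[
\langle\ddot v^m,\phi_1\rangle_{L^2}=-\langle B\nabla v^m,\nabla\phi_1\rangle_{L^2}-\langle a\cdot\nabla v^m,\phi_1\rangle_{L^2}+2\langle b\cdot\nabla\dot v^m,\phi_1\rangle_{L^2}+\langle g,\phi_1\rangle_{L^2}.
\]
The $B$-, $a$- and $g$-terms are immediately bounded by $C(\|\nabla v^m\|_{L^2}+\|g\|_{L^2})\|\phi_1\|_{H^1_0}$. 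The delicate piece is the $b$-term, for which $\dot v^m$ is not known to lie in $H^1$; I would integrate by parts to rewrite it as $-\langle\dot v^m,\mathrm{div}\,b\,\phi_1+b\cdot\nabla\phi_1\rangle_{L^2}$, which by \eqref{regcofb} is bounded by $C\|\dot v^m\|_{L^2}\|\phi_1\|_{H^1_0}$. Taking the supremum over $\phi$, squaring and integrating in $t$, \eqref{ineq: esssup vm} together with $g\in L^2$ yields \eqref{eq:boundvdott}. The main (mild) obstacle throughout is precisely this juggling of derivatives in the first-order terms so as not to require more regularity on $\dot v^m$ than is controlled by the energy.
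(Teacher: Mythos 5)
Your proposal is correct and follows essentially the same route as the paper: testing \eqref{eq:pdevm} with $\dot v^m$, exploiting the symmetry of $B$ to produce the energy, integrating the $b$-term by parts via $2b\cdot\nabla\dot v^m\,\dot v^m=b\cdot\nabla(\dot v^m)^2$, controlling the initial data through the double orthogonality of the eigenfunctions, and closing with Gr\"onwall; the duality argument for $\ddot v^m$ with the decomposition $\phi=\phi_1+\phi_2$ and the rewriting of the $b$-term as $-2\langle\dot v^m,\div(b\,\phi_1)\rangle_{L^2(\Omega_0)}$ is also exactly the paper's argument. No gaps.
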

\begin{proof}
	The proof is rather standard, see for instance \cite{Evans}. We however present it in detail, since similar estimates will be employed in the (more involved) proof of Proposition~\ref{teoH2regularityvm}.\par 
	Consider the equation \eqref{eq:pdevm} satisfied by $v^m$, multiply it by $\dot{d}^m_k(t)$ and sum from $k=1,\dots,m$. Fixing $t \in [0,T]$ and integrating with respect to time over $(0,t)$, we thus obtain 
	\begin{align*}
		&\quad\underbrace{\int_{0}^{t}\langle\ddot{v}^m(s), \dot{v}^m(s)\rangle _{H^1_0(\Omega_0)} \d s}_{J_1}+ \underbrace{\int_{0}^{t}\langle B(s) \nabla v^m(s), \nabla \dot{v}^m(s) \rangle_{L^2(\Omega_0)}\d s}_{J_2}
		\\
		&+\underbrace{\int_{0}^{t}\langle a(s) \cdot \nabla v^m(s), \dot{v}^m(s) \rangle_{L^2(\Omega_0)}\d s}_{J_3}
		-\underbrace{2 \int_{0}^{t}\langle b(s) \cdot\nabla \dot{v}^m(s), \dot{v}^m(s) \rangle_{L^2(\Omega_0)}\d s}_{J_4}\\
		=& \underbrace{\int_{0}^{t}\langle g(s), \dot{v}^m(s) \rangle_{L^2(\Omega_0)}\d s}_{J_5}.
	\end{align*}
	Let us consider each of the terms $J_1,\dots,J_5$ defined above. For the term $J_1$ we have
	\begin{equation*}
		\int_{0}^{t}\langle\ddot{v}^m(s), \dot{v}^m(s)\rangle _{H^1_0(\Omega_0)} \d s = \frac{1}{2} \norm{\dot{v}^m(t)}_{L^2(\Omega_0)}^2 -\frac{1}{2} \norm{\dot{v}^m(0)}_{L^2(\Omega_0)}^2.
	\end{equation*}
	We notice, moreover, that $\norm{\dot{v}^m(0)}_{L^2(\Omega_0)}^2 \leq \norm{v_1}_{L^2(\Omega_0)}^2$ by \eqref{dot d^m_k(0)}.
	
	As for the term $J_2$, by the symmetry of the matrix $B$ and integrating by parts in time, we have
	\begin{align*}
		\int_{0}^{t}\langle B(s) \nabla v^m(s), \nabla \dot{v}^m(s) \rangle_{L^2(\Omega_0)}\d s =&  \frac{1}{2} \langle B(t) \nabla v^m(t), \nabla v^m(t) \rangle_{L^2(\Omega_0)} - \frac{1}{2} \langle B(0) \nabla v^m(0), \nabla v^m(0) \rangle_{L^2(\Omega_0)}
		\\
		&-\frac{1}{2}\int_{0}^{t}\langle \dot{B}(s) \nabla v^m(s), \nabla v^m(s) \rangle_{L^2(\Omega_0)}\d s.
	\end{align*}
	Moreover, by \eqref{coerB} and \eqref{regcofB}, we deduce that 
	\begin{align*}
		&\bullet\,\frac{1}{2} \langle B(t) \nabla v^m(t), \nabla v^m(t) \rangle_{L^2(\Omega_0)} \geq \frac{c_B}{2} \norm{\nabla v^m(t)}^2_{L^2(\Omega_0)},
		\\
		&\bullet\, \frac{1}{2}\left| \langle B(0) \nabla v^m(0), \nabla v^m(0) \rangle_{L^2(\Omega_0)}\right| \leq \frac{1}{2} \|B(0)\|_{L^{\infty}(\Omega_0)}\norm{\nabla v^m(0)}^2_{L^2(\Omega_0)},
		\\
		&\bullet\,\left|\frac{1}{2}\int_{0}^{t}\langle \dot{B}(s) \nabla v^m(s), \nabla v^m(s) \rangle_{L^2(\Omega_0)}\d s\right| \leq \frac{1}{2}
		\|\dot{B}\|_{L^\infty((0,T)\times\Omega_0)}
		\int_{0}^{t} \norm{\nabla v^m(s)}^2_{L^2(\Omega_0)} \d s.
	\end{align*}
	We notice, in particular, that $\norm{\nabla v^m(0)}^2_{L^2(\Omega_0)} \leq \norm{\nabla v_0}^2_{L^2(\Omega_0)}$ by \eqref{eq:eigen} and \eqref{d^m_k(0)}.\par
	As for $J_3$, by \eqref{regcofa} we have
	\begin{equation*}
		\left|\int_{0}^{t}\langle a(s) \cdot \nabla v^m(s), \dot{v}^m(s) \rangle_{L^2(\Omega_0)}\d s\right| \leq \norm{a}_{L^\infty((0,T)\times\Omega)} \int_{0}^{t} \left( \norm{\dot{v}^m(s)}_{L^2(\Omega_0)}^2 + \norm{\nabla v^m(s)}_{L^2(\Omega_0)}^2 \right)\d s.
	\end{equation*}	
	Regarding $J_4$, we observe that for each $s \in (0,t)$ one has $\dot{v}^m(s) \in H^1_0(\Omega_0)$, and hence integrating by parts in space and exploiting \eqref{regcofb} we get
	\begin{align*}
		2\left| \int_{0}^{t}\langle b(s) \cdot\nabla \dot{v}^m(s), \dot{v}^m(s) \rangle_{L^2(\Omega_0)}\d s\right| =& \left|\int_{0}^{t}\langle b(s), \nabla|\dot{v}^m(s)|^2 \rangle_{L^1(\Omega_0)}\d s \right|
		\\
		=& \left|-\int_{0}^{t}\langle \div(b(s)), |\dot{v}^m(s)|^2 \rangle_{L^1(\Omega_0)}\d s\right| \\
		\leq &
		\norm{\div(b)}_{L^\infty((0,T)\times\Omega_0)} \int_{0}^{t}\norm{\dot{v}^m(s)}_{L^2(\Omega_0)}^2\d s.
	\end{align*}	
	Finally, for the term $J_5$, by Young's inequality we readily deduce that 
	\begin{equation*}
		\left|\int_{0}^{t}\langle g(s), \dot{v}^m(s) \rangle_{L^2(\Omega_0)}\d s\right| \leq \frac{1}{2}\norm{g}^2_{L^2(0,T;L^2(\Omega_0)}
		+ \frac{1}{2}\int_{0}^{t} \norm{\dot{v}^m(s)}_{L^2(\Omega_0)}^2 \d s.
	\end{equation*}
	By the estimates obtained for $J_1,\dots,J_5$, we deduce that there exist positive constants $c_1$ and $c_2$ such that for every $t \in (0,T)$ there holds
	\begin{equation*}
		\norm{\dot{v}^m(t)}_{L^2(\Omega_0)}^2 + \frac{c_B}{2} \norm{\nabla v^m(t)}_{L^2(\Omega_0)}^2 \leq c_1 + c_2 \int_{0}^{t} \left( \norm{\dot{v}^m(s)}_{L^2(\Omega_0)}^2 + \norm{\nabla v^m(s)}_{L^2(\Omega_0)}^2 \right) \d s.
	\end{equation*}
	By a classical Gr\"onwall argument together with Poincaré inequality, we deduce the esistence of a constant $C>0$ such that \eqref{ineq: esssup vm} in the statement holds true.
	
	In order to conclude the proof, it remains to prove \eqref{eq:boundvdott}. Fix $w \in H^1_0(\Omega_0)$, such that $\norm{w}_{H^1_0(\Omega_0)} \leq 1$. Now write $w= w^m_{(1)}+w^m_{(2)}$, where $w^m_{(1)} \in \mathrm{span} \{w_k\}_{k=1}^m$ and $\langle w^m_{(2)} , w_k\rangle_{L^2(\Omega_0)}= 0$ for every $k \in \{1,\dots,m\}$. Due to \eqref{eq:eigen} we have $\|w^m_{(1)}\|_{H^1_0(\Omega_0)} \leq 1$.
	Then by \eqref{eq def vm} and \eqref{eq:pdevm}, we obtain
	\begin{align*}
		\langle\ddot{v}^m(t), w\rangle_{H^1_0(\Omega_0)} = \langle \ddot{v}^m(t), w^m_{(1)}\rangle_{L^2(\Omega_0)} = & - \langle B(t) \nabla v^m(t), \nabla w^m_{(1)} \rangle_{L^2(\Omega_0)}
		- \langle a(t) \cdot \nabla v^m(t), w^m_{(1)} \rangle_{L^2(\Omega_0)}
		\\
		&+2 \langle b(t) \cdot\nabla \dot{v}^m(t), w^m_{(1)} \rangle_{L^2(\Omega_0)}
		+ \langle g(t), w^m_{(1)} \rangle_{L^2(\Omega_0)}
		\\
		= & - \langle B(t) \nabla v^m(t), \nabla w^m_{(1)} \rangle_{L^2(\Omega_0)}
		- \langle a(t) \cdot \nabla v^m(t), w^m_{(1)} \rangle_{L^2(\Omega_0)}
		\\
		& - 2 \langle \dot{v}^m(t), \div (b(t) w^m_{(1)}) \rangle_{L^2(\Omega_0)}
		+ \langle g(t), w^m_{(1)} \rangle_{L^2(\Omega_0)}.
	\end{align*}
	Then using again \eqref{regcof}, exploiting \eqref{ineq: esssup vm} and recalling that $\|w^m_{(1)}\|_{H^1_0(\Omega_0)} \leq 1$, the previous equation yields to 
	\begin{equation*}
		\int_{0}^{T} \norm{\ddot{v}^m(s)}^2_{H^{-1}(\Omega_0)} \d s \leq {C},
	\end{equation*}
	for a suitable positive constant ${C}$. We do not detail here the estimates, which are similar to \eqref{ineq:ineq} in Proposition \ref{propequivweaksolv}.
\end{proof}

As a simple corollary we deduce the following result.

\begin{teo}\label{cor}
	Assume \eqref{regcof} and \eqref{coerB}. Then there exists a strong-weak solution $v$ for problem \eqref{eq:v} in the sense of Definition \ref{defweakspacesolv} which fulfils
	\begin{equation}\label{regg}
	\begin{aligned}
		&v\in C^0_w([0,T];H^1_0(\Omega_0)),\\
		&\dot v \in C^0_w([0,T];L^2(\Omega_0)).
	\end{aligned}
\end{equation}
\end{teo}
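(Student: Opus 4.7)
The plan is to pass to the limit in the Galerkin approximations $v^m$ constructed in \eqref{eq def vm}--\eqref{eq:d}, which by Proposition \ref{propunifboundvm} are uniformly bounded in $L^\infty(0,T; H^1_0(\Omega_0))$, with $\dot v^m$ uniformly bounded in $L^\infty(0,T; L^2(\Omega_0))$ and $\ddot v^m$ uniformly bounded in $L^2(0,T; H^{-1}(\Omega_0))$. By the Banach--Alaoglu theorem, up to a (non-relabelled) subsequence there exists a function $v$ such that $v^m \wto v$ weakly-$*$ in $L^\infty(0,T;H^1_0(\Omega_0))$, $\dot v^m \wto \dot v$ weakly-$*$ in $L^\infty(0,T;L^2(\Omega_0))$, and $\ddot v^m \wto \ddot v$ weakly in $L^2(0,T;H^{-1}(\Omega_0))$. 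This immediately yields property (i) of Definition \ref{defweakspacesolv}.

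Next, I would derive the equation \eqref{def eq v weak space sol}. Fix $k \in \N$ and a scalar test function $\chi \in C^\infty_c(0,T)$. Multiplying \eqref{eq:pdevm} by $\chi(t)$ and integrating on $(0,T)$, all four terms on the left-hand side and the right-hand side are continuous with respect to the weak-$*$/weak convergences above (using the regularity \eqref{regcof} of $B$, $a$, $b$, $g$). Passing to the limit yields
\begin{equation*}
\int_0^T\!\!\!\bigl(\langle \ddot v, w_k\rangle_{H^1_0(\Omega_0)} + \langle B\nabla v,\nabla w_k\rangle_{L^2(\Omega_0)} + \langle a\cdot\nabla v, w_k\rangle_{L^2(\Omega_0)} - 2\langle b\cdot\nabla\dot v, w_k\rangle_{L^2(\Omega_0)}\bigr)\chi\,\d t = \int_0^T\!\!\!\langle g, w_k\rangle_{L^2(\Omega_0)}\chi\,\d t,
\end{equation*}
so that the pointwise identity \eqref{def eq v weak space sol} holds a.e.\ against each $w_k$, and by density of $\{w_k\}$ in $H^1_0(\Omega_0)$ it holds against every $\phi \in H^1_0(\Omega_0)$.

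To obtain the weak continuity \eqref{regg}, I would invoke the classical Strauss/Lions--Magenes lemma: since $v \in L^\infty(0,T;H^1_0(\Omega_0))$ and $\dot v\in L^\infty(0,T;L^2(\Omega_0)) \subseteq L^2(0,T;L^2(\Omega_0))$, then $v \in C^0([0,T];L^2(\Omega_0))$ and in fact $v \in C^0_w([0,T];H^1_0(\Omega_0))$; analogously, from $\dot v \in L^\infty(0,T;L^2(\Omega_0))$ and $\ddot v \in L^2(0,T;H^{-1}(\Omega_0))$ one gets $\dot v \in C^0([0,T];H^{-1}(\Omega_0))$ and hence $\dot v \in C^0_w([0,T];L^2(\Omega_0))$. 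This gives in particular property \eqref{regg} and makes the initial conditions in (ii) meaningful.

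The only delicate point -- and the main obstacle -- is verifying the initial conditions $v(0)=v_0$ and $\dot v(0)=v_1$. For $v(0)=v_0$: since $v^m \in C^0([0,T];H^1_0(\Omega_0))$ with $v^m(0) = \sum_{k=1}^m \langle v_0,w_k\rangle_{L^2(\Omega_0)} w_k \to v_0$ in $H^1_0(\Omega_0)$ (by \eqref{eq:eigen}), one tests the Galerkin equation against a function of the form $\xi(t)\phi$ with $\xi \in C^1([0,T])$, $\xi(T)=0$, $\xi(0)=1$ and $\phi$ in the span of finitely many $w_k$'s, integrates the term $\langle \ddot v^m, \phi\rangle\xi$ by parts in time to expose $\dot v^m(0) = \sum_k \langle v_1,w_k\rangle w_k \to v_1$ in $L^2(\Omega_0)$, passes to the limit, and compares with the corresponding integration by parts performed on the limit (which is legitimate thanks to the weak continuity just proved). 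The comparison forces $v(0)=v_0$ and $\dot v(0)=v_1$ in the required duality sense, concluding the proof.
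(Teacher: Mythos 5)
Your proof is correct and follows essentially the same route as the paper's: extract weak limits from the Galerkin approximations via the uniform bounds of Proposition~\ref{propunifboundvm}, pass to the limit in \eqref{eq:pdevm} tested against the basis $\{w_k\}$, and deduce \eqref{regg} from the classical embedding of $L^\infty(0,T;X)\cap H^1(0,T;Y)$ into $C^0_w([0,T];X)$ (your treatment of the initial conditions is in fact more detailed than the paper's, which leaves this point implicit). The only cosmetic remark is that the drift term should be kept in the divergence form $+2\langle \dot v, \div(b\,w_k)\rangle_{L^2(\Omega_0)}$ of Definition~\ref{defweakspacesolv} when passing to the limit, since for the limit function one only knows $\dot v\in L^2(0,T;L^2(\Omega_0))$ and the expression $\nabla\dot v$ appearing in your limit identity is not a priori meaningful.
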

\begin{proof}
	By the uniform bounds obtained in Proposition~\ref{propunifboundvm} one deduces the existence of a subsequence (not relabelled) such that
	\begin{equation*}
		v^m\wto v,\quad\text{ weakly in }L^2(0,T;H^1_0(\Omega_0))\cap H^1(0,T;L^2(\Omega_0))\cap H^2(0,T;H^{-1}(\Omega_0)).
	\end{equation*}
	By integrating \eqref{eq:pdevm} with respect to time, letting $m\to +\infty$ and recalling that $\{w_k\}_{k\in \N}$ is a basis of $H^1_0(\Omega_0)$, it is easy to conclude that the limit function $v$ is a strong-weak solution to problem \eqref{eq:v}. Regularity property \eqref{regg} follows by classical embeddings as in the proof of Theorem~\ref{teoexistencecylindermethod}.
\end{proof}

By slightly strenghtening the assumptions on the coefficients, a uniqueness result is also available. Notice that \eqref{eq:slightreg} below is implied by \eqref{morerega} and \eqref{moreregb}.

\begin{prop}\label{prop:unique}
	In addition to \eqref{regcof} and \eqref{coerB}, assume that 
	\begin{equation}\label{eq:slightreg}
	 a\in C^{0,1}([0,T]\times {\overline{\Omega}_0};\R^N)\quad\text{ and }\quad 	\div b\in C^{0,1}([0,T];L^\infty(\Omega_0)).
	\end{equation}
Then the strong-weak solution to problem \eqref{eq:v} is unique.
\end{prop}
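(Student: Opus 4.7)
I would prove uniqueness by an energy estimate on the difference $w:=v_1-v_2$ of two strong-weak solutions with the same data; this function satisfies the homogeneous version of \eqref{def eq v weak space sol} with $w(0)=\dot w(0)=0$. The obstruction to a direct energy argument is that the natural choice of test function, $\dot w\in L^2(0,T;L^2(\Omega_0))$, is not admissible: Definition \ref{defweakspacesolv}(iii) only allows $\phi\in H^1_0(\Omega_0)$. I would therefore resort to the classical Ladyzhenskaya--Vi\v{s}ik trick: for each fixed $s\in(0,T]$, test the equation against the time-dependent function
\[
\phi(t):=\begin{cases}\displaystyle\int_t^s w(\tau)\d\tau, & 0\le t\le s,\\ 0, & s<t\le T,\end{cases}
\]
which belongs to $L^2(0,T;H^1_0(\Omega_0))\cap H^1(0,T;L^2(\Omega_0))$ and satisfies $\dot\phi=-w\chi_{(0,s)}$, $\phi(s)=\phi(T)=0$.

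After integrating the resulting identity over $(0,T)$, an integration by parts in time in the term $\int_0^T\langle\ddot w,\phi\rangle_{H^1_0}\d t$ (legitimate since $\dot w\in C^0([0,T];H^{-1}(\Omega_0))$) together with the endpoint conditions on $w$ and $\phi$ produces the positive contribution $\tfrac12\|w(s)\|_{L^2(\Omega_0)}^2$. Setting $W(t):=\int_0^t\nabla w(\tau)\d\tau$, one has $\nabla\phi(t)=W(s)-W(t)$ on $(0,s)$; a further integration by parts in time in the $B$-term, exploiting symmetry of $B$ and the identity $\frac{\d}{\d t}\langle BW,W\rangle_{L^2(\Omega_0)}=\langle\dot BW,W\rangle_{L^2(\Omega_0)}+2\langle B\dot W,W\rangle_{L^2(\Omega_0)}$, isolates a second positive contribution $\tfrac12\langle B(s)W(s),W(s)\rangle_{L^2(\Omega_0)}$, bounded below by $\tfrac{c_B}{2}\|W(s)\|_{L^2(\Omega_0)}^2$ thanks to \eqref{coerB}, up to remainders involving $\dot B$ paired with $W$.

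It remains to estimate the lower-order contributions by $\int_0^s(\|w\|_{L^2(\Omega_0)}^2+\|W\|_{L^2(\Omega_0)}^2)\d t$, plus a small portion of $\|w(s)\|_{L^2(\Omega_0)}^2+\|W(s)\|_{L^2(\Omega_0)}^2$ to be reabsorbed into the left-hand side. Here the enhanced regularity \eqref{eq:slightreg} is crucial: the $a$-term is handled by integrating by parts in space (using $\phi\in H^1_0(\Omega_0)$) and exploiting $\div a\in L^\infty$; the $b$-term, which carries the unfavourable $\dot w$, is the most delicate step. I would expand $\div(b\phi)=\div(b)\phi+b\cdot\nabla\phi$ and then integrate by parts in time in both pieces, using $\div b\in C^{0,1}([0,T];L^\infty(\Omega_0))$ to differentiate $\div(b)\phi$ with respect to $t$, and $b\in C^{0,1}$ to treat $b\cdot\nabla\phi=b\cdot(W(s)-W(t))$. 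The main obstacle I anticipate is the bookkeeping of the boundary contributions of the form $\langle w(s),b(s)\cdot W(s)\rangle_{L^2(\Omega_0)}$ arising from these integrations by parts, which I would absorb via Young's inequality with a small parameter. A Grönwall argument on $[0,s_0]$ for some $s_0>0$ depending only on the data then forces $w\equiv 0$ on $[0,s_0]$, and a finite iteration on successive intervals of length $s_0$ extends uniqueness to the whole of $[0,T]$.
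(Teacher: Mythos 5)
Your proposal is correct and is exactly the argument the paper has in mind: the proof there simply cites Ladyzhenskaya's trick and refers to \cite[Theorem~3.10]{DMLuc} for the details, and your test function $\phi(t)=\int_t^s w(\tau)\,\mathrm{d}\tau$ together with the time-integrations by parts in the $a$- and $b$-terms (which is where \eqref{eq:slightreg} enters) is precisely that argument. The only remark worth making is that the boundary contributions you anticipate, such as $\langle w(s),b(s)\cdot W(s)\rangle_{L^2(\Omega_0)}$, in fact vanish because $\phi(s)=0$ and $\nabla\phi(s)=W(s)-W(s)=0$, so no absorption is needed there.
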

\begin{proof}
	The proof is based on an argument introduced by Ladyzenskaya in \cite{Lady}. For details we refer to \cite[Theorem~{3.10}]{DMLuc}.
\end{proof}

For the sake of completeness, we also present a result regarding the energy balance for weak solutions to problem \eqref{eq:v}, which can be proved by following \cite[Proposition~3.11]{DMLuc}. Unfortunately this energy balance is not easily transferable to problem \eqref{eq:u} with moving domains.
\begin{prop}\label{prop energy balance}
	Assume \eqref{regcof} and \eqref{coerB} and let $v$ be a weak solution of problem \eqref{eq:v} satisfying \eqref{regg}. Then, for every $t \in [0,T]$ we have
	\begin{equation}\label{eq energy balance}
		\frac{1}{2} \|\dot{v}(t)\|^2_{L^2(\Omega_0)} + \frac{1}{2} \langle B(t) \nabla v(t), \nabla v(t)\rangle_{L^2(\Omega_0)}= \frac{1}{2} \|v_1\|^2_{L^2(\Omega_0)} + \frac{1}{2} \langle B(0) \nabla v_0, \nabla v_0\rangle_{L^2(\Omega_0)} + {\mathcal{R}}[v](t),
	\end{equation}
	where the remainder is given by
	\begin{equation*}\label{eq remainder energy balance}
		\begin{aligned}
			{\mathcal{R}}[v](t) =& \int_{0}^{t} \frac{1}{2}\langle \dot{B}(s) \nabla v(s), \nabla v(s) \rangle_{L^2(\Omega_0)} - \langle a(s) \cdot \nabla v(s), \dot{v}(s) \rangle_{L^2(\Omega_0)}\d s
			\\
			&+ \int_{0}^{t}\left(-\langle \div b(s) , |\dot{v}(s)|^2 \rangle_{L^1(\Omega_0)} + \langle g(s), \dot{v}(s) \rangle_{L^2(\Omega_0)}\right)\d s.
		\end{aligned}
	\end{equation*}
\end{prop}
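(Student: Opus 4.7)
The formal idea is to test the strong-weak equation \eqref{def eq v weak space sol} with $\phi = \dot v(s)$ and integrate over $[0,t]$. One would rely on three elementary identifications:
\begin{equation*}
\int_0^t \langle \ddot v, \dot v\rangle\,ds = \tfrac12\|\dot v(t)\|^2_{L^2(\Omega_0)} - \tfrac12\|v_1\|^2_{L^2(\Omega_0)},
\end{equation*}
which handles the inertial term; an integration by parts in time using the symmetry of $B$, giving
\begin{equation*}
\int_0^t \langle B\nabla v, \nabla \dot v\rangle\,ds = \tfrac12\langle B(t)\nabla v(t),\nabla v(t)\rangle - \tfrac12\langle B(0)\nabla v_0, \nabla v_0\rangle - \tfrac12\int_0^t\langle \dot B \nabla v,\nabla v\rangle\,ds;
\end{equation*}
and an integration by parts in space for the convective term, $-2\int_0^t\langle b\cdot \nabla \dot v, \dot v\rangle\,ds = -\int_0^t\int_{\Omega_0} b\cdot\nabla |\dot v|^2\,dy\,ds = \int_0^t\langle \div b, |\dot v|^2\rangle\,ds$. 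Summing everything collapses the equation into \eqref{eq energy balance}. This calculation is however not directly admissible since for a weak solution we only have $\dot v(s)\in L^2(\Omega_0)$, not $H^1_0(\Omega_0)$: neither the duality pairing $\langle \ddot v(s), \dot v(s)\rangle_{H^{-1}, H^1_0}$ nor the spatial integration by parts for $\langle b\cdot \nabla \dot v, \dot v\rangle$ make sense as they stand.

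To bypass this regularity gap, I would implement the computation within the Galerkin scheme of Section~\ref{sec:hyperbolic}. The approximation $v^m$ from \eqref{eq def vm} satisfies $v^m(s),\dot v^m(s)\in \mathrm{span}\{w_k\}_{k=1}^m\subseteq H^2(\Omega_0)\cap H^1_0(\Omega_0)$ pointwise in time, so we may legitimately multiply \eqref{eq:pdevm} by $\dot d^m_k(s)$, sum over $k$, and integrate in $s\in[0,t]$. The three formal identities above then apply verbatim to $v^m$, producing the exact energy balance
\begin{equation*}
\tfrac12\|\dot v^m(t)\|^2_{L^2(\Omega_0)} + \tfrac12\langle B(t)\nabla v^m(t),\nabla v^m(t)\rangle_{L^2(\Omega_0)} = \tfrac12\|\dot v^m(0)\|^2 + \tfrac12\langle B(0)\nabla v^m(0),\nabla v^m(0)\rangle + \mathcal R[v^m](t).
\end{equation*}
By \eqref{eq:eigen}, \eqref{d^m_k(0)} and \eqref{dot d^m_k(0)} the initial data of $v^m$ converge strongly to $v_0$ in $H^1_0(\Omega_0)$ and to $v_1$ in $L^2(\Omega_0)$, taking care of the right-hand side of the identity at $s=0$.

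The main obstacle is the passage to the limit $m\to\infty$, since every term is quadratic in $\nabla v^m$ or $\dot v^m$ and the uniform bounds of Proposition~\ref{propunifboundvm} only yield weak convergence. The standard remedy is a two-step matching: first pass to the limit using weak lower semicontinuity of the quadratic functionals on the left-hand side, obtaining an inequality $\tfrac12\|\dot v(t)\|^2+\tfrac12\langle B(t)\nabla v(t),\nabla v(t)\rangle \le \tfrac12\|v_1\|^2+\tfrac12\langle B(0)\nabla v_0,\nabla v_0\rangle + \liminf_m\mathcal R[v^m](t)$; then handle the remainder $\mathcal R[v^m](t)$ via the fact that its terms are bilinear and contain at least one factor on which strong convergence can be bootstrapped (for instance by exploiting Aubin--Lions in the $v$-component and the compactness of $[0,t]\ni s\mapsto B(s),\dot B(s),a(s),\div b(s),g(s)$ in the continuity-in-time spaces of Proposition~\ref{propregularitycoeff}). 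Equality in the resulting chain of inequalities then forces strong convergence of $\dot v^m(t)\to\dot v(t)$ in $L^2(\Omega_0)$ and of $\nabla v^m(t)\to\nabla v(t)$ in $L^2(\Omega_0)$ for every $t$, giving \eqref{eq energy balance} for $v$. (When uniqueness holds, as under the hypotheses of Proposition~\ref{prop:unique}, the Galerkin limit automatically coincides with the given $v$; otherwise one must argue directly on $v$ by time-regularizing via Steklov averages, using $[\dot v]_h(s)\in H^1_0(\Omega_0)$ as an admissible test, and showing that the commutators produced by the variable coefficients vanish as $h\to 0$, which is the alternative route in the spirit of \cite[Proposition~3.11]{DMLuc}.)
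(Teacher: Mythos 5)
Your formal computation (testing with $\dot v$, integrating by parts in time for the $B$-term and in space for the $b$-term) is exactly the identity behind \eqref{eq energy balance}, and you correctly diagnose that the obstruction is the lack of admissibility of $\dot v(s)$ as a test function. However, your main route — running the computation on the Galerkin approximants and passing to the limit — has a genuine gap at the limit passage. The remainder $\mathcal R[v^m](t)$ contains the terms $\int_0^t\langle \dot B\nabla v^m,\nabla v^m\rangle\d s$, $\int_0^t\langle a\cdot\nabla v^m,\dot v^m\rangle\d s$ and $\int_0^t\langle\div b,|\dot v^m|^2\rangle\d s$, which are quadratic in $\nabla v^m$ and $\dot v^m$ with sign-indefinite weights; the bounds of Proposition~\ref{propunifboundvm} only give \emph{weak} convergence of $\nabla v^m$ and $\dot v^m$ in $L^2(0,T;L^2(\Omega_0))$, and weak lower semicontinuity does not apply to a form weighted by $\dot B$ or $\div b$ of arbitrary sign. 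Aubin--Lions upgrades the convergence of $v^m$ itself (in $C^0([0,T];L^2(\Omega_0))$, say), not of $\nabla v^m$ or $\dot v^m$, so the claim that each remainder term ``contains at least one factor on which strong convergence can be bootstrapped'' is false for precisely the terms that matter. Moreover the concluding step is circular: you invoke ``equality in the chain of inequalities'' to deduce strong convergence, but the equality is the statement you are trying to prove, and no independent reverse inequality is supplied. Finally, even if the limit passage were repaired, the argument would only establish \eqref{eq energy balance} for the particular Galerkin limit, whereas the proposition concerns an \emph{arbitrary} weak solution satisfying \eqref{regg}; under \eqref{regcof} and \eqref{coerB} alone uniqueness is not available (it requires the extra hypothesis \eqref{eq:slightreg} of Proposition~\ref{prop:unique}), so the two need not coincide.

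The paper does not use Galerkin here at all: it proves the proposition by the argument of \cite[Proposition~3.11]{DMLuc}, i.e.\ the direct time-regularization of the test function $\dot v$ (a Ladyzenskaya/Lions-type trick performed on $v$ itself), using the weak continuity \eqref{regg} to identify the endpoint terms at $0$ and $t$. This is exactly the route you mention only in your final parenthesis as a fallback; it is in fact the proof, and the commutator estimates it requires (showing that the error terms produced by the variable coefficients $B$, $a$, $b$ vanish as the regularization parameter tends to $0$) are the substantive content that your proposal leaves unaddressed.
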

	\begin{rem}\label{rmk:reg}
		The energy balance \eqref{eq energy balance} allows us to slightly increase the regularity of the solution $v$ obtained in Theorem~\ref{cor}, which actually fulfils
		\begin{equation*}
			\begin{aligned}
				&v\in C^0([0,T];H^1_0(\Omega_0)),\\
				&\dot v \in C^0([0,T];L^2(\Omega_0)).
			\end{aligned}
		\end{equation*}
	This easily follows by the weak lower semicontinuity (with respect to $t$) of the left-hand side of \eqref{eq energy balance} (we recall \eqref{regcofB} and \eqref{coerB}) combined with the continuity of the right-hand side, which indeed yields convergence of the norms.
	\end{rem}

	\subsection{Further estimates and higher regularity}\label{sec high reg}
	We now improve previous estimates assuming that the data of the problem satisfy the stronger assumptions \eqref{moreregcof}. As a byproduct we deduce more regularity for strong-weak solutions to problem \eqref{eq:v}. 
	As for the initial data we require:
	\begin{equation}\label{regulardata}
		v_0 \in H^2(\Omega_0)\cap H^1_0(\Omega_0),\quad\text{ and }\quad v_1 \in H^1_0(\Omega_0).
	\end{equation}
We also need to assume that
	\begin{equation}\label{setregular}
		\Omega_0 \text{ is convex or of class }C^2.
	\end{equation}	
The latter assumption is used in Lemma \ref{lemmagrisvard} below.

\begin{rem}
In the next section we will apply our results to a family of moving domains $\Omega_t$ satisfying \eqref{hypsetreg} and \eqref{setregular}. From this point of view,
it may be surprising to notice that the convexity assumption on $\Omega_0$ allows one to circumvent other regularity assumptions on the domains. In particular, one may consider a family $\Omega_t$ of merely Lipschitz, nonconvex domains, such that they can be mapped into a single convex domain through changes of variables as in \eqref{assumptionsdiff}, satisfying \ref{H1} and \ref{H2}. 
Unfortunately it is difficult to characterize the class of sets fulfilling such property, however this method gives in principle the possibility to deal with irregular domains in concrete cases.
\end{rem}

	\begin{rem}\label{remhigherregcoeff}
	Under these stronger assumptions, the functions $d^m_k$ solving \eqref{eq:d} are of class $H^3(0,T)$, so $v^m$ belongs to $H^3(0,T;H^2(\Omega_0)\cap H^1_0(\Omega_0))$.
	\end{rem}
	
	We start by stating a lemma on elliptic regularity which will be used in Proposition~\ref{teoH2regularityvm} below. 
	\begin{lemma}\label{lemmagrisvard}
		Assume \eqref{setregular} and let $\widetilde{B} \in C^{1,1}({\overline{\Omega}_0}; \R^{N\times N}_{\rm sym})$ be elliptic with ellipticity constant $c_{\widetilde{B}}$. Then there exists a positive constant $\widetilde{D}$, depending only on $\Omega_0, c_{\widetilde{B}}$, and $\|\widetilde{B}\|_{C^{1,1}({\overline{\Omega}_0}; \R^{N\times N}_{\rm sym})}$, such that
		\begin{equation*}
			 \widetilde{D} \norm{\div (\widetilde{B}\nabla z)}_{L^2(\Omega_0)} \geq \norm{z}_{H^2(\Omega_0)}  
\quad \text{for all } z \in H^2(\Omega_0)\cap H^1_0(\Omega_0).
		\end{equation*}
	\end{lemma}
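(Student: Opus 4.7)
The statement is a classical $H^2$ elliptic regularity estimate with $C^{1,1}$ coefficients on convex or $C^2$ domains. I would prove it in three stages: first handle the Laplacian (the known base case), then extend to constant symmetric positive-definite matrices by a linear change of variables, and finally treat the variable-coefficient case via a freezing-of-coefficients argument combined with an absorption of lower order terms.

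The first step is the classical result of Grisvard: for $w \in H^2(\Omega_0) \cap H^1_0(\Omega_0)$, the estimate $\|w\|_{H^2(\Omega_0)} \le C \|\Delta w\|_{L^2(\Omega_0)}$ holds whenever $\Omega_0$ is of class $C^2$ (standard elliptic regularity) or convex (Theorem~3.1.2.1 in Grisvard, which crucially uses the sign of the integrated mean curvature). For a constant positive-definite symmetric matrix $\widetilde{B}_0$, I would introduce the linear change of variables $y = \widetilde{B}_0^{-1/2} x$ and set $\tilde z(y) := z(\widetilde{B}_0^{1/2} y)$. A direct computation gives $\Delta_y \tilde z(y) = \operatorname{div}_x(\widetilde{B}_0 \nabla z)(x)$, and since linear bijections preserve both convexity and $C^2$-regularity, Grisvard's estimate applied on the transformed domain $\widetilde{B}_0^{-1/2}(\Omega_0)$, together with the uniform control on $\widetilde{B}_0^{\pm 1/2}$ coming from $c_{\widetilde B}$ and $\|\widetilde B\|_{C^{0}}$, yields $\|z\|_{H^2(\Omega_0)} \le C \|\operatorname{div}(\widetilde{B}_0 \nabla z)\|_{L^2(\Omega_0)}$ with $C$ depending only on $\Omega_0$, $c_{\widetilde B}$ and $\|\widetilde B\|_{C^{0}}$.

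For the variable coefficient case, I would pick a finite open cover $\{U_k\}_{k=1}^K$ of $\overline{\Omega}_0$, with diameter small enough that $\|\widetilde B(\cdot) - \widetilde B(y_k)\|_{L^\infty(U_k)} \le \varepsilon$ for some point $y_k \in U_k$, and a subordinate partition of unity $\{\chi_k\}$ with $\chi_k \in C^\infty_c(U_k)$; by taking the $U_k$'s either entirely contained in $\Omega_0$ or centered on $\partial\Omega_0$ we preserve the homogeneous Dirichlet condition for $\chi_k z$. Writing
\begin{equation*}
\operatorname{div}(\widetilde B(y_k)\nabla(\chi_k z)) = \chi_k \operatorname{div}(\widetilde B \nabla z) + \operatorname{div}\bigl((\widetilde B(y_k)-\widetilde B)\nabla(\chi_k z)\bigr) + R_k(z),
\end{equation*}
where $R_k(z)$ collects lower order terms involving $\nabla \chi_k$, $\nabla \widetilde B$, $\nabla z$ and $z$, I apply the constant-coefficient estimate to $\chi_k z$ on $\Omega_0$. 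The perturbation term is controlled by $C\varepsilon\|\chi_k z\|_{H^2(\Omega_0)}$ thanks to $\widetilde B \in C^{1,1}$, so choosing $\varepsilon$ sufficiently small allows absorption in the left-hand side after summing over $k$. This leaves the provisional estimate
\begin{equation*}
\|z\|_{H^2(\Omega_0)} \le C\bigl(\|\operatorname{div}(\widetilde B \nabla z)\|_{L^2(\Omega_0)} + \|z\|_{H^1(\Omega_0)}\bigr).
\end{equation*}

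The final step is to eliminate the $H^1$ term. The bilinear form $(u,v)\mapsto \int_{\Omega_0} \widetilde B \nabla u\cdot\nabla v\,\mathrm{d}y$ is coercive on $H^1_0(\Omega_0)$ thanks to $c_{\widetilde B}>0$, so the weak formulation combined with Poincaré's inequality yields $\|z\|_{H^1_0(\Omega_0)} \le C\|\operatorname{div}(\widetilde B\nabla z)\|_{L^2(\Omega_0)}$. Inserting this into the provisional estimate gives the claim. The main obstacle in carrying out the argument rigorously is the partition-of-unity step near $\partial\Omega_0$: one must ensure that the local charts preserve either convexity or $C^2$-regularity of the trace of $U_k\cap\Omega_0$ so that the constant-coefficient Grisvard estimate truly applies to $\chi_k z$; in the $C^2$ case this is routine via flattening, while in the convex case one works directly without flattening, using that $\chi_k z \in H^1_0(\Omega_0)$ is automatic.
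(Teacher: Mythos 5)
Your argument is correct in outline, but it takes a noticeably more self-contained route than the paper, whose ``proof'' of Lemma \ref{lemmagrisvard} is essentially a citation: the authors invoke Grisvard's treatment of \emph{variable-coefficient} second-order operators directly (difference quotients for the $C^2$ case, Sections 2.3--2.4; for the merely convex case, the observation that on convex $C^2$ domains the constant depends only on the diameter, which permits approximation by convex $C^2$ subdomains, Sections 3.1--3.2). You instead use Grisvard only for the Laplacian and rebuild the general case by hand: a linear change of variables $y=\widetilde B_0^{-1/2}x$ for constant coefficients, freezing of coefficients with a partition of unity and absorption for variable coefficients, and coercivity plus Poincar\'e to remove the residual $\|z\|_{H^1}$ term. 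All of these steps are sound (in particular $\Delta_y\tilde z=\div_x(\widetilde B_0\nabla z)$ is correct, and the count of patches and the smallness threshold $\varepsilon$ are controlled by the Lipschitz constant of $\widetilde B$, so the final constant depends only on the data listed in the statement). What the paper's route buys is brevity and the fact that Grisvard's Chapter 3 already handles Lipschitz leading coefficients on convex domains in one stroke; what your route buys is that you only ever need the sharpest and best-known base case, $\|D^2w\|_{L^2(\Omega_0)}\le\|\Delta w\|_{L^2(\Omega_0)}$ for $w\in H^2\cap H^1_0$ on a convex domain.

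One remark: your closing worry about local charts is unfounded, and recognizing this simplifies your own proof. Since $\chi_k z$ belongs to $H^2(\Omega_0)\cap H^1_0(\Omega_0)$ \emph{globally}, you apply the frozen-coefficient estimate on all of $\Omega_0$ (which is convex or $C^2$ by hypothesis \eqref{setregular}), not on $U_k\cap\Omega_0$; no flattening and no preservation of convexity of the patches is needed. The only place where genuine care is required is the uniformity of the constant in the constant-coefficient step: the transformed domains $\widetilde B(y_k)^{-1/2}(\Omega_0)$ vary with $k$, and you must check that their diameters (convex case) or $C^2$ characters ($C^2$ case) are controlled uniformly in terms of $c_{\widetilde B}$ and $\|\widetilde B\|_{C^0}$; this holds because the frozen matrices range over a set with eigenvalues confined to $[c_{\widetilde B},\|\widetilde B\|_{C^0({\overline{\Omega}_0})}]$.
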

	
	\begin{proof}
		 This is a classical result on elliptic regularity and can be proved through the difference quotient technique, as e.g. in \cite{Grisvard}. In particular, the case of $\Omega_0$ of class $C^2$ is contained in \cite[Sect. 2.3-2.4]{Grisvard}. Furthermore, if $\Omega_0$ is convex \emph{and} of class $C^2$, we stress that the constant $\widetilde{D}$ may be chosen in such a way that the dependence on $\Omega_0$ only involves its diameter (this can be found in \cite[Sect. 3.1]{Grisvard}). The dependence just on the diameter allows one to extend the result to the case of $\Omega_0$ merely convex without further reguarity of the boundary, by a standard method of approximation by convex $C^2$-subdomains (see \cite[Sect. 3.2]{Grisvard}).  
	\end{proof}
	
	With this tool at disposal, we are now in a position to deduce higher uniform estimates for the functions $v^m$. 
	
	\begin{prop}\label{teoH2regularityvm}
		Assume \eqref{coerB}, \eqref{moreregcof}, \eqref{setregular} and \eqref{regulardata}. Then, there exists a constant $D>0$ (independent of $m \in \N$) such that
		\begin{equation*}
			\sup_{0\leq t\leq T} \left(\norm{\ddot{v}^m(t)}_{L^2(\Omega_0)}^2 +\norm{\dot{v}^m(t)}_{H^1_0(\Omega_0)}^2 +  \norm{v^m(t)}_{H^2(\Omega_0)}^2\right) \leq D.
		\end{equation*}
	\end{prop}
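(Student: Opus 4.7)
The plan is to imitate the argument of Proposition~\ref{propunifboundvm}, but applied to the time-differentiated Galerkin equation. By Remark~\ref{remhigherregcoeff}, under the strengthened assumptions \eqref{moreregcof} and \eqref{regulardata} the coefficients $d^m_k$ belong to $H^3(0,T)$, so $\dddot{v}^m \in L^2(0,T;H^2(\Omega_0)\cap H^1_0(\Omega_0))$ and we are allowed to differentiate \eqref{eq:pdevm} in time. Doing so, for a.e.\ $t\in[0,T]$ and every $k=1,\dots,m$ one obtains
\begin{equation*}
\begin{aligned}
\langle\dddot{v}^m(t),w_k\rangle_{H^1_0(\Omega_0)}
&+\langle B(t)\nabla\dot v^m(t),\nabla w_k\rangle_{L^2(\Omega_0)}
+\langle \dot B(t)\nabla v^m(t),\nabla w_k\rangle_{L^2(\Omega_0)}\\
&+\langle a(t)\cdot\nabla\dot v^m(t),w_k\rangle_{L^2(\Omega_0)}
+\langle \dot a(t)\cdot\nabla v^m(t),w_k\rangle_{L^2(\Omega_0)}\\
&-2\langle b(t)\cdot\nabla\ddot v^m(t),w_k\rangle_{L^2(\Omega_0)}
-2\langle \dot b(t)\cdot\nabla\dot v^m(t),w_k\rangle_{L^2(\Omega_0)}
=\langle\dot g(t),w_k\rangle_{L^2(\Omega_0)}.
\end{aligned}
\end{equation*}
I would then multiply by $\ddot d^m_k(t)$, sum over $k=1,\dots,m$ and integrate over $(0,t)$, exactly as in Proposition~\ref{propunifboundvm} but with $(\dot v^m,\ddot v^m,\dddot v^m)$ in place of $(v^m,\dot v^m,\ddot v^m)$. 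The coercivity \eqref{coerB} of $B$, together with the $C^{1,1}$-regularity in \eqref{moreregB}, \eqref{morerega}, \eqref{moreregb}, makes every term of the identity analogous to those already estimated, while the new contributions involving $\dot B,\dot a,\dot b$ are absorbed by $\int_0^t(\|\ddot v^m\|_{L^2}^2+\|\nabla\dot v^m\|_{L^2}^2)\d s$ via Young's inequality (using also \eqref{ineq: esssup vm} to dominate the terms containing $\nabla v^m$), and the right-hand side is controlled thanks to \eqref{moreregg}.

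The key preliminary issue is to bound the initial values $\|\ddot v^m(0)\|_{L^2(\Omega_0)}$ uniformly in $m$. Writing \eqref{eq:pdevm} at $t=0$ one has
\begin{equation*}
\langle\ddot v^m(0),w_k\rangle_{L^2(\Omega_0)}
=\langle \div(B(0)\nabla v_0^m),w_k\rangle_{L^2(\Omega_0)}
-\langle a(0)\cdot\nabla v_0^m,w_k\rangle_{L^2(\Omega_0)}
+2\langle b(0)\cdot\nabla v_1^m,w_k\rangle_{L^2(\Omega_0)}
+\langle g(0),w_k\rangle_{L^2(\Omega_0)},
\end{equation*}
where $v_0^m,v_1^m$ denote the $L^2$-projections of $v_0,v_1$ onto $\mathrm{span}\{w_k\}_{k=1}^m$. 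Since $\{w_k\}$ are eigenfunctions of $-\Delta$ in $H^1_0(\Omega_0)$, such projections satisfy $\|v_0^m\|_{H^2(\Omega_0)}\le C\|v_0\|_{H^2(\Omega_0)}$ and $\|v_1^m\|_{H^1_0(\Omega_0)}\le\|v_1\|_{H^1_0(\Omega_0)}$; moreover, by \eqref{moreregg}, the trace $g(0)$ is well defined in $L^2(\Omega_0)$. Thanks to \eqref{regulardata} and the $C^{1,1}$-regularity of the coefficients, the right-hand side is therefore bounded in $L^2(\Omega_0)$ uniformly in $m$, and hence so is $\ddot v^m(0)$. A Gr\"onwall argument in the spirit of Proposition~\ref{propunifboundvm} then yields
\begin{equation*}
\sup_{0\le t\le T}\bigl(\|\ddot v^m(t)\|_{L^2(\Omega_0)}^2+\|\nabla\dot v^m(t)\|_{L^2(\Omega_0)}^2\bigr)\le C.
\end{equation*}

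Finally, for the $H^2$-bound on $v^m(t)$ itself, I would use the original Galerkin equation \eqref{eq:pdevm} at a fixed time $t$ together with elliptic regularity. Projecting any $\phi\in H^1_0(\Omega_0)$ onto $\mathrm{span}\{w_k\}_{k=1}^m$ and arguing as in the proof of \eqref{eq:boundvdott}, one sees that $v^m(t)$ solves (in the strong sense inside the finite-dimensional subspace) an elliptic problem of the form
\begin{equation*}
-\div(B(t)\nabla v^m(t))=g(t)-\ddot v^m(t)-a(t)\cdot\nabla v^m(t)+2b(t)\cdot\nabla\dot v^m(t)+r^m(t),
\end{equation*}
where the remainder $r^m(t)$ arises from the orthogonal complement of the span and is controlled by the estimates above. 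Since $v^m(t)\in H^2(\Omega_0)\cap H^1_0(\Omega_0)$, \eqref{setregular} and the $C^{1,1}$-regularity of $B$ allow us to invoke Lemma~\ref{lemmagrisvard}, obtaining
\begin{equation*}
\|v^m(t)\|_{H^2(\Omega_0)}\le\widetilde D\|\div(B(t)\nabla v^m(t))\|_{L^2(\Omega_0)},
\end{equation*}
and the right-hand side is bounded uniformly in $m$ and $t$ thanks to the previous steps. I expect the main technical obstacle to be precisely this last point: one must ensure that the bound from Lemma~\ref{lemmagrisvard} is uniform in $t$ (which is guaranteed by the $C^{1,1}$-regularity of $B$ in space-time together with \eqref{coerB}), and that the finite-dimensional projection does not spoil the elliptic estimate, which is exactly where the choice of $\{w_k\}$ as eigenfunctions of the Dirichlet Laplacian (and therefore an orthogonal basis also of $H^2\cap H^1_0$ in a suitable sense) plays its role.
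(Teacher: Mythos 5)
Your proposal follows essentially the same route as the paper's proof: differentiate the Galerkin system in time, test with $\ddot d^m_k$, estimate term by term using \eqref{coerB} and \eqref{moreregcof}, bound $\ddot v^m(0)$ from the equation evaluated at $t=0$ together with $\|v^m(0)\|_{H^2(\Omega_0)}\le C\|v_0\|_{H^2(\Omega_0)}$, close with Gr\"onwall, and finally obtain the $H^2$ bound from Lemma~\ref{lemmagrisvard} applied to $-\div(B(t)\nabla v^m(t))$ read off from \eqref{eq:pdevm}. The only divergence is in the last step, where you keep a remainder $r^m(t)$ coming from the finite-dimensional projection while the paper asserts the identity $-\div(B(t)\nabla v^m(t))=g^m(t)-\ddot v^m(t)-a(t)\cdot\nabla v^m(t)+2b(t)\cdot\nabla\dot v^m(t)$ with no remainder; in both write-ups this projection step is the least detailed point (your claim that $r^m(t)$ is ``controlled by the estimates above'' is not justified and risks circularity, since $r^m(t)$ involves the very quantity $\div(B\nabla v^m)$ being estimated), but the overall strategy is identical.
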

	
	\begin{proof}
		Defining $V^m:= \dot{v}^m$ and recalling Remark~\ref{remhigherregcoeff} we know that
		\begin{equation*}
			V^m\in H^2(0,T;H^2(\Omega_0)\cap H^1_0(\Omega_0)). 
		\end{equation*} 
	By differentiating \eqref{eq:pdevm} with respect to time, we hence obtain
		\begin{equation}\label{eq:pdeVm}
			\begin{aligned}
				&\langle\ddot{V}^m(t), w_k\rangle _{H^1_0(\Omega_0)} + \langle \dot{B}(t) \nabla v^m(t), \nabla w_k \rangle_{L^2(\Omega_0)} + \langle B(t) \nabla V^m(t), \nabla w_k \rangle_{L^2(\Omega_0)} 
				\\
				& + \langle \dot{a}(t) \cdot \nabla v^m(t), w_k \rangle_{L^2(\Omega_0)} 
				+ \langle a(t) \cdot \nabla V^m(t), w_k \rangle_{L^2(\Omega_0)}
				-2 \langle \dot{b}(t) \cdot\nabla V^m(t), w_k \rangle_{L^2(\Omega_0)}  
				\\
				&- 2 \langle b(t) \cdot\nabla \dot{V}^m(t), w_k \rangle_{L^2(\Omega_0)}
				= \langle \dot{g}(t), w_k \rangle_{L^2(\Omega_0)}. 
			\end{aligned}
		\end{equation}
		Now multiply \eqref{eq:pdeVm} by $\ddot{d}^m_k(t)$ and sum from $k=1,\dots, m$. Fixing $t \in [0,T]$ and integrating with respect to time over $(0,t)$, we obtain 
		\begin{equation*}
			\begin{aligned}
				&\underbrace{\int_{0}^{t}\langle\ddot{V}^m(s), \dot{V}^m(s)\rangle _{H^1_0(\Omega_0)} \d s}_{J_1}+\underbrace{\int_{0}^{t}\langle \dot{B}(s) \nabla v^m(s), \nabla \dot{V}^m(s) \rangle_{L^2(\Omega_0)}\d s}_{J_2} 
				\\
				&+\underbrace{\int_{0}^{t}\langle B(s) \nabla V^m(s), \nabla \dot{V}^m(s) \rangle_{L^2(\Omega_0)}\d s}_{J_3}+
				\underbrace{\int_{0}^{t}\langle \dot{a}(s) \cdot \nabla v^m(s), \dot{V}^m(s) \rangle_{L^2(\Omega_0)} \d s}_{J_4}
				\\
				&+\underbrace{\int_{0}^{t}\langle a(s) \cdot \nabla V^m(s), \dot{V}^m(s) \rangle_{L^2(\Omega_0)}\d s}_{J_5}
				-\underbrace{2 \int_{0}^{t}\langle \dot{b}(s) \cdot\nabla V^m(s), \dot{V}^m(s) \rangle_{L^2(\Omega_0)}\d s}_{J_6}
				\\
				&-\underbrace{ 2 \int_{0}^{t}\langle b(s) \cdot\nabla \dot{V}^m(s), \dot{V}^m(s) \rangle_{L^2(\Omega_0)}\d s}_{J_7}
				= \underbrace{\int_{0}^{t}\langle \dot{g}(s), \dot{V}^m(s) \rangle_{L^2(\Omega_0)}\d s}_{J_8}. 
			\end{aligned}
		\end{equation*}
		Next we estimate each of the terms in the previous identity. As for $J_1$, the following holds:
		\begin{equation*}
			\int_{0}^{t}\langle \ddot{V}^m(s), \dot{V}^m(s)\rangle _{H^1_0(\Omega_0)} \d s = \frac{1}{2} \|\dot{V}^m(t)\|_{L^2(\Omega_0)}^2 -\frac{1}{2} \|\dot{V}^m(0)\|_{L^2(\Omega_0)}^2.
		\end{equation*}
		We claim that 
		\begin{equation}\label{eq:claim}
			\begin{aligned}
					\|\dot{V}^m(0)\|_{L^2(\Omega_0)}&\le \|g(0)\|_{L^2(\Omega_0)}+\|a(0)\|_{L^\infty(\Omega_0)}\|\nabla v_0\|_{L^2(\Omega_0)} +2\|b(0)\|_{L^\infty(\Omega_0)}\|\nabla v_1\|_{L^2(\Omega_0)}\\
					&\quad\,+\|DB(0)\|_{L^\infty(\Omega_0)}\|\nabla v_0\|_{L^2(\Omega_0)}+C\|B(0)\|_{L^\infty(\Omega_0)}\| v_0\|_{H^2(\Omega_0)}.
			\end{aligned}
		\end{equation}
		We assume for the moment the claim is true and we continue by estimating $J_2$: integrating by parts in time, we have
		\begin{align*}
			&\int_{0}^{t}\langle \dot{B}(s) \nabla v^m(s), \nabla \dot{V}^m(s) \rangle_{L^2(\Omega_0)}\d s 
			\\
			=&  \langle \dot{B}(t) \nabla v^m(t), \nabla V^m(t) \rangle_{L^2(\Omega_0)} - \langle \dot{B}(0) \nabla v^m(0), \nabla V^m(0) \rangle_{L^2(\Omega_0)}
			\\
			&-\int_{0}^{t}\langle \ddot{B}(s) \nabla v^m(s), \nabla V^m(s) \rangle_{L^2(\Omega_0)}\d s -\int_{0}^{t}\langle \dot{B}(s) \nabla V^m(s), \nabla V^m(s) \rangle_{L^2(\Omega_0)}\d s.
		\end{align*}
		Furthermore, by the uniform bound for $\nabla v^m$ provided in Proposition \ref{propunifboundvm}, by \eqref{moreregB} and by Young's weighted inequality, the following estimates hold:
		\begin{align}
		&\begin{aligned}\label{epsilon}
			\bullet\,\left|\langle \dot{B}(t) \nabla v^m(t), \nabla V^m(t) \rangle_{L^2(\Omega_0)}\right| &\leq \|\dot{B}\|_{L^\infty((0,T)\times{\Omega_0})} \|\nabla v^m(t)\|_{L^2(\Omega_0)}  \|\nabla V^m(t)\|_{L^2(\Omega_0)} 
			\\
			&\leq \frac C2\|\dot{B}\|_{L^\infty((0,T)\times{\Omega_0})}\left( \frac{1}{\varepsilon} + \varepsilon  \|\nabla V^m(t)\|^2_{L^2(\Omega_0)}\right), \qquad \text{ for all } \varepsilon>0, 
		\end{aligned}	\\
		&\begin{aligned}
			\bullet\,\left|\langle \dot{B}(0) \nabla v^m(0), \nabla V^m(0) \rangle_{L^2(\Omega_0)}\right| &\leq \|\dot{B}(0)\|_{L^\infty({\Omega_0})} \|\nabla v^m(0)\|_{L^2(\Omega_0)}  \|\nabla V^m(0)\|_{L^2(\Omega_0)},\nonumber
			\\
			&\le \|\dot{B}(0)\|_{L^\infty({\Omega_0})} \|\nabla v_0\|_{L^2(\Omega_0)}  \|\nabla v_1\|_{L^2(\Omega_0)},\nonumber
		\end{aligned}\\			
		&\begin{aligned}
			\bullet\,\left|\int_{0}^{t}\langle \ddot{B}(s) \nabla v^m(s), \nabla V^m(s) \rangle_{L^2(\Omega_0)}\d s\right| \leq\frac 12 \|\ddot{B}\|_{L^\infty((0,T)\times {\Omega_0})} \left(CT+\int_{0}^{t} \|\nabla V^m(s)\|^2_{L^2(\Omega_0)}\d s\right), \nonumber
		\end{aligned}
			\\
			&\begin{aligned}
			\bullet\,	\left|\int_{0}^{t}\langle \dot{B}(s) \nabla V^m(s), \nabla V^m(s) \rangle_{L^2(\Omega_0)}\d s\right| \leq \|\dot{B}\|_{L^{\infty}((0,T)\times {\Omega_0})} \int_{0}^{t} \|\nabla V^m(s)\|^2_{L^2(\Omega_0)}\d s.\nonumber
			\end{aligned}
		\end{align}
		As for the term $J_3$, by the symmetry of the matrix $B$ and integrating by parts in time, we have
		\begin{align*}
			&\int_{0}^{t}\langle B(s) \nabla V^m(s), \nabla \dot{V}^m(s) \rangle_{L^2(\Omega_0)}\d s\\
			=&  \frac{1}{2} \langle B(t) \nabla V^m(t), \nabla V^m(t) \rangle_{L^2(\Omega_0)} - \frac{1}{2} \langle B(0) \nabla V^m(0), \nabla V^m(0) \rangle_{L^2(\Omega_0)}\\
			&-\frac{1}{2}\int_{0}^{t}\langle \dot{B}(s) \nabla V^m(s), \nabla V^m(s) \rangle_{L^2(\Omega_0)}\d s.
		\end{align*}
		Moreover, by the ellipticity of $B$ and again by \eqref{moreregB}, we deduce that 
		\begin{align*}
			&\bullet\,\frac{1}{2} \langle B(t) \nabla V^m(t), \nabla V^m(t) \rangle_{L^2(\Omega_0)} \geq \frac{c_B}{2} \norm{\nabla V^m(t)}^2_{L^2(\Omega_0)},
			\\
			&\bullet\,\frac{1}{2} \left|\langle B(0) \nabla V^m(0), \nabla V^m(0) \rangle_{L^2(\Omega_0)}\right| \leq \frac 12\|B(0)\|_{L^\infty(\Omega_0)}\|\nabla V^m(0)\|^2_{L^2(\Omega_0)}\le\|B(0)\|_{L^\infty(\Omega_0)}\|\nabla v_1\|^2_{L^2(\Omega_0)},
			\\
			&\bullet\, \left|\frac{1}{2}\int_{0}^{t}\langle \dot{B}(s) \nabla V^m(s), \nabla V^m(s) \rangle_{L^2(\Omega_0)}\d s\right| \leq \frac 12
			\|\dot{B}\|_{L^\infty((0,T)\times\Omega_0)}
			\int_{0}^{t} \norm{\nabla V^m(s)}^2_{L^2(\Omega_0)} \d s.
		\end{align*}
		We now focus on $J_4$ and $J_5$: by using \eqref{morerega} and by the uniform bound for $\nabla v^m$ provided by Proposition \ref{propunifboundvm} we obtain that
		\begin{align*}
			\bullet\,\left|\int_{0}^{t}\langle \dot{a}(s) \cdot \nabla v^m(s), \dot{V}^m(s) \rangle_{L^2(\Omega_0)} \d s\right| &\leq\frac 12 \|\dot{a}\|_{L^\infty((0,T)\times {\Omega_0})} \left(CT+\int_{0}^{t} \|\dot V^m(s)\|^2_{L^2(\Omega_0)}\d s\right),
			\\
			\bullet\,\left|\int_{0}^{t}\langle a(s) \cdot \nabla V^m(s), \dot{V}^m(s) \rangle_{L^2(\Omega_0)}\d s\right| &\leq\frac 12 \|a\|_{L^{\infty}([0,T]\times\Omega_0)} \int_{0}^{t} \!\left( \|\dot{V}^m(s)\|_{L^2(\Omega_0)}^2 + \norm{\nabla V^m(s)}_{L^2(\Omega_0)}^2 \right)\d s.
		\end{align*}
		As for the terms $J_6$ and $J_7$, due to \eqref{moreregb} and by observing that for each $s \in (0,t)$ one has $\dot{V}^m(s) \in H^1_0(\Omega_0)$, we obtain 
		\begin{align*}
			\bullet\,\left|\int_{0}^{t}\langle \dot{b}(s) \cdot\nabla V^m(s), \dot{V}^m(s) \rangle_{L^2(\Omega_0)}\d s\right| \leq &\,\frac 12  \|\dot{b}\|_{L^{\infty}((0,T)\times\Omega_0)} 
			\\
			& \times \int_{0}^{t} \left( \|\dot{V}^m(s)\|_{L^2(\Omega_0)}^2 + \norm{\nabla V^m(s)}_{L^2(\Omega_0)}^2 \right)\d s,
			\\
			\bullet\,\left|\int_{0}^{t}\langle b(s) \cdot\nabla \dot{V}^m(s), \dot{V}^m(s) \rangle_{L^2(\Omega_0)}\d s \right|=&  \left|\int_{0}^{t}\langle b(s), \nabla|\dot{V}^m(s)|^2 \rangle_{L^1(\Omega_0)}\d s \right|
			\\
			=& \left|\int_{0}^{t}\langle \div(b(s)), |\dot{V}^m(s)|^2 \rangle_{L^1(\Omega_0)}\d s \right|
			\\
			\leq &
			\norm{\div(b)}_{L^\infty((0,T)\times\Omega_0)} \int_{0}^{t}\|\dot{V}^m(s)\|_{L^2(\Omega_0)}^2\d s.
		\end{align*}
		Finally for the term $J_8$, by the regularity of $\dot{g}$ and by Young's inequality, we readily deduce that
		\begin{equation*}
			\left|\int_{0}^{t}\langle \dot{g}(s), \dot{V}^m(s) \rangle_{L^2(\Omega_0)}\d s\right| \leq \frac{1}{2} \norm{\dot g}^2_{L^2(0,T;L^2(\Omega_0)} + \frac{1}{2}\int_{0}^{t} \|\dot{V}^m(s)\|_{L^2(\Omega_0)}^2 \d s.
		\end{equation*}
		By the estimates obtained for $J_1,\dots,J_8$, by \eqref{eq:claim} and by a suitable choice of $\varepsilon$ in \eqref{epsilon}, we deduce that there exist positive constants $c_1,c_2$ and $c$ such that
		\begin{equation*}
			\|\dot{V}^m(t)\|_{L^2(\Omega_0)}^2 + c \|\nabla V^m(t)\|_{L^2(\Omega_0)}^2 \leq c_1 + c_2 \int_{0}^{t} \left( \|\dot{V}^m(s)\|_{L^2(\Omega_0)}^2 + \|\nabla V^m(s)\|_{L^2(\Omega_0)}^2 \right)\d s.
		\end{equation*}
		By using Gr\"onwall's inequality together with Poincaré inequality, we thus deduce the existence of a constant $C>0$ such that
		\begin{equation*}
			\sup_{0\leq t\leq T} \left(\|\dot{V}^m(t)\|_{L^2(\Omega_0)}^2 +\|V^m(t)\|_{H^1_0(\Omega_0)}^2\right) \leq C,
		\end{equation*}
		which yields, by the definition of $V^m$, to
		\begin{equation}\label{ineq: esssup vm *}
			\sup_{0\leq t\leq T} \left(\|\ddot{v}^m(t)\|_{L^2(\Omega_0)}^2 +\|\dot{v}^m(t)\|_{H^1_0(\Omega_0)}^2\right) \leq C.
		\end{equation}
		
		In order to conclude the proof we have to prove the claim \eqref{eq:claim} and to provide a bound on the $H^2$-norm of $v^m(t)$ uniformly with respect to $t$. 
		Let us fix $t \in [0,T]$. We define by $g^m(t)$ the $L^2$ projection of the function $g(t)$ on the finite dimensional space spanned by $w_1, \dots, w_m$. Then, by \eqref{eq:pdevm} and by using the fact that $\{w_k\}_{k\in \N}$ is an orthonormal basis of $L^2(\Omega_0)$ and an orthogonal basis of $H^1_0(\Omega_0)$, we deduce that
		\begin{equation*}
			\begin{aligned}
				- \langle \div (B(t) \nabla v^m(t)), \varphi \rangle_{L^2(\Omega_0)}
				= \langle g^m(t) - \ddot{v}^m(t) - a(t) \cdot \nabla v^m(t) + 2 b(t) \cdot\nabla \dot{v}^m(t), \varphi \rangle_{L^2(\Omega_0)},
			\end{aligned}
		\end{equation*}
		for every $t\in [0,T]$ and $\varphi \in H^1_0(\Omega_0)$. In particular, for every $t\in [0,T]$ we have
		\begin{equation}\label{eq: pdevm div Delta vm 2}
			- \div (B(t) \nabla v^m(t))
			=  g^m(t) - \ddot{v}^m(t) - a(t) \cdot \nabla v^m(t) + 2 b(t) \cdot\nabla \dot{v}^m(t),
		\end{equation}
		in the sense of distributions. By choosing $t=0$ in \eqref{eq: pdevm div Delta vm 2} and recalling that $\dot{V}^m(0)=\ddot{v}^m(0)$ we can thus estimate
		\begin{align*}
			\|\dot{V}^m(0)\|_{L^2(\Omega_0)}&\le \|g^m(0)\|_{L^2(\Omega_0)}+\|a(0)\|_{L^\infty(\Omega_0)}\|\nabla v^m(0)\|_{L^2(\Omega_0)} +2\|b(0)\|_{L^\infty(\Omega_0)}\|\nabla \dot{v}^m(0)\|_{L^2(\Omega_0)}\\
			&\quad\, +\|\div (B(0) \nabla v^m(0))\|_{L^2(\Omega_0)}\\
			& \le \|g(0)\|_{L^2(\Omega_0)}+\|a(0)\|_{L^\infty(\Omega_0)}\|\nabla v_0\|_{L^2(\Omega_0)} +2\|b(0)\|_{L^\infty(\Omega_0)}\|\nabla v_1\|_{L^2(\Omega_0)}\\
			&\quad\, +\|DB(0)\|_{L^\infty(\Omega_0)}\|\nabla v_0\|_{L^2(\Omega_0)}+\|B(0)\|_{L^\infty(\Omega_0)}\|v^m(0)\|_{H^2(\Omega_0)}.
		\end{align*}
		By arguing as in \cite[Chapter 7.1, Theorem 5]{Evans}, one can deduce that $\|v^m(0)\|_{H^2(\Omega_0)}\le C\|v_0\|_{H^2(\Omega_0)}$ for a suitable constant $C>0$, and so claim \eqref{eq:claim} is proved.
			
		 We now observe that, by Lemma \ref{lemmagrisvard}, there exists a positive constant $\widetilde{D}$ (depending on $\Omega_0$, on the uniform constant of ellipticity of $B$, which is independent on time, and on $\norm{B}_{C^{1,1}([0,T] \times {\overline{\Omega}_0})}$) such that
		\begin{equation}\label{inequ grisvard vm}
			\norm{v^m(t)}_{H^2(\Omega_0)} \leq \widetilde{D} \norm{\div (B(t) \nabla v^m(t))}_{L^2(\Omega_0)},\quad\text{for all }t\in [0,T].
		\end{equation}
		Then, by \eqref{eq: pdevm div Delta vm 2}, \eqref{inequ grisvard vm} and since $\norm{g^m(t)}_{L^2(\Omega_0)} \leq \norm{g(t)}_{L^2(\Omega_0)}$, we deduce that
		\begin{align*}
			&\quad\norm{v^m(t)}_{H^2(\Omega_0)}
			\\
			&\leq \widetilde{D}\left( \norm{g^m(t)}_{L^2(\Omega_0)} + \norm{ \ddot{v}^m(t)}_{L^2(\Omega_0)} +\norm{ a(t) \cdot \nabla v^m(t)}_{L^2(\Omega_0)} + \norm{ 2 b(t) \cdot\nabla \dot{v}^m(t)}_{L^2(\Omega_0)}\right)
			\\
			&\leq \widetilde{D}\left( \norm{g(t)}_{L^2(\Omega_0)} + \norm{ \ddot{v}^m(t)}_{L^2(\Omega_0)} + \|a\|_{L^{\infty}((0,T)\times\Omega_0)} \norm{\nabla v^m(t)}_{L^2(\Omega_0)} \right)
			\\
			& \quad + 2\widetilde{D}
			\left( \|b\|_{L^{\infty}((0,T)\times\Omega_0)} \norm{\nabla \dot{v}^m(t)}_{L^2(\Omega_0)} \right).
		\end{align*}
		We now use \eqref{ineq: esssup vm *}, \eqref{morerega}, \eqref{moreregb}, \eqref{moreregg}, and the uniform bounds provided by Proposition \ref{propunifboundvm} to conclude that
		\begin{equation*}
			\sup_{0\leq t\leq T} \norm{v^m(t)}_{H^2(\Omega_0)} \leq C,
		\end{equation*}
	for some constant $C>0$ and so the statement is proved.
	\end{proof}
	
	As a corollary we now obtain the main result of the section:
	\begin{teo}\label{teoH2regularityv}
	Assume \eqref{coerB}, \eqref{moreregcof}, \eqref{setregular} and \eqref{regulardata}. Then there exists a unique strong-weak solution $v$ of problem \eqref{eq:v} which satisfies:
		\begin{equation}\label{reg}
				\begin{aligned}
				&v \in L^\infty(0,T;H^2(\Omega_0)\cap H^1_0(\Omega_0)),
				\\
				&\dot{v} \in L^\infty(0,T; H^1_0(\Omega_0)) ,
				\\
				&\ddot{v} \in L^\infty(0,T; L^2(\Omega_0)).
			\end{aligned}
		\end{equation}
	\end{teo}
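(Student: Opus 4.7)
The plan is to upgrade the Galerkin construction already used for Theorem~\ref{cor}, replacing the first-order estimates of Proposition~\ref{propunifboundvm} with the second-order estimates of Proposition~\ref{teoH2regularityvm}, and then invoking Proposition~\ref{prop:unique} for uniqueness. First I would note that the functions $v^m$ defined in~\eqref{eq def vm} belong to $H^3(0,T;H^2(\Omega_0)\cap H^1_0(\Omega_0))$ (Remark~\ref{remhigherregcoeff}), and that Proposition~\ref{teoH2regularityvm} provides a bound, uniform in $m$, for the norm of $v^m$ in $L^\infty(0,T;H^2(\Omega_0)\cap H^1_0(\Omega_0))$, of $\dot v^m$ in $L^\infty(0,T;H^1_0(\Omega_0))$, and of $\ddot v^m$ in $L^\infty(0,T;L^2(\Omega_0))$. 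By the Banach--Alaoglu theorem applied in the weak-$*$ topology of these (duals of separable) spaces, there exist a function $v$ with the regularity~\eqref{reg} and a subsequence, not relabelled, such that $v^m \stackrel{*}{\rightharpoonup} v$, $\dot v^m \stackrel{*}{\rightharpoonup} \dot v$ and $\ddot v^m \stackrel{*}{\rightharpoonup} \ddot v$ in the respective spaces.

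Next, I would show that the limit $v$ solves~\eqref{def eq v weak space sol}. Fix $N\in\N$, coefficients $\alpha_k\in L^2(0,T)$ for $k=1,\dots,N$, and set $\phi(t)=\sum_{k=1}^{N}\alpha_k(t)w_k$. For $m\ge N$, multiplying~\eqref{eq:pdevm} by $\alpha_k(t)$, summing over $k$ and integrating in $t$ yields
\begin{equation*}
\begin{aligned}
\int_0^T\!\!\langle\ddot v^m,\phi\rangle_{L^2(\Omega_0)}\d t&+\int_0^T\!\!\langle B\nabla v^m,\nabla\phi\rangle_{L^2(\Omega_0)}\d t\\
&+\int_0^T\!\!\langle a\cdot\nabla v^m,\phi\rangle_{L^2(\Omega_0)}\d t-2\int_0^T\!\!\langle b\cdot\nabla\dot v^m,\phi\rangle_{L^2(\Omega_0)}\d t=\int_0^T\!\!\langle g,\phi\rangle_{L^2(\Omega_0)}\d t.
\end{aligned}
\end{equation*}
All terms pass to the limit by the weak-$*$ convergences above against the $L^1(0,T;L^2(\Omega_0))$ or $L^1(0,T;H^1_0(\Omega_0))$ test functions obtained after pairing $\phi$ with the corresponding coefficients. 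By density of $\bigcup_N \mathrm{span}\{w_1,\dots,w_N\}$ in $H^1_0(\Omega_0)$, the identity extends to all test functions $\phi$ of the form $\sum_k\alpha_k(t)\widetilde w_k$ with $\widetilde w_k\in H^1_0(\Omega_0)$, and hence to arbitrary $\phi\in L^2(0,T;H^1_0(\Omega_0))$ by approximation. A Lebesgue-point argument identical to the one used in Proposition~\ref{propequivweaksolv} (see~\eqref{list}) then yields the pointwise-in-$t$ formulation~\eqref{def eq v weak space sol}.

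The initial conditions are handled as follows. From \eqref{d^m_k(0)}--\eqref{dot d^m_k(0)} and the fact that $\{w_k\}$ is an orthonormal basis of $L^2(\Omega_0)$, one has $v^m(0)\to v_0$ and $\dot v^m(0)\to v_1$ in $L^2(\Omega_0)$. The regularity~\eqref{reg} ensures $v\in C^0([0,T];H^1_0(\Omega_0))$ and $\dot v\in C^0([0,T];L^2(\Omega_0))$ via classical Bochner-space embeddings, so the traces at $t=0$ are well-defined and coincide with $v_0,v_1$ by a standard argument using test functions that do not vanish at $t=0$. Uniqueness is then a direct application of Proposition~\ref{prop:unique}, since~\eqref{moreregcof} plainly implies~\eqref{eq:slightreg}.

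The substantive work has already been carried out in Proposition~\ref{teoH2regularityvm}, where the elliptic regularity provided by Lemma~\ref{lemmagrisvard} is used to upgrade the bound on $\div(B\nabla v^m)$ to an $H^2$-bound on $v^m$; the present theorem is essentially a packaging of those estimates via weak compactness. The only delicate point is therefore ensuring that the passage to the limit in the lower-order transport term $b\cdot\nabla \dot v^m$ is legitimate: here the weak-$*$ convergence $\dot v^m \stackrel{*}{\rightharpoonup} \dot v$ in $L^\infty(0,T;H^1_0(\Omega_0))$ (stronger than the $L^2(0,T;L^2(\Omega_0))$ convergence available in the proof of Theorem~\ref{cor}) is precisely what allows one to pair against $b\,\phi\in L^1(0,T;H^1_0(\Omega_0))$ without any integration by parts, so no further technical obstruction arises.
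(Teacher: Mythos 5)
Your proposal is correct and follows essentially the same route as the paper: extract (weak or weak-$*$) limits from the uniform second-order Galerkin bounds of Proposition~\ref{teoH2regularityvm}, pass to the limit in \eqref{eq:pdevm} exactly as in Theorem~\ref{cor} to identify the limit as a strong-weak solution with regularity \eqref{reg}, and invoke Proposition~\ref{prop:unique} for uniqueness. The only (immaterial) difference is that you work with weak-$*$ convergence in the $L^\infty$ Bochner spaces where the paper states weak convergence in the corresponding $L^2$ spaces, and you spell out the limit passage and initial conditions that the paper leaves implicit.
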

	\begin{proof}
		The estimates provided by Proposition~\ref{teoH2regularityvm} allow us to conclude that there exists a function $v$ fulfilling \eqref{reg} such that, up to a subsequence which we still denote by $v^m$, the following convergences hold:
		\begin{align*}
			& v^{m} \rightharpoonup v \quad\text{weakly in } L^2(0,T;H^2(\Omega_0)\cap H^1_0(\Omega_0))
			,
			\\
			& \dot{v}^{m} \rightharpoonup \dot{v} \quad\text{weakly in } L^2(0,T; H^1_0(\Omega_0)),
			\\
			& \ddot{v}^{m} \rightharpoonup \ddot{v} \quad\text{weakly in } L^2(0,T; L^2(\Omega_0)).
		\end{align*}
	The fact that $v$ is a strong-weak solution follows arguing as in Theorem~\ref{cor}. Uniqueness is instead ensured by Proposition~\ref{prop:unique}.
	\end{proof}
	\begin{rem}\label{rmk:highreg}
		By standard results on interpolation of spaces, property \eqref{reg} easily yields $v\in C^0([0,T];H^1_0(\Omega_0))$ and $\dot{v}\in C^0([0,T];L^2(\Omega_0))$. We actually point out that \eqref{reg} may also be improved by replacing $L^\infty$ with $C^0$ (as in Remark~\ref{rmk:reg}); this can be seen e.g. by exploiting the Semigroup approach (see \cite{DaPraZol} or \cite{CapLucTas}). However, this (slightly) stronger regularity will not be needed for the purposes of the present paper.
	\end{rem}

	\section{Existence, uniqueness and energy balance}\label{sec:application}
	
	In this section we come back to the original problem \eqref{eq:u}, still employing the approach of diffeomorphisms. In this way we are able to improve Theorem~\ref{teo energy balance}, by getting existence and uniqueness of weak solutions and by showing a more precise version of the energy balance (see Theorem~\ref{teo higher regularity u}). To this aim we need a lemma, for which we refer to \cite[Proposition 17.1]{Maggi} for details.
	\begin{lemma}\label{lemma:diff}
		Let $E\subseteq\R^M$ be an open set with Lipschitz boundary and let $\mathfrak{f}\colon \overline{E}\to \R^M$ be a diffeomorphism of class $C^1$ with inverse $\mathfrak{g}\in C^1$. Then $\mathfrak{f}(E)$ is an open set with Lipschitz boundary, and $\partial(\mathfrak{f}(E))=\mathfrak{f}(\partial E)$. Moreover
		\begin{equation}\label{eq:normal}
			\nu_{\mathfrak{f}(E)}(z)=\frac{D\mathfrak{g}(z)^T\nu_E(\mathfrak{g}(z))}{|D\mathfrak{g}(z)^T\nu_E(\mathfrak{g}(z))|},\quad\text{for $\mc H^{M-1}$-a.e. }z\in \partial (\mathfrak{f}(E)),
		\end{equation}
	and
	\begin{equation}\label{eq:normalintegral}
		\int_{\partial\mathfrak{f}(E)}\mathfrak{h}\,\nu_{\mathfrak{f}(E)}\d \mc H^{M-1}=\int_{\partial E}(\mathfrak{h}\circ \mathfrak f)|\det D\mathfrak{f}|(D\mathfrak{g}\circ\mathfrak{f})^T\nu_{E}\d \mc H^{M-1},
	\end{equation}
	for all $\mathfrak{h}\in L^1(\mathfrak{f}(E))$.
	\end{lemma}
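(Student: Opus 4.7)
The plan is to prove the three assertions in turn, using in essential ways the $C^1$ structure of $\mathfrak{f}$ and $\mathfrak{g}$, the chain rule, and the area formula for Lipschitz maps.

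\emph{Topology, boundary regularity, and normal formula.} Since $\mathfrak{f}$ is a $C^1$ diffeomorphism, it is in particular a homeomorphism of $\overline{E}$ onto its image, so $\mathfrak{f}(E)$ is open and $\mathfrak{f}(\partial E)=\partial\mathfrak{f}(E)$. To check that $\partial\mathfrak{f}(E)$ is Lipschitz, I would fix $y_0\in\partial E$, represent $\partial E$ locally as a Lipschitz graph in suitable rotated coordinates, and use the invertibility of $D\mathfrak{f}(y_0)$ together with the inverse function theorem to rewrite $\mathfrak{f}(\partial E)$ near $\mathfrak{f}(y_0)$ as a Lipschitz graph in a new rotated frame. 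For formula~(1), at $\mc H^{M-1}$-a.e.\ $y\in\partial E$ I would write $\partial E=\{\phi=0\}$ locally for a Lipschitz $\phi$ satisfying $\nu_E=\nabla\phi/|\nabla\phi|$; since $\partial\mathfrak{f}(E)=\{\phi\circ\mathfrak{g}=0\}$ in a neighborhood of $\mathfrak{f}(y)$, applying the chain rule $\nabla(\phi\circ\mathfrak{g})(z)=D\mathfrak{g}(z)^T\nabla\phi(\mathfrak{g}(z))$ and normalizing would yield~(1). The sign is fixed by continuity of $\mathfrak{f}$ and the fact that $E$ locally lies on a specific side of $\partial E$.

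\emph{Integral formula.} I would combine~(1) with the area formula applied to the Lipschitz injection $\mathfrak{f}|_{\partial E}\colon\partial E\to\partial\mathfrak{f}(E)$: for any integrable vector-valued $F$ on $\partial\mathfrak{f}(E)$,
\begin{equation*}
\int_{\partial\mathfrak{f}(E)}F(z)\,\d\mc H^{M-1}(z)=\int_{\partial E}F(\mathfrak{f}(y))\,J^{\partial E}_{\mathfrak{f}}(y)\,\d\mc H^{M-1}(y),
\end{equation*}
where $J^{\partial E}_\mathfrak{f}$ denotes the tangential Jacobian. The crucial step is the identification $J^{\partial E}_\mathfrak{f}(y)=|\det D\mathfrak{f}(y)|\,|D\mathfrak{g}(\mathfrak{f}(y))^T\nu_E(y)|$, which follows from the decomposition $|\det D\mathfrak{f}(y)|=J^{\partial E}_\mathfrak{f}(y)\,|D\mathfrak{f}(y)\nu_E(y)\cdot\nu_{\mathfrak{f}(E)}(\mathfrak{f}(y))|$ of the full Jacobian into tangential and normal contributions, together with the algebraic identity $(D\mathfrak{f}(y)\nu_E)\cdot(D\mathfrak{g}(\mathfrak{f}(y))^T\nu_E)=|\nu_E|^2=1$ and formula~(1). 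With this in hand, plugging $F(z)=\mathfrak{h}(z)\nu_{\mathfrak{f}(E)}(z)$ into the area formula makes the normalization factor from~(1) cancel against the norm in the Jacobian, yielding~(2); extending from continuous to general integrable $\mathfrak{h}$ is a routine density argument.

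The main obstacle I foresee is the tangential Jacobian identity, which requires carefully decomposing $D\mathfrak{f}(y)$ into its components tangential and normal to $\nu_E(y)$ and keeping track of orientations. An alternative route would be to apply the divergence theorem directly on $\mathfrak{f}(E)$, change variables via $z=\mathfrak{f}(y)$, and then exploit the Piola identity $\operatorname{div}(\det D\mathfrak{f}\cdot D\mathfrak{f}^{-T})=0$ to kill the bulk term; under the mere $C^1$ regularity of $\mathfrak{f}$, however, this would require a mollification argument to justify the distributional cancellation, making the area-formula path the more economical one.
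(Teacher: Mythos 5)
Your argument is correct, but it is worth noting that the paper does not actually prove this lemma: it is quoted from Maggi's book (Proposition 17.1), whose proof runs along the \emph{second} route you mention only in passing. There, one tests the distributional definition of perimeter, i.e.\ computes $\int_{\mathfrak{f}(E)}\div T\d x$ for $T\in C^1_c$, changes variables, and invokes the Piola-type identity $(\div T)\circ\mathfrak{f}\,|\det D\mathfrak{f}|=\div\bigl(|\det D\mathfrak{f}|\,(D\mathfrak{f})^{-1}(T\circ\mathfrak{f})\bigr)$, justified for merely $C^1$ maps by smooth approximation — exactly the mollification issue you flag. Your primary route is different and equally valid: it rests on the area formula for the Lipschitz injection $\mathfrak{f}|_{\partial E}$ together with the tangential Jacobian identity $J^{\partial E}_{\mathfrak{f}}=|\det D\mathfrak{f}|\,|(D\mathfrak{g}\circ\mathfrak{f})^T\nu_E|$, which checks out: for an invertible $A$ and $H=\nu^\perp$ one has $|\det A|=J^H(A)\,|P_{(AH)^\perp}A\nu|$, the unit normal to $AH$ is $A^{-T}\nu/|A^{-T}\nu|$, and $(A\nu)\cdot(A^{-T}\nu)=1$ gives $J^H(A)=|\det A|\,|A^{-T}\nu|$; the normalization in \eqref{eq:normal} then cancels against this factor to yield \eqref{eq:normalintegral}. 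What your approach buys is that it avoids the distributional/mollification step and localizes naturally on the boundary; what it costs is that you must separately establish that $\mathfrak{f}$ preserves Lipschitz boundaries and that the a.e.\ chain rule for the Lipschitz level-set function holds $\mc H^{M-1}$-a.e.\ on $\partial E$ (both standard: the former via the cone condition, since a linear isomorphism maps a Lipschitz graph to a Lipschitz graph over a rotated hyperplane and the nonlinear remainder is a small $C^1$ perturbation; the latter via Rademacher applied to the local graph parametrization). These details are asserted rather than proved in your sketch, but the outline is sound and no step would fail.
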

	
	\begin{cor}\label{cor:O}
		Assume \eqref{hypsetreg} and let $\Phi,\Psi$ be as in \eqref{assumptionsdiff} and satisfy \ref{H1}. Then the set $\mc O$ introduced in \eqref{eq:O} is open with Lipschitz boundary, and \eqref{bdryO} is satisfied.\par 
		Furthermore one has
		\begin{equation}\label{eq:nuO}
			\nu_{\mc O}(t,x)=(\nu_{\mc O}^t(t,x),\nu_{\mc O}^x(t,x))=\frac{(-\omega(t,x),\nu_{\Omega_t}(x))}{\sqrt{1+\omega(t,x)^2}},\quad\text{for all $t\in [0,T]$ and $\mc H^{N-1}$-a.e. }x\in \partial \Omega_t,
		\end{equation}
		where we introduced the scalar normal velocity
		\begin{equation}\label{eq:omega}
			\omega(t,x):=\dot{\Phi}(t,\Psi(t,x))\cdot\nu_{\Omega_t}(x),\quad \text{ for all $t\in [0,T]$ and $\mc H^{N-1}$-a.e. }x\in \partial \Omega_t.
		\end{equation}
		Moreover the following identity holds true for every $\mathfrak{h}\in L^1(\Gamma)$:
		\begin{equation}\label{eq:integralgamma}
			\int_{\Gamma}\mathfrak{h}\,\nu_{\mc O}\d \mc H^N=\int_{0}^{T}\int_{\partial\Omega_t}\mathfrak{h}(t,x) \begin{pmatrix}
				-\omega(t,x)\\ \nu_{\Omega_t}(x)
			\end{pmatrix}\d \mc H^{N-1}(x)\d t.
		\end{equation}
	\end{cor}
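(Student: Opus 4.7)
My plan is to apply Lemma~\ref{lemma:diff} to the cylinder $E := (0,T)\times\Omega_0$ and to the ``space-time'' diffeomorphism $\mathfrak{F}\colon [0,T]\times{\overline{\Omega}_0}\to [0,T]\times\R^N$ defined by $\mathfrak{F}(t,y):=(t,\Phi(t,y))$, with inverse $\mathfrak{G}(t,x):=(t,\Psi(t,x))$. By \eqref{hypsetreg} the set $E$ is Lipschitz, and by \ref{H1} both $\mathfrak{F}$ and $\mathfrak{G}$ are of class $C^1$; moreover $\mathfrak{F}(E)=\mc O$ by \eqref{assumptionsdiff}. Lemma~\ref{lemma:diff} then yields that $\mc O$ is open with Lipschitz boundary and $\partial\mc O=\mathfrak{F}(\partial E)$. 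Since $\partial E$ decomposes as the union of the two caps $\{0\}\times{\overline{\Omega}_0}$ and $\{T\}\times{\overline{\Omega}_0}$ and the lateral face $[0,T]\times\partial\Omega_0$, using \eqref{eq:phiidentity} on the cap at $t=0$ and the fact that $\Phi(t,\partial\Omega_0)=\partial\Omega_t$ on the lateral face, we get exactly the decomposition \eqref{bdryO}.

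For the expression \eqref{eq:nuO}, I would use formula \eqref{eq:normal}. Because of the product structure of $\mathfrak{F}$, the Jacobian $D\mathfrak{G}$ has the block form
\[
D\mathfrak{G}(t,x)=\begin{pmatrix}1 & 0\\ \dot\Psi(t,x) & D\Psi(t,x)\end{pmatrix},
\]
and on the lateral face of $E$ one has $\nu_E(t,y)=(0,\nu_{\Omega_0}(y))$. Therefore, for $(t,x)\in\Gamma$ and $y=\Psi(t,x)$,
\[
D\mathfrak{G}(t,x)^T\nu_E(t,y)=\begin{pmatrix}\dot\Psi(t,x)\cdot\nu_{\Omega_0}(y)\\ D\Psi(t,x)^T\nu_{\Omega_0}(y)\end{pmatrix}.
\]
Now I would apply Lemma~\ref{lemma:diff} once more, this time to the \emph{spatial} diffeomorphism $\Phi(t,\cdot)\colon\overline\Omega_0\to\overline\Omega_t$: this gives $D\Psi(t,x)^T\nu_{\Omega_0}(y)=\mu(t,x)\nu_{\Omega_t}(x)$ with $\mu(t,x):=|D\Psi(t,x)^T\nu_{\Omega_0}(y)|>0$. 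Combining with identity \eqref{eq: dot Psi}, one has $\dot\Psi(t,x)\cdot\nu_{\Omega_0}(y)=-\dot\Phi(t,y)\cdot D\Psi(t,x)^T\nu_{\Omega_0}(y)=-\mu(t,x)\,\omega(t,x)$. Substituting into \eqref{eq:normal} the normalization factor $\mu$ cancels, yielding \eqref{eq:nuO}.

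The integral identity \eqref{eq:integralgamma} follows from \eqref{eq:normalintegral} applied to $\mathfrak{F}$. Since $|\det D\mathfrak{F}(t,y)|=\det D\Phi(t,y)$ by the block form (and \eqref{eq:detpos}), the previous computation for the direction vector on the lateral face gives
\[
\int_{\Gamma}\mathfrak h\,\nu_{\mc O}\d\mc H^N=\int_0^T\!\!\int_{\partial\Omega_0}\mathfrak h(t,\Phi(t,y))\,\det D\Phi(t,y)\,\mu(t,\Phi(t,y))\begin{pmatrix}-\omega(t,\Phi(t,y))\\ \nu_{\Omega_t}(\Phi(t,y))\end{pmatrix}\d\mc H^{N-1}(y)\d t,
\]
after splitting the surface measure on the lateral face as $\d\mc H^{N-1}(y)\d t$. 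Finally, another application of \eqref{eq:normalintegral} to $\Phi(t,\cdot)$ shows that $\det D\Phi(t,y)\,\mu(t,\Phi(t,y))$ is exactly the surface Jacobian of the change of variables $x=\Phi(t,y)$ between $\partial\Omega_0$ and $\partial\Omega_t$, and the desired identity \eqref{eq:integralgamma} follows.

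The only real obstacle is keeping track of the scalar factor $\mu$, which appears once in the numerator of $\nu_{\mc O}$ (and cancels with the norm in the denominator) and once in the area element for the change of variables on $\partial\Omega_t$; recognizing that both occurrences come from the \emph{same} application of Lemma~\ref{lemma:diff} to the spatial map $\Phi(t,\cdot)$ makes both computations line up cleanly.
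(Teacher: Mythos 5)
Your proposal is correct and follows essentially the same route as the paper: the same space-time diffeomorphism $(t,y)\mapsto(t,\Phi(t,y))$, the same three applications of Lemma~\ref{lemma:diff} (one to the space-time map for \eqref{bdryO}, \eqref{eq:nuO} and \eqref{eq:integralgamma}, and two to the spatial map $\Phi(t,\cdot)$ for the normal and for the surface Jacobian), the same block-matrix computation of $D\mathfrak{G}^T\nu_E$, and the same use of \eqref{eq: dot Psi}. The only cosmetic difference is that you name the scalar factor $\mu=|D\Psi(t,x)^T\nu_{\Omega_0}(\Psi(t,x))|$ explicitly, whereas the paper carries the unnormalized vector $D\Psi^T\nu_{\Omega_0}$ throughout and normalizes only at the end.
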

	\begin{rem}
		We point out that actually the scalar normal velocity $\omega$ does not depend on the choice of the diffeomorphisms $\Phi$ and $\Psi$, but it is intrinsically related to the set $\mc O$ (and so to the family $\{\Omega_t\}_{t\in [0,T]}$). Indeed by \eqref{eq:nuO} we have
		\begin{equation}\label{eq:relomega}
			\omega(t,x)=-\frac{\nu_{\mc O}^t(t,x)}{|\nu_{\mc O}^x(t,x)|}, \,\text{ for all $t\in [0,T]$ and $\mc H^{N-1}$-a.e. }x\in \partial \Omega_t.
		\end{equation}
	Moreover, it is immediate to check that the following statements hold:
	\begin{itemize}
		\item if \ref{H2} is in force, then $\|\omega\|_{L^\infty(\Gamma)}<1$;
		\item if \eqref{hypsetmonotone} is in force, then $\omega(t,x)\ge 0$ for all $t\in [0,T]$ and $\mc H^{N-1}$-a.e. $x\in \partial \Omega_t$.
	\end{itemize}
	Indeed, the former fact is a direct consequence of the explicit form \eqref{eq:omega}, while the latter can be inferred by \eqref{eq:relomega} since under \eqref{hypsetmonotone} one has $\nu_{\mc O}^t(t,x)\ge 0$.
	\end{rem}
	\begin{proof}[Proof of Corollary~\ref{cor:O}]
		We introduce the maps $\widetilde{\Phi}\colon [0,T]\times{\overline{\Omega}_0}\to \overline{\mc O}$ and $\widetilde{\Psi}\colon \overline{\mc O}\to [0,T]\times{\overline{\Omega}_0}$ defined as
		\begin{equation*}
			\widetilde{\Phi}(t,y):=(t,\Phi(t,y)),\quad\text{and}\quad \widetilde{\Psi}(t,x):=(t,\Psi(t,x)).
		\end{equation*}
		It is immediate to check that $\overline{\mc O}=\widetilde{\Phi}([0,T]\times{\overline{\Omega}_0})$ and that, due to \ref{H1}, $\widetilde{\Phi}$ is a diffeomorphism of class $C^{1,1}$ with inverse $\widetilde{\Psi}\in C^{1,1}$. By Lemma~\ref{lemma:diff} the statement regarding $\mc O$ is hence verified.\par 
		By using \eqref{eq:normal} with the function $\Phi(t,\cdot)$ we now observe that for all $t\in[0,T]$ there holds
			\begin{equation}\label{eq:nut}
				\nu_{\Omega_t}(x)=\frac{D\Psi(t,x)^T\nu_{\Omega_0}(\Psi(t,x))}{|D\Psi(t,x)^T\nu_{\Omega_0}(\Psi(t,x))|},\quad\text{for $\mc H^{N-1}$-a.e. }x\in \partial \Omega_t.
			\end{equation}
	By using the same formula with $\widetilde{\Phi}$ we instead obtain:
	\begin{equation}\label{eq:nuOcomputation}
		\nu_{\mc O}(t,x)=\frac{D_{(t,x)}\widetilde\Psi(t,x)^T\nu_{(0,T)\times\Omega_0}(t,\Psi(t,x))}{|D_{(t,x)}\widetilde\Psi(t,x)^T\nu_{(0,T)\times\Omega_0}(t,\Psi(t,x))|},\quad\text{for $\mc H^{N}$-a.e. $(t,x)\in \Gamma$}.
	\end{equation}
	By recalling \eqref{eq: dot Psi} we now compute
	\begin{align}\label{eq:matrix}
		D_{(t,x)}\widetilde\Psi(t,x)^T\nu_{(0,T)\times\Omega_0}(t,\Psi(t,x))&=\begin{bmatrix}
			\begin{array}{c|c}
			1 & \dot{\Psi}(t,x)\\
			\hline 0 & D\Psi(t,x)^T
		\end{array}
\end{bmatrix}\begin{pmatrix}
0\\ \nu_{\Omega_0}(\Psi(t,x))
\end{pmatrix}=\begin{pmatrix}
\dot{\Psi}(t,x)\cdot \nu_{\Omega_0}(\Psi(t,x))\nonumber\\ D\Psi(t,x)^T\nu_{\Omega_0}(\Psi(t,x))
\end{pmatrix}\\
&= \begin{pmatrix}
	-\dot{\Phi}(t,\Psi(t,x))\cdot D\Psi(t,x)^T\nu_{\Omega_0}(\Psi(t,x))\\ D\Psi(t,x)^T\nu_{\Omega_0}(\Psi(t,x))
\end{pmatrix}.
\end{align}
Since $D_{(t,x)}\widetilde\Psi(t,x)^T\in\R^{(N+1)\times (N+1)}$, in the first equality above we have gathered its components using
the row vector $\dot{\Psi}(t,x)\in\R^N$ and the matrix $D\Psi(t,x)^T\in\R^{N\times N}$.
By plugging the last equality into \eqref{eq:nuOcomputation} and recalling \eqref{eq:nut} and \eqref{eq:omega} we get \eqref{eq:nuO}.\par 
In order to prove \eqref{eq:integralgamma} we exploit \eqref{eq:normalintegral} for the function $\widetilde{\Phi}$ deducing
\begin{align*}
	\int_{\Gamma}\mathfrak{h}\,\nu_{\mc O}\d \mc H^N&=\int_{(0,T)\times\partial\Omega_0}\mathfrak{h}(\widetilde{\Phi}(t,y))|\det D_{(t,y)}\widetilde{\Phi}(t,y)|D_{(t,x)}\widetilde{\Psi}(\widetilde{\Phi}(t,y))^T\nu_{(0,T)\times\Omega_0}(t,y)\d\mc H^{N}(t,y)\\
	&=\int_{0}^{T}\int_{\partial\Omega_0}\mathfrak{h}(t,\Phi(t,y))\det D\Phi(t,y)\begin{pmatrix}
		-\dot{\Phi}(t,y)\cdot D\Psi(t,\Phi(t,y))^T\nu_{\Omega_0}(y)\\ D\Psi(t,\Phi(t,y))^T\nu_{\Omega_0}(y))
	\end{pmatrix}\d\mc H^{N-1}(y)\d t,
\end{align*}
where we used the fact that $|\det D_{(t,y)}\widetilde{\Phi}(t,y)|=\det D\Phi(t,y)$ and \eqref{eq:matrix} with $x=\Phi(t,y)$. By using again \eqref{eq:normalintegral} with $\Phi(t,\cdot)$ for the integral over $\partial\Omega_0$ we conclude by recalling the formula \eqref{eq:omega} for $\omega$.
	\end{proof}

	Combining the results of Sections~\ref{sec:fixeddomain}, \ref{sec:hyperbolic} and exploiting the previous corollary we can now state the following theorem, which rigorously extends Theorem~\ref{teo energy balance}:
	\begin{teo}\label{teo higher regularity u}
		Assume \eqref{hypsetreg}, \eqref{setregular} and let $\Phi,\Psi$ be as in \eqref{assumptionsdiff} and satisfy \ref{H1'} and \ref{H2}. Let the forcing term $f$ be in $H^1(\mc O)$ and assume the initial data satisfy 
		\begin{equation}\label{eq:regindata}
			u_0 \in H^2(\Omega_0) \cap H^1_0(\Omega_0), \quad \text{ and }\quad  u_1 + \dot{\Phi}(0,\cdot) \cdot \nabla u_0 \in H^1_0(\Omega_0).
		\end{equation}
		Then there exists a unique weak solution $u$ of problem \eqref{eq:u} in the sense of Definition \ref{defweaksolu}, which satisfies 
		\begin{align*}
			&u \in L^\infty(0,T;H^2(\Omega_t)\cap H^1_0(\Omega_t)),
			\\
			&\dot{u} \in L^\infty(0,T; H^1(\Omega_t)),
			\\
			&\ddot{u} \in L^\infty(0,T; L^2(\Omega_t)).
		\end{align*}
		Moreover for every $t \in [0,T]$ the following energy balance holds true:
		\begin{equation}\label{eq:enbaluimproved}
			\begin{aligned}
				&\frac 12\|\dot u(t)\|^2_{L^2(\Omega_t)}+\frac 12\|\nabla u(t)\|^2_{L^2(\Omega_t)}+\int_{0}^{t}\int_{\partial\Omega_s}\frac{\omega(s,x)}{2}(1{-}\omega(s,x)^2)\left(\frac{\partial u}{\partial\nu_{\Omega_s}}(s,x)\right)^2\d\mc H^{N-1}(x)\d s\\
				=& \frac 12\|u_1\|^2_{L^2(\Omega_0)}+\frac 12\|\nabla u_0|^2_{L^2(\Omega_0)}+ \int_{0}^{t}\langle f(s), \dot{u}(s)\rangle_{L^2(\Omega_s)} \d s,
			\end{aligned}
		\end{equation}
	where the scalar normal velocity $\omega$ has been introduced in \eqref{eq:omega}.
	\end{teo}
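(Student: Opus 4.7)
The plan is to assemble the theorem from the machinery built in Sections~\ref{sec:fixeddomain} and \ref{sec:hyperbolic}, together with the energy balance of Theorem~\ref{teo energy balance} rewritten via the geometric formulas of Corollary~\ref{cor:O}. First I would pass to the fixed-domain problem \eqref{eq:v} via the change of variables \eqref{def eq v}. Under assumptions \ref{H1'}, \ref{H2}, and $f\in H^1(\mc O)$, Proposition~\ref{propregularitycoeff} guarantees that the coefficients $B,a,b$ and the forcing term $g$ satisfy the stronger regularity \eqref{moreregcof}, that $B$ is uniformly elliptic by \eqref{coerB}, and that the transformed initial data $v_0=u_0$ and $v_1=u_1+\dot{\Phi}(0,\cdot)\cdot\nabla u_0$ lie in $H^2(\Omega_0)\cap H^1_0(\Omega_0)$ and $H^1_0(\Omega_0)$ respectively, thanks to \eqref{eq:regindata}. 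Since $\Omega_0$ is convex or of class $C^2$ by \eqref{setregular}, Theorem~\ref{teoH2regularityv} then yields a unique strong-weak solution $v$ of \eqref{eq:v} with the regularity \eqref{reg}; by Proposition~\ref{propequivweaksolv} this is also a weak solution, and by Theorem~\ref{teoequivalenceweaksol} the function $u(t,x)=v(t,\Psi(t,x))$ is the unique weak solution of \eqref{eq:u}.

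Next I would transfer the regularity from $v$ to $u$ via the chain rule. Using \ref{H1'}, $\Psi\in C^{2,1}(\overline{\mc O})$, so the composition $u(t,\cdot)=v(t,\Psi(t,\cdot))$ yields $u\in L^\infty(0,T;H^2(\Omega_t)\cap H^1_0(\Omega_t))$. Differentiating once in time,
\begin{equation*}
\dot u(t,x)=\dot v(t,\Psi(t,x))+\nabla v(t,\Psi(t,x))\cdot\dot\Psi(t,x),
\end{equation*}
and since $\dot v\in L^\infty(0,T;H^1_0(\Omega_0))$ and $\nabla v\in L^\infty(0,T;H^1(\Omega_0))$ while $\dot\Psi\in C^{1,1}$, we obtain $\dot u\in L^\infty(0,T;H^1(\Omega_t))$. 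Differentiating once more,
\begin{equation*}
\ddot u(t,x)=\ddot v(t,\Psi(t,x))+2\nabla\dot v(t,\Psi(t,x))\cdot\dot\Psi(t,x)+\nabla v(t,\Psi(t,x))\cdot\ddot\Psi(t,x)+D^2 v(t,\Psi(t,x))[\dot\Psi(t,x),\dot\Psi(t,x)],
\end{equation*}
and each of the four terms belongs to $L^\infty(0,T;L^2(\Omega_t))$ by \eqref{reg} and \ref{H1'}. In particular, the regularity \eqref{addreg} required by Theorem~\ref{teo energy balance} is satisfied, and by Corollary~\ref{cor:O} the geometric condition \eqref{bdryO} also holds.

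Finally, I would apply Theorem~\ref{teo energy balance} and rewrite the lateral boundary term in \eqref{eq:enbalu} using Corollary~\ref{cor:O}. Since $u\equiv 0$ on $\Gamma$, the gradient $\nabla u$ is normal to $\partial\Omega_t$ at each time, so $|\nabla u|^2=\left(\partial u/\partial\nu_{\Omega_t}\right)^2$ a.e.\ on $\Gamma$. Using the identification \eqref{eq:relomega}, namely $\omega=-\nu_{\mc O}^t/|\nu_{\mc O}^x|$, together with $|\nu_{\mc O}^x|=1/\sqrt{1+\omega^2}$ from \eqref{eq:nuO}, we have $\nu_{\mc O}^t=-\omega/\sqrt{1+\omega^2}$. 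Moreover, a direct consequence of \eqref{eq:integralgamma} (by comparing the time component) is the surface measure identity $\d\mc H^N\llcorner\Gamma=\sqrt{1+\omega^2}\,\d\mc H^{N-1}(x)\,\d t$, so that
\begin{equation*}
-\int_{\Gamma_t}\frac{\nu_{\mc O}^t}{2}\Bigl[1-\Bigl(\tfrac{\nu_{\mc O}^t}{|\nu_{\mc O}^x|}\Bigr)^2\Bigr]|\nabla u|^2\d\mc H^N=\int_0^t\!\!\int_{\partial\Omega_s}\frac{\omega(s,x)}{2}(1-\omega(s,x)^2)\left(\frac{\partial u}{\partial\nu_{\Omega_s}}(s,x)\right)^2\d\mc H^{N-1}(x)\,\d s,
\end{equation*}
which gives \eqref{eq:enbaluimproved}.

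The main obstacle I expect is the careful bookkeeping in the second paragraph, namely verifying that differentiating the composition $v(t,\Psi(t,x))$ does not exceed the available regularity of $v$ and $\Psi$; this is where the $C^{2,1}$ hypothesis \ref{H1'} on the diffeomorphisms becomes essential, since one needs second spatial derivatives of $v$ and a Lipschitz control on $\ddot\Psi$. All remaining steps are either applications of results already proven or direct geometric computations on the lateral boundary $\Gamma$.
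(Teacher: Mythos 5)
Your proposal is correct and follows essentially the same route as the paper: existence, uniqueness and regularity are obtained by combining Proposition~\ref{propregularitycoeff}, Theorem~\ref{teoH2regularityv}, Proposition~\ref{propequivweaksolv} and Theorem~\ref{teoequivalenceweaksol}, and the energy balance is obtained by rewriting the lateral boundary term of \eqref{eq:enbalu} via \eqref{eq:integralgamma}, \eqref{eq:relomega} and \eqref{eq:gradientnormal}. The only difference is that you spell out the chain-rule transfer of regularity from $v$ to $u$ and the surface-measure identity on $\Gamma$, which the paper leaves implicit.
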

\begin{rem}\label{rmk:bdry}
	We point out that since $u_0\in H^2(\Omega_0) \cap H^1_0(\Omega_0)$ there must hold $\nabla u_0=\frac{\partial u_0}{\partial \nu_{\Omega_0}}\nu_{\Omega_0}$ on $\partial\Omega_0$. So the second compatibility condition in \eqref{eq:regindata} is actually equivalent to
	\begin{equation*}
			 u_1 \in H^1(\Omega_0)\qquad\quad\text{and}\qquad\quad u_1+\omega(0,\cdot)\frac{\partial u_0}{\partial \nu_{\Omega_0}}=0\quad \text{on }\partial\Omega_0.
	\end{equation*}
	For the same reason we deduce that
		\begin{equation}\label{eq:gradientnormal}
			\nabla u(t,x)=\frac{\partial u}{\partial\nu_{\Omega_t}}(t,x)\nu_{\Omega_t}(x), \quad\text{ for a.e. $t\in [0,T]$ and $\mc H^{N-1}$-a.e. }x\in \partial\Omega_t.
		\end{equation}
\end{rem}

	\begin{rem}\label{rem:A4}
		Under the additional assumptions of Remark~\ref{rem:nonhomogeneous} and \ref{H2A}, the energy balance \eqref{eq:enbaluimproved} now reads as in Remark~\ref{rem:A1} with the boundary term taking the form
		\begin{equation*}
			\int_{0}^{t}\int_{\partial\Omega_s}\frac{\omega(s,x)}{2}\left(|\nu_{\Omega_s}(x)|^2_{A(s,x)}-\omega(s,x)^2\right)\left(\frac{\partial u}{\partial\nu_{\Omega_s}}(s,x)\right)^2\d\mc H^{N-1}(x)\d s.
		\end{equation*}
	\end{rem}

	\begin{proof}[Proof of Theorem~\ref{teo higher regularity u}]
		Existence and uniqueness of the weak solution $u$ satisfying the regularity properties in the statement follow by combining Propositions~\ref{propregularitycoeff}, \ref{propequivweaksolv} and Theorems~\ref{teoequivalenceweaksol}, \ref{teoH2regularityv}. \par 
		To prove \eqref{eq:enbaluimproved} we just need to show that
		\begin{equation*}
			-\int_{\Gamma_t}\frac{\nu_{\mc O}^t}{2}\left[1-\left(\frac{\nu_{\mc O}^t}{|\nu_{\mc O}^x|}\right)^2\right]|\nabla u|^2\d \mc H^N=\int_{0}^{t}\int_{\partial\Omega_s}\frac{\omega(s,x)}{2}(1-\omega(s,x)^2)\left(\frac{\partial u}{\partial\nu_{\Omega_s}}(s,x)\right)^2\d\mc H^{N-1}(x)\d s,
		\end{equation*}
		and then exploit \eqref{eq:enbalu}. The above equality easily follows by applying \eqref{eq:integralgamma} together with \eqref{eq:relomega} and \eqref{eq:gradientnormal}.
	\end{proof}
	
	\subsection{Moving boundary conditions}\label{sec:movingbdrycond}
	We now show how our result can be applied to problems driven by time-dependent boundary conditions, which often appear in mechanical models of debonding. To this aim we ask that the boundary of the set $\Omega_t$ is composed by a fixed part $\Lambda^1$ and a moving one $\Lambda^2_t$. Namely we require that
	\begin{equation}\label{eq:fixedbdry}
		\begin{gathered}
		\text{for all $t\in [0,T]$ there holds } \partial\Omega_t=\Lambda^1\cup\Lambda_t^2,\text{ where $\Lambda^1$ and $\Lambda^2_t$ are $\mc H^{N-1}$-measurable sets,}\\
		\text{with $\Lambda^1$ independent of time $t$, and satisfying $\mc H^{N-1}(\Lambda^1\cap \Lambda^2_t)=0$}.
	\end{gathered}
	\end{equation}
 	Consistently with the previous notation we define
 	\begin{equation*}
 		\Gamma^1:=(0,T)\times \Lambda^1,\quad\text{ and }\quad \Gamma^2:=\bigcup_{t\in(0,T)}\{t\}\times \Lambda^2_t,
 	\end{equation*}
 	so that $\Gamma=\Gamma^1\cup \Gamma^2$. For the sake of clarity, we instead denote by $\nu_{\Lambda^1}$ and $\nu_{\Lambda^2_t}$ the outward unit normal to $\Omega_t$ restricted to $\Lambda^1$ and $\Lambda^2_t$, respectively.\par
 	From a mechanical point of view, it is meaningful to prescribe a time-dependent external loading $W$ on the fixed boundary $\Gamma^1$, while homogeneous Dirichlet boundary conditions are assumed on the moving boundary $\Gamma^2$. We thus consider the problem
	\begin{equation}\label{eq:U}
		\begin{cases}
			\ddot{U}(t,x)-\Delta U(t,x)=0, &(t,x)\in\mc O,\\
			U(t,x)=W(t,x), &(t,x)\in \Gamma^1,\\
			U(t,x)=0, & (t,x)\in \Gamma^2,\\
			U(0,x)=U_0(x),& x \in \Omega_0,\\
			\dot{U}(0,x)=U_1(x), & x \in \Omega_0.
		\end{cases}
	\end{equation}
	The notion of weak solution for problem \eqref{eq:U} is given by Definition~\ref{defweaksolu}, with the obvious changes regarding the boundary conditions. Such problem can be tackled by assuming that $W$ is the trace on $\Gamma^1$ of a regular function, still denoted by $W$, which is everywhere defined and vanishes on $\Gamma^2$ (as stated in Theorem~\ref{teo:U}). Indeed, by considering 
	\begin{equation}\label{eq:uw}
		u(t,x):=U(t,x)-W(t,x),
	\end{equation}
one can resort to Theorem~\ref{teo higher regularity u}.
	\begin{teo}\label{teo:U}
		Assume \eqref{hypsetreg}, \eqref{setregular}, \eqref{eq:fixedbdry} and let $\Phi,\Psi$ be as in \eqref{assumptionsdiff} and satisfy \ref{H1'} and \ref{H2}. Let the external loading $W$ and the initial data satisfy:
		\begin{align*}
			&\bullet\,	W\in H^3(0,T;L^2_{\rm loc}(\R^N))\cap H^2(0,T;H^1_{\rm loc}(\R^N))\cap H^1(0,T;H^2_{\rm loc}(\R^N))\cap L^2(0,T;H^3_{\rm loc}(\R^N)),\\
			&\quad\,\text{such that }\quad W=0\text{ on }\Gamma^2;\\
			&\bullet\, U_0 \in H^2(\Omega_0), \quad \text{ such that }\quad U_0=W(0,\cdot) \text{ on $\Lambda^1$ and }U_0=0 \text{ on $\Lambda^2_0$};\\
			&\bullet\, U_1\in H^1(\Omega_0), \quad \text{ such that }\quad U_1=\dot W(0,\cdot)\text{ on $\Lambda^1$ and } U_1 + \omega(0,\cdot) \frac{\partial U_0}{\partial\nu_{\Lambda^2_0}}=0 \text{ on $\Lambda^2_0$}.
		\end{align*}
		Then there exists a unique weak solution $U$ of problem \eqref{eq:U}, which satisfies:
		\begin{align*}
			&U \in L^\infty(0,T;H^2(\Omega_t)),
			\\
			&\dot{U} \in L^\infty(0,T; H^1(\Omega_t)),
			\\
			&\ddot{U} \in L^\infty(0,T; L^2(\Omega_t)).
		\end{align*}
		Moreover for every $t \in [0,T]$ the following energy balance holds true:
		\begin{equation*}
			\begin{aligned}
				&\frac 12\|\dot U(t)\|^2_{L^2(\Omega_t)}+\frac 12\|\nabla U(t)\|^2_{L^2(\Omega_t)}+\int_{0}^{t}\int_{\Lambda_s^2}\frac{\omega(s,x)}{2}(1{-}\omega(s,x)^2)\left(\frac{\partial U}{\partial\nu_{\Lambda^2_s}}(s,x)\right)^2\d\mc H^{N-1}(x)\d s\\
				= &\frac 12\|U_1\|^2_{L^2(\Omega_0)}+\frac 12\|\nabla U_0|^2_{L^2(\Omega_0)}+ \int_{0}^{t} \int_{\Lambda^1}\dot W(s,x) \frac{\partial U}{\partial\nu_{\Lambda^1}}(s,x) \d\mc H^{N-1}(x)\d s.
			\end{aligned}
		\end{equation*}
	\end{teo}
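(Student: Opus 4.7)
The plan is to reduce problem \eqref{eq:U} to the homogeneous problem \eqref{eq:u} via the substitution $u:=U-W$ suggested in \eqref{eq:uw}, apply Theorem~\ref{teo higher regularity u} to the auxiliary function $u$, and then transfer the conclusions back to $U$. A direct computation shows that $u$ should solve $\ddot{u}-\Delta u=f$ in $\mc O$ with $f:=\Delta W-\ddot W$, with homogeneous Dirichlet condition on the whole of $\Gamma$ (indeed $u=U-W=0$ on $\Gamma^1$ by definition of $W$, while $u=0$ on $\Gamma^2$ since both $U$ and $W$ vanish there), and with initial data $u_0:=U_0-W(0,\cdot)$ and $u_1:=U_1-\dot W(0,\cdot)$.

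Next I would verify the hypotheses of Theorem~\ref{teo higher regularity u} for this auxiliary problem. The regularity $f\in H^1(\mc O)$ is a consequence of the four-fold intersection prescribed on $W$: both $\Delta W$ and $\ddot W$ belong to $L^2(0,T;H^1_{\rm loc}(\R^N))\cap H^1(0,T;L^2_{\rm loc}(\R^N))$, which embeds into $H^1(\mc O)$ since $\mc O$ is bounded. The condition $u_0\in H^2(\Omega_0)\cap H^1_0(\Omega_0)$ follows from the boundary vanishing on $\Lambda^1$ (where $U_0=W(0,\cdot)$) and on $\Lambda^2_0$ (where $U_0=W(0,\cdot)=0$). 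For the more delicate compatibility on $u_1$ I would invoke Remark~\ref{rmk:bdry} to reformulate the requirement as $u_1\in H^1(\Omega_0)$ together with $u_1+\omega(0,\cdot)\,\partial u_0/\partial\nu_{\Omega_0}=0$ on $\partial\Omega_0$: on $\Lambda^1$ this holds because $\omega(0,\cdot)\equiv 0$ (the fixed portion of the boundary has no normal velocity) and $u_1=U_1-\dot W(0,\cdot)=0$ by hypothesis, while on $\Lambda^2_0$ the vanishing of $W$ and $\dot W$ reduces the condition to the standing hypothesis $U_1+\omega(0,\cdot)\,\partial U_0/\partial\nu_{\Lambda^2_0}=0$. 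Theorem~\ref{teo higher regularity u} then delivers a unique weak solution $u$ with the stated $H^2$-regularity, and $U:=u+W$ inherits the same regularity from $u$ combined with the Sobolev embeddings in time on $W$ (which yield $W\in C^0([0,T];H^2_{\rm loc})$, $\dot W\in C^0([0,T];H^1_{\rm loc})$ and $\ddot W\in C^0([0,T];L^2_{\rm loc})$).

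To obtain the energy balance for $U$ I would repeat the formal derivation used in the proof of Theorem~\ref{teo energy balance} directly on $U$: multiply $\ddot U-\Delta U=0$ by $\dot U$, integrate by parts on $\mc O$ (now rigorously justified by the $H^2$-regularity of $U$ together with Corollary~\ref{cor:O}), and split the boundary integral as $\Gamma_t=\Gamma^1_t\cup\Gamma^2_t$. On $\Gamma^2_t$ the condition $U=0$ reproduces verbatim the manipulations of Theorem~\ref{teo higher regularity u} and produces the $\omega$-integral on $\Lambda^2_s$ involving $(\partial U/\partial\nu_{\Lambda^2_s})^2$. On $\Gamma^1_t$ the time-independence of $\Lambda^1$ forces $\nu^t_{\mc O}=0$ via \eqref{eq:nuO}, killing the terms proportional to $|\dot U|^2$ and $|\nabla U|^2$; using $\dot U=\dot W$ on $\Gamma^1$, the remaining cross term $\int_{\Gamma^1_t}(\nabla U\cdot\nu^x_{\mc O})\,\dot U\d\mc H^N$ becomes, via \eqref{eq:integralgamma}, the loading-work integral $\int_0^t\int_{\Lambda^1}\dot W\,\partial U/\partial\nu_{\Lambda^1}\d\mc H^{N-1}\d s$. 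The main obstacle I foresee is the bookkeeping in the compatibility step, specifically justifying $\omega(0,\cdot)\equiv 0$ on $\Lambda^1$ (which requires the implicit geometric compatibility $\Phi(t,\Lambda^1)=\Lambda^1$ between the diffeomorphism and the fixed part of the boundary, so that $\dot\Phi$ is tangent to $\Lambda^1$ there) and $\dot W(0,\cdot)\equiv 0$ on $\Lambda^2_0$ (which follows by time-tracing $\dot W$ on $\Gamma^2$, permissible thanks to $\dot W\in H^1(0,T;H^1_{\rm loc})\hookrightarrow C^0([0,T];H^1_{\rm loc})$). Once these are addressed, the remaining arguments are routine in light of the machinery developed in Sections~\ref{sec:fixeddomain}--\ref{sec:application}.
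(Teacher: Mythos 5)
Your reduction $u:=U-W$, the verification that $f=\Delta W-\ddot W\in H^1(\mc O)$ and that \eqref{eq:regindata} holds via Remark~\ref{rmk:bdry}, and the application of Theorem~\ref{teo higher regularity u} reproduce the paper's proof exactly; the only divergence is in the last step, where the paper obtains the energy balance by transferring \eqref{eq:enbaluimproved} from $u$ back to $U$ through integration by parts, while you rederive it directly on $U$ by splitting $\Gamma_t=\Gamma^1_t\cup\Gamma^2_t$ — both routes work and yield the same identity. One intermediate claim of yours is wrong as stated: $W=0$ on $\Gamma^2$ does \emph{not} imply $\dot W(0,\cdot)=0$ on $\Lambda^2_0$, because for $x\in\Lambda^2_0$ and small $t>0$ the point $x$ generally lies in the interior of $\Omega_t$, so $W(t,x)$ need not vanish and the partial time derivative at $t=0$ can be nonzero. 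What the vanishing of $W$ on the moving hypersurface $\Gamma^2$ actually gives is that the space-time gradient $(\dot W,\nabla W)$ is parallel to $\nu_{\mc O}$ there, i.e. $\dot W+\omega\,\partial W/\partial\nu_{\Lambda^2_t}=0$ on $\Gamma^2$; inserting this into $u_1+\omega(0,\cdot)\,\partial u_0/\partial\nu=U_1-\dot W(0,\cdot)+\omega(0,\cdot)\bigl(\partial U_0/\partial\nu-\partial W(0,\cdot)/\partial\nu\bigr)$ still cancels all $W$-contributions and reduces the compatibility condition to the standing hypothesis on $U_1$, so your conclusion survives with this corrected justification. Finally, the point you flag as the main obstacle — $\omega\equiv0$ on $\Lambda^1$ — requires no extra compatibility between $\Phi$ and $\Lambda^1$: by \eqref{eq:relomega} the scalar normal velocity is intrinsic to $\mc O$, and over the fixed part $\Lambda^1$ the lateral boundary of $\mc O$ is a vertical cylinder, so $\nu^t_{\mc O}=0$ there.
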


	\begin{rem}\label{rem:A5}
		Under the additional assumptions of Remark~\ref{rem:nonhomogeneous} and \ref{H2A}, in the energy balance above, besides the usual changes on the potential energy and the presence of the forcing term due to $\dot{A}$, 
the boundary terms related to $\Lambda^1$ and $\Lambda^2_t$ need to be replaced by
		\begin{equation*}
			\int_{0}^{t} \int_{\Lambda^1}\dot W(s,x)|\nu_{\Lambda^1}(x)|^2_{A(s,x)} \frac{\partial U}{\partial\nu_{\Lambda^1}}(s,x) \d\mc H^{N-1}(x)\d s,
		\end{equation*}
		and
		\begin{equation*}
			\int_{0}^{t}\int_{\Lambda_s^2}\frac{\omega(s,x)}{2}\left(|\nu_{\Lambda_s^2}(x)|^2_{A(s,x)}-\omega(s,x)^2\right)\left(\frac{\partial U}{\partial\nu_{\Lambda^2_s}}(s,x)\right)^2\d\mc H^{N-1}(x)\d s.
		\end{equation*}
	\end{rem}

	\begin{proof}[Proof of Theorem~\ref{teo:U}]
		 By considering the function $u$ defined in \eqref{eq:uw} it is easy to see that, from the point of view of weak solutions, problem \eqref{eq:U} is equivalent to problem \eqref{eq:u} with data
		 \begin{equation}\label{eq:dataw}
		 \begin{gathered}
		 	f(t,x)=\Delta W(t,x)-\ddot W(t,x),\\
		 	u_0(x)=U_0(x)-W(0,x),\quad\text{ and }\quad u_1(x)=U_1(x)-\dot W(0,x).
		 \end{gathered}
	 \end{equation}
	 	By the assumptions on $W, U_0, U_1$ we infer that $f\in H^1(\mc O)$ and that \eqref{eq:regindata} holds (see also Remark~\ref{rmk:bdry}), and so Theorem~\ref{teo higher regularity u} yields existence, uniqueness and regularity of the weak solution $U$.\par
		The energy balance instead follows from \eqref{eq:enbaluimproved} by exploiting \eqref{eq:uw} and \eqref{eq:dataw} and after some simple (but tedious) manipulation via integration by parts.
	\end{proof}
	
	\subsection{Examples of moving domains}
	We finally collect some examples of sets $\Omega_t$ satisfying \eqref{hypsetreg} (and sometimes also \eqref{hypsetmonotone}) for which we can explicitly construct diffeomorphisms $\Phi$ and $\Psi$ satisfying \eqref{assumptionsdiff} and the regularity conditions \ref{H1'} and \ref{H2}.
	\subsubsection{1-dimensional setting}\label{ex:1d}
	 Let $\ell\in C^{2,1}([0,T])$ satisfy
	\begin{equation*}
		\ell(t)>0,\quad\text{and}\quad |\dot{\ell}(t)|<1,\quad\text{for all }t\in [0,T],
	\end{equation*}
	and consider the sets $\Omega_t:=(0,\ell(t))$. By defining
	\begin{equation*}
		\Phi(t,y):=\frac{\ell(t)}{\ell(0)}y,\qquad\Psi(t,x):=\frac{\ell(0)}{\ell(t)}x,
	\end{equation*}
	it is elementary to verify the validity of all the assumptions of Theorem~\ref{teo higher regularity u} and also of \eqref{eq:fixedbdry}. If in addition $\ell$ is nondecreasing, then also \eqref{hypsetmonotone} is satisfied. In this situation there hold
	\begin{equation}\label{eq:omega1d}
		\omega(t,0)=0\quad\text{ and }\quad\omega(t,\ell(t))=\dot\ell(t),\quad\text{for every }t\in [0,T].
	\end{equation}
	This setting has been analysed in \cite{DMLazNar,LazNar, LazNarQuas, LazNarInitiation, Riv, RivQuas, RivNar}, where $\ell$ is just required to be Lipschitz. Their argument strongly relies on an explicit representation of solutions of the wave equation provided by d'Alembert's formula, which holds true only in dimension one.
	
	\subsubsection{Homothetic transformations}\label{ex:homo}
	Let $\Omega_0\subseteq\R^N$ satisfy \eqref{eq:Omega0}, \eqref{setregular} and let $\lambda\in C^{2,1}([0,T])$ satisfy $\lambda(0)=1$ and
	\begin{equation*}
		\lambda(t)>0,\quad\text{and}\quad |\dot{\lambda}(t)|\max\limits_{y\in {\overline{\Omega}_0}}|y|<1,\quad\text{for all }t\in [0,T].
	\end{equation*}
Consider the sets
\begin{equation*}
	\Omega_t:=\lambda(t)\Omega_0=\{x\in \R^N:\, x=\lambda(t)y\text{ for some }y\in \Omega_0\},
\end{equation*}
and the diffeomorphisms
\begin{equation*}
	\Phi(t,y):=\lambda(t)y,\qquad\text{and}\qquad \Psi(t,x):=\frac{x}{\lambda(t)}.
\end{equation*}
It is again immediate to check all the assumptions of Theorem~\ref{teo higher regularity u}. If moreover $\lambda$ is nondecreasing and $\Omega_0$ is positively balanced, meaning that
\begin{equation}\label{eq:balanced}
	\varepsilon \Omega_0\subseteq \Omega_0,\quad\text{for every }\varepsilon\in (0,1),
\end{equation}
then also \eqref{hypsetmonotone} is satisfied. Notice that \eqref{eq:balanced} is related to the position of $\Omega_0$ with respect to the origin. It is for instance fulfilled by star-shaped sets at the origin. \par
In this setting we have
\begin{equation*}
	\omega(t,x)=\dot\lambda(t)\frac{x}{\lambda(t)}\cdot\nu_{\Omega_0}\left(\frac{x}{\lambda(t)}\right), \quad\text{for all }t\in [0,T] \text{ and for }\mc H^{N-1}\text{-a.e. }x\in \partial\Omega_t=\lambda(t)\partial\Omega_0.
\end{equation*}
In the particular case $\Omega_0=B_1(0)$ we infer $\omega(t,x)=\dot{\lambda}(t)$ and we get a radial symmetry similar to the one studied in \cite{LazMolSol}. \par 
If instead $\Omega_0$ is a tetrahedron of the form
\begin{equation}\label{220620}
	\Omega_0=\{y\in \R^N:\, y\cdot n<1\text{ and }y_i>0\text{ for all }i=1,\dots,N\},
\end{equation}
where $n\in\R^N$ is a unit vector with positive components, then 
\begin{equation*}
	\omega(t,x)=\begin{cases}
		0,&\text{if }x_i=0\text{ for some }i=1,\dots,N,\\
		\dot{\lambda}(t), &\text{if }x\cdot n=\lambda(t),
	\end{cases}\qquad\text{for every }t\in [0,T].
\end{equation*}
We may also treat the case of an octahedron 
\begin{equation*}
	\Omega_0=\{y\in \R^N:\, y\cdot n_i<1,\text{ for }i=1,\dots,2^N\},
\end{equation*}
where $n_i\in \R^N$ is a unit vector belonging to the $i-$th orthant for all $i=1,\dots,2^N$; in this situation there holds $\omega(t,x)=\dot\lambda(t)$.

\subsubsection{Sublevel sets}\label{ex:sublevels}
Let $g\in C^0(\R^N)\cap C^{3,1}(\R^N\setminus\{g=0\})$ be a nonnegative function satisfying
\begin{equation*}
	\nabla g(x)\neq 0,\quad\text{ for every }x\in \R^N\setminus\{g=0\},
\end{equation*}
and assume there exists $R>0$ such that the sublevel set $\{g<R\}$ is bounded; then consider a nondecreasing function $\rho\in C^{2,1}([0,T])$ such that
\begin{equation*}
	0<\rho(0)\le\rho(T)<R.
\end{equation*}
 We now define the sets
\begin{equation*}
	\Omega_t:=\{R-\rho(t)<g<R\},
\end{equation*}
which fulfill assumptions \eqref{hypset}, \eqref{setregular} and also \eqref{eq:fixedbdry}. An analogous choice, with the obvious changes, is $\Omega_t=\{r<g<\rho(t)\}$ with $r\in (0,\rho(0))$.\par 
We thus need to build the diffeomorphisms $\Phi$, $\Psi$ satisfying \eqref{assumptionsdiff}, \ref{H1'} and \ref{H2}. To this aim let us consider the vector field $X\colon [0,T]\times (\R^N\setminus\{g=0\})\to \R^N$ defined by
\begin{equation*}
	X(t,x):=\frac{\dot\rho(t)}{\rho(t)}(g(x)-R)\frac{\nabla g(x)}{|\nabla g(x)|^2},
\end{equation*}
and notice that $X$ is of class $C^{1,1}([0,T];C^{2,1}(\R^N\setminus\{g<\varepsilon\}))$ for all $\varepsilon>0$. The map $\Phi$ is now defined as follows: $\Phi(t,y)$ is the evolution at time $t$ of the point $y\in {\overline{\Omega}_0}$ through the flow
\begin{equation*}
	\begin{cases}
		\dot{\Phi}(s,y)=X(s,\Phi(s,y)),&s\in (0,T),\\
		\Phi(0,y)=y.
	\end{cases}
\end{equation*} 
By standard results on ODEs, for all $y\in {\overline{\Omega}_0}$ there exists a time $T_y\in (0,T]$ such that there exists a unique local solution $\Phi(\cdot, y)\in C^{2,1}([0,T_y])$ of the Cauchy problem above. We now show that actually the solution is global (i.e. $T_y=T$) by proving that $\Phi(t,\Omega_0)=\Omega_t$ for all $t\in [0,T]$. Indeed this ensures that the flow is always well contained in the region where $g$ does not vanish, and hence where the vector field $X$ does not blow up.\par 
To this aim we notice that for all $t\in [0,T_y]$ we have
\begin{align*}
	g(\Phi(t,y))=g(y)+\int_{0}^{t}\nabla g(\Phi(s,y))\cdot\dot{\Phi}(s,y)\d s=g(y)+\int_{0}^{t}\frac{\dot\rho(s)}{\rho(s)}(g(\Phi(s,y))-R)\d s.
\end{align*}
By setting $F_y(t):=g(\Phi(t,y))-R$ and by differentiating the above equality we get
\begin{equation}\label{eq:system}
	\begin{cases}\displaystyle
		\dot F_y(t)=\frac{\dot\rho(t)}{\rho(t)}F_y(t),& t\in [0,T_y],\\
		F_y(0)=g(y)-R.
	\end{cases}
\end{equation}
The only solution to \eqref{eq:system} is given by $F_y(t)=\frac{\rho(t)}{\rho(0)}F_y(0)$, which finally implies
\begin{equation}\label{eq:levels}
	g(\Phi(t,y))-R=\frac{\rho(t)}{\rho(0)}(g(y)-R),\quad\text{for every }t\in [0,T_y].
\end{equation}
This implies that $\Phi(t,\cdot)$ maps level sets of $g$ in level sets of $g$ and hence that $\Phi(t,\Omega_0)= \Omega_t$; recalling that $R>\rho(T)$, we now infer that $\Phi$ is well-defined on the whole $[0,T]\times {\overline{\Omega}_0}$ and it belongs to $C^{2,1}([0,T];C^{2,1}({\overline{\Omega}_0}))$.\par 
We can now introduce the function $\Psi\colon \overline{\mc O}\to [0,T]\times {\overline{\Omega}_0}$ as the \lq\lq space\rq\rq-inverse of $\Phi$, namely $\Psi(t,x):=[\Phi(t,\cdot)]^{-1}(x)$, so that \eqref{assumptionsdiff} is fulfilled by construction. By the Inverse Function Theorem and the Implicit Function Theorem, from the regularity of $\Phi$ we also deduce \ref{H1'}.\par 
To finally have \ref{H2} we need to require in addition that
\begin{equation}\label{eq:addi}
	\dot{\rho}(t) < \min\limits_{x\in {\overline{\Omega}_t}}|\nabla g(x)|,\quad\text{ for every }t\in[0,T] ;
\end{equation}
indeed, by also exploiting \eqref{eq:levels}, it yields
\begin{equation*}
	|\dot{\Phi}(t,y)|=\frac{\dot{\rho}(t)}{\rho(t)}\frac{(R-g(\Phi(t,y)))}{|\nabla g(\Phi(t,y))|}=\frac{\dot{\rho}(t)}{|\nabla g(\Phi(t,y))|}\frac{R-g(y)}{\rho(0)}\le \frac{\dot{\rho}(t)}{|\nabla g(\Phi(t,y))|}<1.
\end{equation*}
The scalar normal velocity can be finally computed as follows:
\begin{equation*}
	\omega(t,x)=X(t,x)\cdot \nu_{\Omega_t}(x)=\begin{cases}\displaystyle
		\frac{\dot{\rho}(t)}{|\nabla g(x)|},&\text{if }g(x)=R-\rho(t),\\
			0,&\text{if }g(x)=R,
	\end{cases}\qquad\text{for every }t\in [0,T].
\end{equation*}

With the particular choice $g(x)=|x|$ the sets $\Omega_t$ are annuli, so we recover the radial case already mentioned above. Since now $|\nabla g(x)|=1$, condition \eqref{eq:addi} reads $\dot{\rho}(t)<1$ and the scalar normal velocity is
\begin{equation}\label{eq:omegarad}
\omega(t,x)=\begin{cases}\displaystyle
	\dot{\rho}(t),&\text{if }|x|=R-\rho(t),\\
	0,&\text{if }|x|=R,
\end{cases}\qquad\text{for every }t\in [0,T].
\end{equation}
The radial case was analysed in \cite{LazMolSol} with slightly lower regularity assumptions, by reducing the problem to a one-dimensional one, as in example \ref{ex:1d}.

\bigskip

The situations described in examples \ref{ex:homo} and \ref{ex:sublevels} above refer to debonding models where one assumes to know a priori the possible shapes of the debonding front and of the debonded set. The unknown to be determined is the evolution law providing the time when a certain debonded front is reached. This is analogous to models with prescribed crack in fracture mechanics.

A simple concrete application is the following: 
in dimension two, the possible debonded regions are triangles
\[ 
	\Omega_t=\{x\in \R^2:\, x\cdot n<\lambda(t)\text{ and }x_i>0\text{ for }i=1,2\},
\]
with $n\in\R^N$ a unit vector with positive components and
$\lambda\in C^{2,1}([0,T];[1,+\infty))$ nondecreasing; cf.\ \eqref{220620}.
Boundary conditions are given e.g.\ on the segments 
$\{x=(x_1,0):\, 0<x\cdot n<\lambda(0)/2\}$
and
$\{x=(0,x_2):\, 0<x\cdot n<\lambda(0)/2\}$.
(This represents an external load pulling the film from a region close to the vertex $(0,0)$.)  
Such example can be treated with both methods shown in examples \ref{ex:homo} and \ref{ex:sublevels}.
As in the one-dimensional case of example \ref{ex:1d}, the possible debonding fronts are parallel lines (modelling a material that can be detached only in a certain direction). However, this example is genuinely two-dimensional, since the debonding front is not parallel to the edges where the boundary condition is imposed, which results in nontrivial reflections in the propagation of waves in the debonded region.

	\section{Application to dynamic debonding}\label{sec:debonding}
	In this last section we propose a proper formulation of a dynamic debonding model. Differently from the previous sections, in this setting also the evolution of the sets $t\mapsto\Omega_t$ is unknown, and it has to be recovered by means of energetic considerations which involve the solution $u$ of problem \eqref{eq:u} in an implicit and complex way. Here we rigorously define the dynamic energy release rate in a general framework, i.e. without any ansatz on the shape of the domains. This allows to state the energetic principle governing the evolution, called (dynamic) Griffith criterion. \par 
We assume that the energy needed to debond a portion of film parametrized on a (measurable) set $E\subseteq\R^N$ is given by
	\begin{equation}\label{eq:energydissipated}
		\int_{E}\kappa(x)\d x,
	\end{equation}
	where $\kappa\in C^0(\R^N)$ is a positive function, representing the \emph{toughness} of the glue between the film and the substrate.\par
	The next lemma shows how the integral in \eqref{eq:energydissipated} varies when the domain of integration depends on time and its evolution is given.
	\begin{lemma}
		Assume \eqref{hypsetreg} and let $\Phi,\Psi$ be as in \eqref{assumptionsdiff} and satisfy \ref{H1}. Given any function $\kappa\in C^0(\R^N)$, for every $t\in [0,T]$ there holds
		\begin{subequations}
		\begin{equation}\label{eq:kappa}
			\int_{\Omega_t}\kappa(x)\d x=\int_{\Omega_0}\kappa(x)\d x+\int_{0}^{t}\int_{\partial\Omega_s}\omega(s,x)\kappa(x)\d\mc H^{N-1}(x)\d s.
		\end{equation}
		In particular we have
		\begin{equation}\label{eq:measure}
			|\Omega_t|=|\Omega_0|+\int_{0}^{t}\int_{\partial\Omega_s}\omega(s,x)\d\mc H^{N-1}(x)\d s.
		\end{equation}
\end{subequations}
	\end{lemma}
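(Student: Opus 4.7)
The plan is to reduce both identities to a Reynolds-type transport formula for the moving domain $\Omega_t = \Phi(t,\Omega_0)$, with boundary velocity given by the vector field $V(t,x):=\dot\Phi(t,\Psi(t,x))$. Since $V(t,x)\cdot\nu_{\Omega_t}(x)$ coincides with the scalar normal velocity $\omega(t,x)$ by the very definition \eqref{eq:omega}, \eqref{eq:kappa} will follow from
\begin{equation*}
\frac{d}{dt}\int_{\Omega_t}\kappa(x)\d x=\int_{\partial\Omega_t}\kappa(x)\,\omega(t,x)\d\mc H^{N-1}(x),
\end{equation*}
by integration in time, and \eqref{eq:measure} is the special case $\kappa\equiv 1$.

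First I would apply the change of variables $x=\Phi(t,y)$ (with Jacobian $J(t,y):=\det D\Phi(t,y)>0$ by \eqref{eq:detpos}) to rewrite
\begin{equation*}
\int_{\Omega_t}\kappa(x)\d x=\int_{\Omega_0}\kappa(\Phi(t,y))\,J(t,y)\d y.
\end{equation*}
Assuming momentarily that $\kappa\in C^1(\R^N)$, the right-hand side is differentiable in $t$ by \ref{H1} and dominated convergence, and
\begin{equation*}
\frac{d}{dt}\int_{\Omega_0}\kappa(\Phi(t,y))\,J(t,y)\d y
=\int_{\Omega_0}\bigl[\nabla\kappa(\Phi(t,y))\cdot\dot\Phi(t,y)\,J(t,y)+\kappa(\Phi(t,y))\,\partial_t J(t,y)\bigr]\d y.
\end{equation*}
Next I would apply Jacobi's formula (as already used in the proof of \eqref{eq: dot detDPsi + div = 0}):
$\partial_t J(t,y)=J(t,y)\,\tr\!\bigl[D\Psi(t,\Phi(t,y))\,D\dot\Phi(t,y)\bigr]=J(t,y)\,(\div_x V(t,\cdot))(\Phi(t,y))$, where the last equality follows from the cyclic property of the trace together with the chain rule applied to $V=\dot\Phi\circ\Psi$. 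Together with the chain-rule identity $\nabla\kappa(\Phi(t,y))\cdot\dot\Phi(t,y)=(\nabla\kappa\cdot V)(t,\Phi(t,y))$, this shows that the integrand is exactly the pull-back through $\Phi$ of $\div_x(\kappa\,V)(t,x)\,J(t,y)$.

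Changing variables back via $y=\Psi(t,x)$, using \eqref{eq: detDPsi detDPhi=1}, I obtain
\begin{equation*}
\frac{d}{dt}\int_{\Omega_t}\kappa(x)\d x=\int_{\Omega_t}\div_x\bigl(\kappa(x)\,V(t,x)\bigr)\d x.
\end{equation*}
Since $V(t,\cdot)\in C^{0,1}(\overline{\Omega_t};\R^N)$ by \ref{H1} and $\Omega_t$ is Lipschitz by \eqref{hypsetreg}, the divergence theorem yields
\begin{equation*}
\frac{d}{dt}\int_{\Omega_t}\kappa(x)\d x=\int_{\partial\Omega_t}\kappa(x)\,V(t,x)\cdot\nu_{\Omega_t}(x)\d\mc H^{N-1}(x)=\int_{\partial\Omega_t}\kappa(x)\,\omega(t,x)\d\mc H^{N-1}(x),
\end{equation*}
by \eqref{eq:omega}. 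Integrating from $0$ to $t$ gives \eqref{eq:kappa} for $\kappa\in C^1$.

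Finally, to remove the extra smoothness on $\kappa$, I would approximate $\kappa\in C^0(\R^N)$ uniformly on the compact set $K:=\overline{\mc O}$ by a sequence $\kappa_n\in C^1(\R^N)$ (obtained e.g.\ by mollification). Since the volume integrals $\int_{\Omega_t}\kappa_n\d x\to\int_{\Omega_t}\kappa\d x$ uniformly in $t\in[0,T]$, and the boundary integrals $\int_{\partial\Omega_s}\omega\,\kappa_n\d\mc H^{N-1}\to\int_{\partial\Omega_s}\omega\,\kappa\d\mc H^{N-1}$ uniformly in $s$ (using that $\mc H^{N-1}(\partial\Omega_s)$ and $\|\omega\|_{L^\infty}$ are bounded uniformly in $s$, thanks to \ref{H1}), passing to the limit $n\to\infty$ in \eqref{eq:kappa} for $\kappa_n$ yields the identity for $\kappa$. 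The main technical point of the argument is really just the application of Jacobi's formula that converts the time derivative of $J$ into a spatial divergence of $V$; once this is observed, everything else is a routine change of variables and a standard approximation.
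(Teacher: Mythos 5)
Your argument is correct, but it follows a different route from the paper. The paper's proof is a single application of the Gauss--Green theorem in $\R^{N+1}$: since $\partial_s\kappa(x)=0$, integrating over the space-time region $\mc O_t$ and using \eqref{bdryO} splits the boundary contribution into the slices $\{t\}\times\Omega_t$, $\{0\}\times\Omega_0$ and the lateral part $\Gamma_t$, after which the already-established identity \eqref{eq:integralgamma} (together with $\nu^t_{\mc O}=-\omega|\nu^x_{\mc O}|$) converts the lateral term directly into the iterated integral of $\omega\kappa$. You instead reprove the Reynolds transport identity from scratch: pull back to $\Omega_0$, differentiate the Jacobian via Jacobi's formula, recognize the integrand as the pull-back of $\div_x(\kappa V)$, and apply the spatial divergence theorem at each time. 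Both are sound; the paper's version is shorter because it reuses Corollary~\ref{cor:O}, while yours is more self-contained and has the merit of making explicit the $C^0\to C^1$ mollification of $\kappa$ that the paper glosses over (its ``fundamental theorem of calculus in $\R^{N+1}$'' applied to a merely continuous $\kappa$ needs the same smoothing). One point you should flag but which is not a gap: under \ref{H1} the quantities $\partial_t J$ and $D\dot\Phi$ exist only almost everywhere, so your pointwise identity $\frac{d}{dt}\int_{\Omega_t}\kappa=\int_{\partial\Omega_t}\kappa\,\omega$ holds for a.e.\ $t$ only; since $t\mapsto\int_{\Omega_t}\kappa$ is Lipschitz, the time integration step still goes through and yields \eqref{eq:kappa} for every $t$.
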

	\begin{proof}
		We recall that by Corollary~\ref{cor:O} the set $\mc O$ is open, with Lipschitz boundary, and satisfies \eqref{bdryO}. Setting $\mc O_t:=\{(s,x)\in \mc O:\, s\in (0,t)\}$, by an application of the fundamental theorem of calculus in $\R^{N+1}$ we thus deduce that
		\begin{align*}
			0&=\int_{\mc O_t}\frac{\partial}{\partial s}\kappa(x)\d s\d x=\int_{\partial O_t}\kappa\,\nu_{\mc O}^t\d \mc H^N\\
			&=\int_{\Omega_t}\kappa(x)\d x-\int_{\Omega_0}\kappa(x)\d x+\int_{\Gamma_t}\kappa\,\nu_{\mc O}^t\d \mc H^N.
		\end{align*}
		We now conclude by using \eqref{eq:integralgamma}.
	\end{proof}
	
	\subsection{Dynamic energy release rate, maximum dissipation principle and Griffith criterion}
Let now $u$ be the weak solution found in Theorem~\ref{teo higher regularity u} for a given nondecreasing family $\Omega_t$ (i.e. also satisfying \eqref{hypsetmonotone}). 
The \emph{dynamic energy release rate} \cite{Fre90} is the opposite of the (infinitesimal) energy variation due to the change in time of the domain, without accounting for the energy variation due to the evolution of the external forces.
Recalling the energy balance \eqref{eq:enbaluimproved}, we thus consider the internal energy (kinetic and potential) subtracted with the work of external forces:
	\begin{equation}\label{eq:E}
		\mc E(t):= \frac 12\|\dot u(t)\|^2_{L^2(\Omega_t)}+\frac 12\|\nabla u(t)\|^2_{L^2(\Omega_t)}-\int_{0}^{t}\langle f(s), \dot{u}(s)\rangle_{L^2(\Omega_s)} \d s.
	\end{equation}
We now provide the definition of dynamic energy release rate.
	\begin{defin}
		For $t\in [0,T]$, we define the \emph{dynamic energy release rate} of the debonding model as
		\begin{equation*}
			\mc G(t):=\lim\limits_{h \to 0^+}-\frac{\mc E(t+h)-\mc E(t)}{|\Omega_{t+h}\setminus\Omega_t|},
		\end{equation*}
		whenever such limit exists.
	\end{defin}
	Due to the energy balance \eqref{eq:enbaluimproved} together with \eqref{eq:measure}, by means of \eqref{eq:E} we thus infer that the dynamic energy release rate can be computed as follows:
	\begin{equation*}
		\mc G(t)=-\frac{\dot{\mc E}(t)}{\frac{\d}{\d t}|\Omega_{(\cdot)}|(t)}=\frac{\displaystyle \int_{\partial\Omega_t}\!\!\! \frac{\omega(t,x)}{2}(1-\omega(t,x)^2)\left(\frac{\partial u}{\partial\nu_{\Omega_t}}(t,x)\right)^2\!\!\!\!\d\mc H^{N-1}(x)}{\displaystyle \int_{\partial\Omega_t} \!\!\!\omega(t,x)\d\mc H^{N-1}(x)},\text{ if }\int_{\partial\Omega_t} \!\!\!\omega(t,x)\d\mc H^{N-1}(x)>0.
	\end{equation*}
	In the one-dimensional setting and in the radial case (see Sections~\ref{ex:1d} and~\ref{ex:sublevels}), analyzed in \cite{DMLazNar, RivNar} and \cite{LazMolSol} respectively, the integrands in the latter formula actually do not depend on $x$ due to the symmetry of the problem; in other words, in those cases the released energy is the same at each point of the boundary $\partial\Omega_t$. In contrast, in the general situation here depicted the released energy may be different from point to point. It is thus convenient to introduce the \emph{density} of the dynamic energy release rate, which is obtained by localizing the above formula around a point $x\in \partial\Omega_t$ as in the following definition.
\begin{defin}
		Given $t\in [0,T]$ and $x\in \partial\Omega_t$ for which $\alpha:=\omega(t,x)>0$, the \emph{dynamic energy release rate density} at the point $(t,x)$ with speed $\alpha\in (0,1)$ is defined by
		\begin{equation}\label{eq:dynenrel}
			G_\alpha(t,x):=\lim\limits_{r\to 0^+}\frac{\displaystyle \int_{\partial\Omega_t\cap B_r(x)} \frac{\omega(t)}{2}(1-\omega(t)^2)\left(\frac{\partial u}{\partial\nu_{\Omega_t}}(t)\right)^2\d\mc H^{N-1}}{\displaystyle \int_{\partial\Omega_t\cap B_r(x)} \omega(t)\d\mc H^{N-1}}=\frac 12 (1-\alpha^2)\left(\frac{\partial u}{\partial\nu_{\Omega_t}}(t,x)\right)^2.
		\end{equation}
	If $\alpha=0$, the dynamic energy release rate density is extended by continuity, setting:
	\begin{equation*}
		G_0(t,x):=\frac 12 \left(\frac{\partial u}{\partial\nu_{\Omega_t}}(t,x)\right)^2.
	\end{equation*}
	\end{defin}
\begin{rem} We now provide the explicit expression of the dynamic energy release rate in the one-dimensional setting and in the radial one, recalling that in those cases the dynamic energy release rate coincides with its density and recovering the formulas obtained in \cite{DMLazNar, RivNar, LazMolSol}. In the 1-d situation, by means of \eqref{eq:omega1d}, it is easy to check that
	\begin{equation*}
		\mc G(t)=G_{\dot{\ell}(t)}(t,\ell(t))=\frac 12 (1-\dot{\ell}(t)^2)u_x(t,\ell(t))^2,\quad \text{for a.e. }t\in [0,T].
	\end{equation*}
	In the radial case the weak solution $u(t,\cdot)$ has radial symmetry \cite{LazMolSol}, hence the dynamic energy release rate density $G_\alpha(t,\cdot)$ is radial as well. Thus, by considering $u^{\rm rad}(t,r):=u(t,x)$ and $G^{\rm rad}_\alpha(t,r):=G_\alpha(t,x)$ for $r=R-|x|$ and by using \eqref{eq:omegarad}, we obtain
	\begin{equation*}
		\mc G(t)=G^{\rm rad}_{\dot{\rho}(t)}(t,\rho(t))=\frac 12 (1-\dot{\rho}(t)^2)u^{\rm rad}_r(t,\rho(t))^2,\quad \text{for a.e. }t\in [0,T].
	\end{equation*}
\end{rem}
\begin{rem}
	We notice that the dynamic energy release rate density can be written in an equivalent way by using the relation
	\begin{equation*}
		\dot{u}(t,x)+\omega(t,x)\frac{\partial u}{\partial\nu_{\Omega_t}}(t,x)=0, \quad\text{ for a.e. $t\in [0,T]$ and $\mc H^{N-1}$-a.e. }x\in \partial\Omega_t,
	\end{equation*}
which follows since $u\equiv 0$ on $\Gamma$. Indeed, from the above equality we deduce
\begin{equation}\label{eq:equivDEER}
	\begin{aligned}
	G_{\omega(t,x)}(t,x)&=\frac 12 (1-\omega(t,x)^2)\left(\frac{\partial u}{\partial\nu_{\Omega_t}}(t,x)\right)^2=\frac 12\frac{1-\omega(t,x)}{1+\omega(t,x)}\left[(1+\omega(t,x))\frac{\partial u}{\partial\nu_{\Omega_t}}(t,x)\right]^2\\
	&=\frac 12\frac{1-\omega(t,x)}{1+\omega(t,x)}\left[\frac{\partial u}{\partial\nu_{\Omega_t}}(t,x)-\dot{u}(t,x)\right]^2.
\end{aligned}
\end{equation}
This will be used in Proposition~\ref{prop:Griff}.
\end{rem}
	Given a positive toughness $\kappa\in C^0(\R^N)$, we now postulate that during the evolution process the following energy balance is satisfied:
	\begin{equation}\label{eq:enbalkappa}		
			\mc E(t)+\int_{\Omega_t\setminus\Omega_0}\kappa(x)\d x= \mc E(0),\qquad\text{ for every }t\in [0,T].		
	\end{equation}
	By comparing \eqref{eq:enbalkappa}, \eqref{eq:enbaluimproved}, \eqref{eq:kappa} and \eqref{eq:dynenrel} we observe that the energy is conserved if one requires
	\begin{equation*}
		\omega(t,x)\kappa(x)=\omega(t,x)G_{\omega(t,x)}(t,x),\quad\text{for a.e. $t\in [0,T]$ and for $\mc H^{N-1}$-a.e. $x\in \partial\Omega_t$}.
	\end{equation*}
	However, the above condition is not sufficient to determine a proper evolution of the sets $\Omega_t$, indeed $\omega\equiv 0$ (i.e. $\Omega_t\equiv\Omega_0$) is always an admissible choice.\par 
A stronger requirement is the following \emph{local maximum dissipation principle}, which essentially says that $\Omega_t$ grows whenever it is possible, while preserving the energy balance:
	\begin{equation}\label{eq:mdp}
		\begin{gathered}
				\omega(t,x)=\max\{\alpha\in[0,1):\, \alpha\,\kappa(x)=\alpha G_\alpha(t,x)\},\\
				\text{for a.e. $t\in [0,T]$ and for $\mc H^{N-1}$-a.e. $x\in \partial\Omega_t$}.
		\end{gathered}
	\end{equation}
We refer to \cite{DMLazNar, LazMolSol, RivNar} for a discussion.
The next proposition states two equivalent forms of the local maximum dissipation principle:
the first one is the (local) dynamic Griffith criterion, the second one consists of two equivalent equations for the scalar normal velocity, involving the normal derivative of the displacement $u$. Note that the condition $\omega<1$ in \eqref{eq:Griffith} corresponds to the physical requirement that the speed of growth of the domain is subsonic.
	\begin{prop}\label{prop:Griff}
		Let $\kappa\in C^0(\R^N)$ be positive. Assume \eqref{hypsetmonotone} and the hypotheses of Theorem~\ref{teo higher regularity u}, and let $u$ be the unique weak solution of problem \eqref{eq:u}. Then, the following three conditions are equivalent:
		\begin{itemize}
\item the local maximum dissipation principle \eqref{eq:mdp} holds true;
			\item the \emph{local dynamic Griffith criterion} holds true, namely
			\begin{equation}\label{eq:Griffith}
				\begin{cases}
					0\le\omega(t,x)<1,\\
					G_{\omega(t,x)}(t,x)\le \kappa(x),\\
					\omega(t,x)\left[G_{\omega(t,x)}(t,x)- \kappa(x)\right]=0,
				\end{cases}
			\text{for a.e. $t\in [0,T]$ and for $\mc H^{N-1}$-a.e. $x\in \partial\Omega_t$};
			\end{equation}
			\item for a.e. $t\in [0,T]$ and for $\mc H^{N-1}$-a.e. $x\in \partial\Omega_t$ the scalar normal velocity $\omega$ is given by
			\begin{subequations}\label{eq:omexplicit}
			\begin{equation}\label{eq:omexplicita}
				\omega(t,x)=\begin{cases}
					\displaystyle\sqrt{1-\frac{2\kappa(x)}{\left(\frac{\partial u}{\partial\nu_{\Omega_t}}(t,x)\right)^2}},&\text{if }\left(\frac{\partial u}{\partial\nu_{\Omega_t}}(t,x)\right)^2> 2\kappa(x),\\
					0, &\text{otherwise},
				\end{cases}
	\end{equation}
or equivalently
	\begin{equation}
			\label{eq:omexplicitb} \omega(t,x)=\max\left\{\frac{\left[\frac{\partial u}{\partial\nu_{\Omega_t}}(t,x)-\dot{u}(t,x)\right]^2-2\kappa(x)}{\left[\frac{\partial u}{\partial\nu_{\Omega_t}}(t,x)-\dot{u}(t,x)\right]^2+2\kappa(x)},\ 0\right\}.
			\end{equation}		
	\end{subequations}
		\end{itemize}
	\end{prop}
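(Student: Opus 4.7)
The statement is pointwise in $(t,x)$, so the plan is to fix a.e.\ $t\in [0,T]$ and $\mc H^{N-1}$-a.e.\ $x\in \partial\Omega_t$, set $\alpha:=\omega(t,x)\in [0,1)$, $\beta:=\tfrac{\partial u}{\partial \nu_{\Omega_t}}(t,x)$ and $k:=\kappa(x)>0$, and check the three conditions by elementary algebra. By the very definition \eqref{eq:dynenrel} we have $G_\alpha(t,x)=\tfrac12(1-\alpha^2)\beta^2$, so the function $\alpha\mapsto \alpha\,G_\alpha(t,x)$ is continuous and strictly concave on $[0,1)$, vanishing at $\alpha=0$ and at $\alpha=1$.

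First I would prove \eqref{eq:mdp}$\Leftrightarrow$\eqref{eq:omexplicita}. The equation $\alpha k=\alpha\,G_\alpha(t,x)$ rewrites as $\alpha\bigl[k-\tfrac12(1-\alpha^2)\beta^2\bigr]=0$, so its solutions in $[0,1)$ are $\alpha=0$ together with (provided $\beta^2>2k$) the unique positive root $\alpha_*:=\sqrt{1-2k/\beta^2}$. Hence the maximum in \eqref{eq:mdp} is $\alpha_*$ when $\beta^2>2k$, and $0$ otherwise, which is exactly \eqref{eq:omexplicita}.

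Next I would prove \eqref{eq:Griffith}$\Leftrightarrow$\eqref{eq:omexplicita} by discussing the two cases in the complementarity relation $\omega\bigl[G_\omega-\kappa\bigr]=0$. If $\alpha>0$, then the third line of \eqref{eq:Griffith} forces $\tfrac12(1-\alpha^2)\beta^2=k$, which together with $\alpha\in [0,1)$ gives $\beta^2>2k$ and $\alpha=\alpha_*$; conversely $\alpha_*$ clearly satisfies all three conditions. If $\alpha=0$, the inequality $G_0\le\kappa$ reads $\tfrac12\beta^2\le k$, i.e.\ $\beta^2\le 2k$; conversely, if $\beta^2\le 2k$, then $\alpha=0$ satisfies \eqref{eq:Griffith} trivially. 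This exhausts the cases and matches \eqref{eq:omexplicita}.

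Finally I would prove that \eqref{eq:omexplicita} is equivalent to \eqref{eq:omexplicitb}. Since $u\equiv 0$ on $\Gamma$, differentiating along the boundary gives the boundary identity $\dot u+\omega\,\tfrac{\partial u}{\partial\nu_{\Omega_t}}=0$, so $\beta-\dot u=(1+\alpha)\beta$ and therefore $(\beta-\dot u)^2=(1+\alpha)^2\beta^2$. In the nontrivial case $\beta^2>2k$ we substitute $\beta^2=2k/(1-\alpha^2)$ obtained from \eqref{eq:omexplicita} and compute
\[
\frac{(\beta-\dot u)^2-2k}{(\beta-\dot u)^2+2k}
=\frac{2k\tfrac{1+\alpha}{1-\alpha}-2k}{2k\tfrac{1+\alpha}{1-\alpha}+2k}
=\frac{(1+\alpha)-(1-\alpha)}{(1+\alpha)+(1-\alpha)}=\alpha,
\]
while in the degenerate case $\beta^2\le 2k$ we have $\alpha=0$, hence $\dot u=0$ and $(\beta-\dot u)^2-2k=\beta^2-2k\le 0$, so the maximum in \eqref{eq:omexplicitb} is indeed $0$. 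Thus \eqref{eq:omexplicitb} reproduces \eqref{eq:omexplicita}.

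There is no serious analytic obstacle: the regularity set up in Theorem~\ref{teo higher regularity u} is what makes the pointwise normal trace $\tfrac{\partial u}{\partial\nu_{\Omega_t}}$ and the identity $\dot u+\omega\tfrac{\partial u}{\partial\nu_{\Omega_t}}=0$ meaningful $\mc H^{N-1}$-a.e.\ on $\Gamma$ (cf.\ \eqref{eq:equivDEER} and Remark~\ref{rmk:bdry}); once these pointwise expressions are justified, the proposition reduces to the elementary algebraic steps above.
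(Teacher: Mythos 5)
Your proof is correct and follows essentially the same route as the paper's: both reduce the statement to pointwise algebra on $\partial\Omega_t$, characterize the solution set of $\alpha\,\kappa(x)=\alpha\,G_\alpha(t,x)$ as $\{0\}$ or $\{0,\alpha_*\}$ with $\alpha_*=\sqrt{1-2\kappa(x)/(\partial u/\partial\nu_{\Omega_t})^2}$ using the explicit formula \eqref{eq:dynenrel}, and pass between the two forms in \eqref{eq:omexplicit} via the boundary identity $\dot u+\omega\,\partial u/\partial\nu_{\Omega_t}=0$, i.e.\ the paper's \eqref{eq:equivDEER}. The only very minor point to tighten is that for \eqref{eq:omexplicitb} you spell out only the implication from \eqref{eq:omexplicita}; the converse follows by reversing the same algebra (if the maximum in \eqref{eq:omexplicitb} is positive, solving $\omega=\frac{s-2\kappa(x)}{s+2\kappa(x)}$ with $s=(1+\omega)^2\left(\partial u/\partial\nu_{\Omega_t}\right)^2$ gives $(1-\omega^2)\left(\partial u/\partial\nu_{\Omega_t}\right)^2=2\kappa(x)$), and the paper is equally terse on this step.
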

\begin{proof}
	Let us first fix a pair $(t,x)$. We then observe that the set 
	\begin{equation*}
		A(t,x):= \{\alpha\in[0,1):\, \alpha\kappa(x)=\alpha G_\alpha(t,x)\},
	\end{equation*}
	appearing in \eqref{eq:mdp} consists of at most two elements; indeed by recalling \eqref{eq:dynenrel} and since $\kappa(x)>0$ it is easy to check that
	\begin{alignat*}{3}
		A(t,x)&=
			\left\{0, \sqrt{1-\frac{2\kappa(x)}{\left(\frac{\partial u}{\partial\nu_{\Omega_t}}(t,x)\right)^2}}\right\}, \quad && \text{if }\left(\frac{\partial u}{\partial\nu_{\Omega_t}}(t,x)\right)^2> 2\kappa(x),\\
		A(t,x)&=	\{0\}, &&\text{otherwise}.
	\end{alignat*}
	Hence the equivalence between \eqref{eq:mdp} and \eqref{eq:omexplicita} is proved.  Analogously, \eqref{eq:mdp} and \eqref{eq:omexplicitb} turns out to be equivalent by employing \eqref{eq:equivDEER}. The equivalence between \eqref{eq:omexplicita} and \eqref{eq:Griffith} is straightforward by exploiting the explicit form \eqref{eq:dynenrel}, so we conclude.
\end{proof}

	\begin{rem}\label{rem:A6} 
		We conclude this section by listing  the changes that must be taken into account in the case of the hyperbolic equation \eqref{eq:uA}. 		The energy $\mc E$ in \eqref{eq:E} now takes the form
		\begin{equation*}
			\begin{aligned}
				\mc E(t)=&\quad\, \frac 12\|\dot u(t)\|^2_{L^2(\Omega_t)}+\frac 12\langle A(t)\nabla u(t), \nabla u(t)\rangle_{L^2(\Omega_t)}-\int_{0}^{t}\langle f(s), \dot{u}(s)\rangle_{L^2(\Omega_s)} \d s\\
				&-\frac 12\int_{0}^{t}\langle\dot{A}(s)\nabla u(s),\nabla u(s)\rangle_{L^2(\Omega_s)}\d s.
			\end{aligned}
		\end{equation*}
	As a consequence the dynamic energy release rate density becomes
		\begin{equation*}
		G_\alpha(t,x)=\frac 12 (|\nu_{\Omega_t}(x)|^2_{A(t,x)}-\alpha^2)\left(\frac{\partial u}{\partial\nu_{\Omega_t}}(t,x)\right)^2,
	\end{equation*}
	and so
	\begin{equation*}
		\begin{aligned}
			G_{\omega(t,x)}(t,x)&=\frac 12 (|\nu_{\Omega_t}(x)|^2_{A(t,x)}-\omega(t,x)^2)\left(\frac{\partial u}{\partial\nu_{\Omega_t}}(t,x)\right)^2\\
			&=\frac 12\frac{|\nu_{\Omega_t}(x)|_{A(t,x)}-\omega(t,x)}{|\nu_{\Omega_t}(x)|_{A(t,x)}+\omega(t,x)}\left[|\nu_{\Omega_t}(x)|_{A(t,x)}\frac{\partial u}{\partial\nu_{\Omega_t}}(t,x)-\dot{u}(t,x)\right]^2.
		\end{aligned}
	\end{equation*}
This implies that equations \eqref{eq:omexplicit} has to be rewritten as
\begin{equation*}
	\begin{aligned}
	\omega(t,x)&=\begin{cases}
		\displaystyle\sqrt{|\nu_{\Omega_t}(x)|_{A(t,x)}^2-\frac{2\kappa(x)}{\left(\frac{\partial u}{\partial\nu_{\Omega_t}}(t,x)\right)^2}},&\text{if }\left(|\nu_{\Omega_t}(x)|_{A(t,x)}\frac{\partial u}{\partial\nu_{\Omega_t}}(t,x)\right)^2> 2\kappa(x),\\
		0, &\text{otherwise},
	\end{cases}\\
&=\max\left\{|\nu_{\Omega_t}(x)|_{A(t,x)}\frac{\left[|\nu_{\Omega_t}(x)|_{A(t,x)}\frac{\partial u}{\partial\nu_{\Omega_t}}(t,x)-\dot{u}(t,x)\right]^2-2\kappa(x)}{\left[|\nu_{\Omega_t}(x)|_{A(t,x)}\frac{\partial u}{\partial\nu_{\Omega_t}}(t,x)-\dot{u}(t,x)\right]^2+2\kappa(x)},\ 0\right\},
\end{aligned}
\end{equation*}
and also that the first line in the local dynamic Griffith criterion becomes
\begin{equation*}
	0\le\omega(t,x)<|\nu_{\Omega_t}(x)|_{A(t,x)}.
\end{equation*}	The resulting changes in Definition~\ref{def:coupled} below are straightforward.
		\end{rem}

	\subsection{Formulation of the coupled problem}
We are now in the position to provide a proper formulation of a dynamic debonding model, by combining the wave equation \eqref{eq:u} with the local maximum dissipation principle \eqref{eq:mdp} (or, equivalently, with the local dynamic Griffith criterion \eqref{eq:Griffith}, or with \eqref{eq:omexplicit}). 
We point out that the resulting system features a strong coupling: indeed, the evolution of the domain of the wave equation is governed by \eqref{eq:omexplicit},
which in turn depends on the solution $u$ to the wave equation itself.\par 
	A solution to the dynamic debonding model is defined as follows.
	\begin{defin}\label{def:coupled}
		Given the following data:
		\begin{itemize}
			\item $\Omega_0\subseteq\R^N$ satisfying \eqref{eq:Omega0} and \eqref{setregular},
			\item $\kappa\in C^0(\R^N)$ satisfying $\kappa(x)>0$ for all $x\in \R^N$,
			\item $f\in H^1(0,T;L^2_{\rm loc}(\R^N))\cap L^2(0,T;H^1_{\rm loc}(\R^N))$,
			\item $u_0\in H^2(\Omega_0)\cap H^1_0(\Omega_0)$ and $u_1\in H^1(\Omega_0)$ satisfying 
			\begin{equation}\label{eq:compatibility}
				\begin{aligned}
				\text{either}\quad&u_1(x)=0\quad\text{and}\quad\left(\frac{\partial u_0}{\partial\nu_{\Omega_0}}(x)\right)^2\le 2\kappa(x),\\
				\text{or}\quad&u_1(x)\neq0,\quad \left(\frac{\partial u_0}{\partial\nu_{\Omega_0}}(x)\right)^2-u_1(x)^2=2\kappa(x)\quad \text{and}\quad\frac{\frac{\partial u_0}{\partial\nu_{\Omega_0}}(x)}{u_1(x)}<-1,
			\end{aligned}
			\end{equation}
		for $\mc H^{N-1}$-a.e. $x\in \partial\Omega_0$,
		\end{itemize}
	we say that an evolution $[0,T]\ni t\mapsto (u(t),\Omega_t)$ is a weak solution of the coupled problem \eqref{eq:u}\&\eqref{eq:mdp} if the following conditions are satisfied:
	\begin{enumerate}
		\item there exists a map $\Phi\colon [0,T]\times{\overline{\Omega}_0}\to \R^N$ with \lq\lq space-inverse\rq\rq{} $\Psi(t,\cdot)$ satisfying \eqref{assumptionsdiff}, \ref{H1'} and \ref{H2}, for which
		\begin{equation*}
			\Omega_t=\Phi(t,\Omega_0),\quad\text{for every }t\in [0,T];
		\end{equation*}
	\item $u$ is the weak solution to problem \eqref{eq:u} with forcing term $f$ and initial data $u_0$, $u_1$;
	\item the local maximum dissipation principle \eqref{eq:mdp} is satisfied, or equivalently the scalar normal velocity $	\omega(t,x)=\dot\Phi(t,\Psi(t,x))\cdot\nu_{\Omega_t}(x)$ fulfils one of the two (equivalent) equations in \eqref{eq:omexplicit} for a.e. $t\in [0,T]$ and for $\mc H^{N-1}$-a.e. $x\in \partial\Omega_t$.
	\end{enumerate}
	\end{defin}
	\begin{rem}
		We point out that the definition makes sense. Indeed condition (1) ensures the regularity \eqref{hypsetreg} of the family $\{\Omega_t\}_{t\in [0,T]}$. Furthermore, by simple computations one can check that the compatibility condition \eqref{eq:compatibility} is actually equivalent to 
		\begin{equation*}
			u_1+\omega(0,\cdot)\frac{\partial u_0}{\partial \nu_{\Omega_0}}=0,\quad \text{ on }\partial\Omega_0,
		\end{equation*}
	where $\omega(0,\cdot)$ is defined by \eqref{eq:omexplicit}. Hence, by Remark~\ref{rmk:bdry} one can apply Theorem~\ref{teo higher regularity u},
concluding that the wave equation \eqref{eq:u} has a unique weak solution $u$, whose regularity allows one to give a meaning to the normal derivative $\frac{\partial u}{\partial\nu_{\Omega_t}}$ at the boundary, appearing in \eqref{eq:omexplicit}. Finally, notice that \eqref{eq:omexplicit} also implies the monotonicity property \eqref{hypsetmonotone}.
	\end{rem}
	\begin{rem}
		Definition~\ref{def:coupled} can be adapted to the case of moving boundary conditions described in Subsection~\ref{sec:movingbdrycond}, with minor modifications. In this setting $\Omega_0$ also satisfies \eqref{eq:fixedbdry} (at $t=0$), and the external loading fulfils $W\equiv 0$ in a neighborhood of $(0,T)\times \Lambda^2_0$. The compatibility conditions on $U_0$ (and on $U_1$ on $\Lambda^1$) are the same of Theorem~\ref{teo:U}, while \eqref{eq:compatibility} has to be valid for $U_0$ and $U_1$ on $\Lambda^2_0$. Finally, (3) is prescribed only on $\Lambda^2_t$, while on $\Lambda^1$ there must hold $\omega(t,x)\equiv 0$.
	\end{rem}

We conclude the paper by showing how Definition~\ref{def:coupled} covers the particular cases of the 1-dimensional and radial models already analysed in \cite{DMLazNar,RivNar} and \cite{LazMolSol}, respectively (see also Sections~\ref{ex:1d} and~\ref{ex:sublevels}). In fact, in those papers the notion of solution to the coupled problem is given in a slightly different form, and the existence is obtained by exploiting d'Alembert's formula. We prove that, if the initial data are well-prepared, the solution found in the above mentioned works fulfils Definition~\ref{def:coupled}, at least for short times.

\begin{teo}\label{Thm coupled prob dim one}
	Let $\ell_0 >0$ and let $\kappa\in C^{1,1}_{\rm loc}([\ell_0,+\infty))$ satisfy $\kappa(x)>0$ for all $x\in [\ell_0,+\infty)$.
	Assume that
	$f\in C^{0,1}([0,T]\times [0,+\infty))$, $u_0\in C^{2,1}([0,\ell_0])$ and $u_1\in C^{1,1}([0,\ell_0])$ satisfy
	\begin{subequations}\label{eq:compdim1}
		\begin{equation}\label{eq:compatibility in dim one-1}
			u_0(0) = 0,\quad u_0(\ell_0) = 0 \quad\text{and}\quad u_1(0) = 0,
		\end{equation}
		and 
		\begin{equation}\label{eq:compatibility in dim one-2}
			u_1(\ell_0)\neq0,\quad {u}'_0(\ell_0)^2 - u_1(\ell_0)^2=2\kappa(\ell_0),\quad\frac{{u}'_0(\ell_0)}{u_1(\ell_0)}<-1.
		\end{equation}
	\end{subequations}
	Then, there exist $T^\ast \in (0,T]$ and a unique weak solution $t\mapsto(u(t),(0,\ell(t)))$ to the coupled problem \eqref{eq:u}\&\eqref{eq:mdp} in $[0,T^*]$ in the sense of Definition \ref{def:coupled}.
\end{teo}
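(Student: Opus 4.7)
The plan is to reduce the coupled system to an ordinary differential equation for $\ell$, exploiting the one-dimensional structure through d'Alembert's formula. The compatibility condition \eqref{eq:compatibility in dim one-2} is the initial analogue of the Griffith equality $G_\alpha = \kappa$, and it is precisely what guarantees that the resulting ODE has a well-defined right-hand side at $t = 0$ with $\dot\ell(0) \in (0,1)$.

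First, I would fix a candidate $\ell \in C^{2,1}([0,T^\ast])$ with $\ell(0) = \ell_0$ and $|\dot\ell| < 1$, and consider the diffeomorphism $\Phi(t,y) = (\ell(t)/\ell_0)\, y$ as in Example \ref{ex:1d}. By Theorem \ref{teo higher regularity u}, the wave equation \eqref{eq:u} with forcing term $f$ and initial data $u_0, u_1$ admits a unique weak solution $u$ on $\mathcal{O}$; moreover, thanks to the $C^{1,1}$ data and $C^{2,1}$ domain, the solution enjoys enough regularity that its trace on $x = \ell(t)$ is classical. Since $u_0 \in C^{2,1}([0,\ell_0])$ and $u_1 \in C^{1,1}([0,\ell_0])$, with $f \in C^{0,1}$, standard finite-speed-of-propagation arguments show that for $t$ in a neighbourhood $[0,T_0]$ of $0$ (say with $T_0 < \ell_0/2$), the values of $u$ on the strip $\{(t,x) : t \in [0,T_0],\ x \in (\ell(t) - t, \ell(t))\}$ are unaffected by the boundary condition at $x = 0$ and can be expressed via d'Alembert's formula for the half-line, reflected along $x = \ell(t)$. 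In particular, writing $u = \phi(x+t) + \psi(x-t) + R[f](t,x)$ with the right-going datum $\psi(\sigma) = \tfrac12 u_0(\sigma) - \tfrac12 \int_0^\sigma u_1$ prescribed for $\sigma \in [0,\ell_0]$, one obtains the explicit identity
\begin{equation}\label{eq:daldiff}
u_x(t,\ell(t)) - \dot u(t,\ell(t)) = u_0'(\ell(t)-t) - u_1(\ell(t)-t) + H[f](t),
\end{equation}
where $H[f]$ is a known $C^{0,1}$ functional of $f$ alone.

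Combining \eqref{eq:daldiff} with the Griffith equation \eqref{eq:omexplicitb} (and noting that by \eqref{eq:compatibility in dim one-2} the argument of the max is strictly positive at $t=0$, so by continuity the max can be dropped on a short time interval), the coupling reduces to the scalar equation
\begin{equation}\label{eq:reducedode}
\dot\ell(t) = \frac{\bigl(u_0'(\ell(t)-t) - u_1(\ell(t)-t) + H[f](t)\bigr)^2 - 2\kappa(\ell(t))}{\bigl(u_0'(\ell(t)-t) - u_1(\ell(t)-t) + H[f](t)\bigr)^2 + 2\kappa(\ell(t))}, \qquad \ell(0) = \ell_0.
\end{equation}
The right-hand side is Lipschitz in $(t,\ell)$ on a neighbourhood of $(0,\ell_0)$ thanks to $\kappa \in C^{1,1}_{\mathrm{loc}}$, $u_0 \in C^{2,1}$, $u_1 \in C^{1,1}$, $f \in C^{0,1}$, and to the denominator being bounded away from zero (since $\kappa(\ell_0) > 0$). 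Picard--Lindelöf then yields a unique $C^{1,1}$ solution $\ell$ on some maximal interval $[0,T^\ast]$, and since $\dot\ell(0) \in (0,1)$ by the compatibility condition, we may shrink $T^\ast$ so that $0 < \dot\ell(t) < 1$ throughout. A bootstrap, differentiating \eqref{eq:reducedode} once more and using the $C^{1,1}$ regularity of the right-hand side, upgrades $\ell$ to $C^{2,1}([0,T^\ast])$, as required for the application of Theorem \ref{teo higher regularity u}.

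The pair $(u,\ell)$ constructed this way satisfies all conditions of Definition \ref{def:coupled}: the diffeomorphism $\Phi$ fulfils \ref{H1'} and \ref{H2}, $u$ is the weak solution of \eqref{eq:u} on the resulting $\Omega_t = (0,\ell(t))$, and \eqref{eq:omexplicit} holds by construction (with the trivial identity $\omega(t,0) = 0$ automatically verified since $\Lambda^1 = \{0\}$ is fixed). Uniqueness follows from the uniqueness in \eqref{eq:reducedode} together with the uniqueness in Theorem \ref{teo higher regularity u}. The main technical difficulty is Step \eqref{eq:daldiff}: one must verify rigorously that the weak solution produced by Theorem \ref{teo higher regularity u} coincides on the characteristic cone with the explicit d'Alembert formula obtained by reflection across the moving boundary $x = \ell(t)$. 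This requires showing that the reflected expression is itself a weak solution of \eqref{eq:u} and then invoking uniqueness; the reflection formula $\phi(\ell(t^\ast)+t^\ast) = -\psi(\ell(t^\ast)-t^\ast)$ is well-defined and $C^{1,1}$ precisely under the subsonicity hypothesis $|\dot\ell| < 1$, which closes the argument.
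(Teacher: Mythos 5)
Your proposal follows essentially the same route as the paper: reduce the coupling to a scalar ODE for $\ell$ by observing that $u_x-\dot u$ on the moving endpoint is transported along the backward characteristic and hence depends only on $u_0'$, $u_1$ and $f$; use \eqref{eq:compatibility in dim one-2} to see that $\dot\ell(0)>0$ so the max in \eqref{eq:omexplicitb} can be dropped on a short interval; bootstrap $\ell$ to $C^{2,1}$; and then verify Definition~\ref{def:coupled} via the diffeomorphisms of Example~\ref{ex:1d}. The only structural difference is that the paper outsources the existence and uniqueness of the pair $(u,\ell)$, together with the validity of the d'Alembert representation (your acknowledged ``main technical difficulty''), to \cite[Theorem 4.6 and Remarks 4.9, 4.12]{RivNar}, whereas you reconstruct that argument and leave the identification of the weak solution with the reflected d'Alembert formula as a sketch. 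One small imprecision: the forcing contribution in the reduced ODE is $-\int_0^t f(\tau,\tau-t+\ell(t))\d\tau$, the integral of $f$ along the backward characteristic issuing from $(t,\ell(t))$, so your $H[f]$ is a function of $(t,\ell(t))$ rather than ``a functional of $f$ alone''; this is harmless for the Lipschitz estimate and the Picard--Lindel\"of step, but the right-hand side of \eqref{eq:reducedode} should be read as $F(t,\ell(t))$.
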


\begin{proof}
	By \cite[Theorem 4.6]{RivNar}, there exists a unique pair $(u,\ell)$ such that:
	\begin{enumerate}
		\item[(i)] $u$ is a weak solution to the wave equation with forcing term $f$ and initial data $u_0$ and $u_1$ in the moving domain $\bigcup_{t\in(0,T)}\{t\}\times(0,\ell(t))$;
		\item [(ii)] $\ell(0)=\ell_0$ and in a right neighborhood of $0$, the function $\ell$ is a solution of the following ODE: 
		\begin{equation}\label{eq:ell}
			\dot{\ell}(t) = \max\left\{\frac{\left[{u}'_0(\ell(t)-t)-u_1(\ell(t)-t) - \int_{0}^{t} f(\tau,\tau - t + \ell(t)) \,d\tau \right]^2-2\kappa(\ell(t))}{\left[{u}'_0(\ell(t)-t)-u_1(\ell(t)-t) - \int_{0}^{t} f(\tau,\tau - t + \ell(t)) \,d\tau\right]^2+2\kappa(\ell(t))} ,\ 0\right\}.
		\end{equation} 
	\end{enumerate}
	We observe that the equation solved by $\ell$ is the analogue of \eqref{eq:omexplicitb} in the 1-d setting (see \eqref{eq:omega1d}), by means of d'Alembert's formula. Hence, conditions $(2)$ and $(3)$ of Definition~\ref{def:coupled} are satisfied by the function $u$ and the sets $(0,\ell(t))$. To conclude, we need to check also the validity of $(1)$.\par
	To this aim, we notice that \eqref{eq:compatibility in dim one-2} implies
	\begin{equation*}
		\left[u'_0(\ell_0)-u_1(\ell_0)\right]^2>2\kappa(\ell_0).
	\end{equation*}
	As a consequence, from \eqref{eq:ell} one obtains that $\dot{\ell}(0)> 0$ and so,
	by continuity, there exists $T^\ast \in (0,T]$ such that $\ell$ solves
	\begin{equation*}
		\dot{\ell}(t) = \frac{\left[{u}'_0(\ell(t)-t)-u_1(\ell(t)-t) - \int_{0}^{t} f(\tau,\tau - t + \ell(t)) \,d\tau \right]^2-2\kappa(\ell(t))}{\left[{u}'_0(\ell(t)-t)-u_1(\ell(t)-t) - \int_{0}^{t} f(\tau,\tau - t + \ell(t)) \,d\tau\right]^2+2\kappa(\ell(t))},\quad \text{in } [0,T^\ast].
	\end{equation*}
	
	In particular, as pointed out in \cite[Remarks 4.9 and 4.12]{RivNar}, by a classical bootstrap argument the regularity assumptions on $f, u_0$ and $u_1$ and the compatibility conditions \eqref{eq:compdim1} ensure that $\ell \in C^{2,1} ([0,T^*])$. Moreover, \eqref{eq:ell} directly yields $0\le \dot{\ell}(t)<1$ for all $t\in [0,T^*]$. Hence, the construction presented in Subsection \ref{ex:1d} provides the existence of the diffeomorphisms required in $(1)$ and we conclude.
\end{proof}
	The following result deals with the radial case in dimension 2; the extension to arbitrary dimension is straightforward. In order to state it we introduce the following notation. Given a ball or an annulus $A\subseteq\R^2$ we denote by $C^{k,\alpha}_{\rm rad}(\overline A)$ the space of functions $h\in C^{k,\alpha}(\overline A)$ which are radial, meaning that there exists a function $h^{\rm rad}\colon \R\to \R$ such that $h(x)=h^{\rm rad}(|x|)$ for all $x\in \overline{A}$.
\begin{teo}\label{Thm coupled prob radial}
	Let $R>\rho_0 >0$ and let $\kappa\in C^{1,1}_{\rm rad}(\overline{B_R(0)})$ satisfy $\kappa(x)>0$ for all $x\in \overline{B_R(0)}$. Setting $\Omega_0:= \{x \in \R^2: R-\rho_0 < |x| < R \}$, assume that $f\in C^{0,1}([0,T]; C^{0,1}_{\rm rad}(\overline{B_R(0)}))$, $u_0\in C^{2,1}_{\rm rad}(\overline{\Omega_0})$ and $u_1\in C^{1,1}_{\rm rad}(\overline{\Omega_0})$ satisfy
	\begin{subequations}
	\begin{equation*}\label{eq:compatibility in dim two-1}
		\begin{split}
		&u_0(x) = 0\quad \text{ if } |x| = R \text{ or } |x|=R-\rho_0,
		\\
		&u_1(x) = 0 \quad\text{ if } |x| = R,
		\end{split}
	\end{equation*}
	and 
	\begin{equation*}\label{eq:compatibility in dim two-2}
		u_1(x)\neq0,\quad \left(\frac{\partial u_0}{\partial\nu_{\Omega_0}}(x)\right)^2-u_1(x)^2=2\kappa(x)\quad \text{and}\quad\frac{\frac{\partial u_0}{\partial\nu_{\Omega_0}}(x)}{u_1(x)}<-1\quad \text{ if } |x|= R-\rho_0.
	\end{equation*}
\end{subequations}
	Then, there exist $T^\ast \in (0,T]$ and a unique weak solution $t\mapsto(u(t),\Omega_t)$ to the coupled problem \eqref{eq:u}\&\eqref{eq:mdp} in $[0,T^*]$ in the sense of Definition \ref{def:coupled}, where 
	\begin{equation*}
	\Omega_t:= \{x \in \R^2: R-\rho(t) < |x| < R \},
	\end{equation*}
	for a suitable $\rho\in C^{2,1}([0,T^*])$.
\end{teo}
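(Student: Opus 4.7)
The plan is to mirror the strategy of Theorem~\ref{Thm coupled prob dim one}, replacing the one-dimensional existence result from \cite{RivNar} with its radial counterpart from \cite{LazMolSol}. Exploiting the rotational symmetry of the initial data, of the forcing term and of the toughness, the wave equation on the annulus $\Omega_t$ reduces---via $u(t,x) = u^{\rm rad}(t, R-|x|)$---to a one-dimensional problem with an additional lower-order term coming from the radial part of the Laplacian (the term $\frac{1}{r}u^{\rm rad}_r$), which is harmless because $\rho(t) < R$ keeps the domain uniformly away from the origin. Under the stated regularity and compatibility hypotheses, \cite{LazMolSol} then provides a unique pair $(u,\rho)$ such that $u$ is a weak solution of the wave equation with data $u_0,u_1,f$ on $\bigcup_{t\in(0,T)}\{t\}\times\{R-\rho(t)<|x|<R\}$, $\rho(0)=\rho_0$, and in a right neighborhood of $0$ the function $\rho$ satisfies an explicit ODE which, via \eqref{eq:omegarad} and the radial form of \eqref{eq:equivDEER}, is precisely the radial instance of \eqref{eq:omexplicitb} on the inner boundary $\{|x|=R-\rho(t)\}$ (the outer boundary being fixed, consistently with $\omega(t,x)=0$ there). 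This automatically ensures conditions (2) and (3) of Definition~\ref{def:coupled}.

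Next, I would use the compatibility condition at $t=0$ to show that $\dot\rho(0)>0$: since $u_1\neq0$ and $(\partial u_0/\partial\nu_{\Omega_0})/u_1<-1$ on $\{|x|=R-\rho_0\}$, the identity $(\partial u_0/\partial\nu_{\Omega_0})^2-u_1^2=2\kappa$ combined with these inequalities yields $[\partial u_0/\partial\nu_{\Omega_0}-u_1]^2>2\kappa$ there, so the argument of the maximum defining $\dot\rho(0)$ is strictly positive. By continuity there exists $T^*\in(0,T]$ such that, on $[0,T^*]$, the evolution remains in the nontrivial branch of the maximum, i.e.\ $\rho$ solves a classical ODE with smooth right-hand side. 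A standard bootstrap---using $\kappa\in C^{1,1}_{\rm rad}$, $u_0\in C^{2,1}_{\rm rad}$, $u_1\in C^{1,1}_{\rm rad}$ and $f\in C^{0,1}([0,T];C^{0,1}_{\rm rad})$, together with the explicit d'Alembert-type representation of $u^{\rm rad}$ along characteristics---then upgrades the regularity of $\rho$ to $C^{2,1}([0,T^*])$ and, shrinking $T^*$ if needed, gives $0\le\dot\rho(t)<1$ throughout.

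With such a regular, nondecreasing $\rho$ at disposal, I would finally invoke the construction of Section~\ref{ex:sublevels} with $g(x)=|x|$, which produces diffeomorphisms $\Phi,\Psi$ satisfying \eqref{assumptionsdiff}, \ref{H1'} and \ref{H2}: indeed condition \eqref{eq:addi} reduces here to $\dot\rho(t)<|\nabla g|=1$, already ensured, and the resulting sets coincide with $\Omega_t=\{R-\rho(t)<|x|<R\}$. This provides condition (1) of Definition~\ref{def:coupled}, and uniqueness of the pair $(u,\Omega_t)$ inherits from the uniqueness of $(u,\rho)$ in \cite{LazMolSol}. The main technical obstacle, as in the one-dimensional case, is the careful identification step between the ODE produced by \cite{LazMolSol} from the radial reduction and the intrinsic equation \eqref{eq:omexplicitb} at the moving inner boundary: this requires translating the d'Alembert-type representation of $u^{\rm rad}$ back into the normal derivative $\partial u/\partial\nu_{\Omega_t}$ and the trace of $\dot u$ at $|x|=R-\rho(t)$, via the chain rule and \eqref{eq:omegarad}, so that the quantity inside the maximum in \eqref{eq:omexplicitb} matches verbatim the one obtained in \cite{LazMolSol}.
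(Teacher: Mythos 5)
Your proposal is correct and follows essentially the same route as the paper's proof, which simply declares the argument analogous to Theorem~\ref{Thm coupled prob dim one}, invokes \cite[Theorem 3.6]{LazMolSol} for the existence of $\rho$ and \cite[Remarks 3.7 and 3.8]{LazMolSol} for its $C^{2,1}$ regularity, and uses the construction of Subsection~\ref{ex:sublevels} together with \eqref{eq:omegarad} to verify condition (1) of Definition~\ref{def:coupled}. Your write-up merely makes explicit the steps (positivity of $\dot\rho(0)$ from the compatibility conditions, the identification of the ODE with \eqref{eq:omexplicitb}, the bootstrap) that the paper leaves to the cited references.
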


\begin{proof}
	The proof is analogous to the one of Theorem \ref{Thm coupled prob dim one}, taking into account Subsection~\ref{ex:sublevels} and \eqref{eq:omegarad}. Here, the existence of the function $\rho$ is guaranteed by \cite[Theorem 3.6]{LazMolSol}. For the regularity of $\rho$ we instead refer to \cite[Remarks 3.7 and 3.8]{LazMolSol}. 
\end{proof}

	We finally stress once again that the well posedness of Definition~\ref{def:coupled} in the general case seems to be a difficult task, due to the strong coupling between the wave equation and the rule \eqref{eq:omexplicit} governing the evolution of the domains. We leave the problem open for future research.

	\bigskip
	
	\noindent\textbf{Acknowledgements.}
The authors would like to thank Maicol Caponi for fruitful discussions.
This work is part of the Project {\em Variational methods for stationary and evolution problems with singularities and interfaces} (PRIN 2017) funded by the Italian Ministry of Education, University, and Research.
The authors have been supported by the {\em Gruppo Nazionale per l'Analisi Matematica la Probabilità e le loro Applicazioni} (GNAMPA) of the {\em Istituto Nazionale di Alta Matematica} (INdAM). 
	
	\bigskip
	
	\bibliographystyle{siam}

\end{document}